\newtheorem{theorem}{Theorem}[section]
\numberwithin{equation}{section}
\newtheorem{prop}[theorem]{Proposition}
\newtheorem{cor}[theorem]{Corollary}
\theoremstyle{remark}
\newtheorem{rmk}[theorem]{Remark}
\newcommand{\mb}{\mathbb}
\newcommand{\ml}{\mathcal}
\newcommand{\mbf}{\mathbf}
\newcommand{\mr}{\mathrm}
\newcommand{\Sf}{\mathbf{S}}
\newcommand{\A}{B} 
\newcommand{\D}{\delta} 
\newcommand{\C}{C} 
\newcounter{elno}
\def\P{\mathbb{P}}
\def\Z{\mathbb{Z}}                   
\def\Q{\mathbb{Q}}                   
\def\C{\mathbb{C}}                   
\def\N{\mathbb{N}}                   
\def\uhp{{\mathbb H}}                
\def\dR{{\rm dR}}                    
\def\Sp{{\rm Sp}}                    
\def\Gm{\mathbb{G}_m}                 
\def\tr{{{\mathsf t}{\mathsf r}}}                 
\def\spec{{\rm Spec}}            
\newcommand{\mat}[4]{
     \begin{pmatrix}
            #1 & #2 \\
            #3 & #4
       \end{pmatrix}
    }
\def\tmap{{\sf t}}
\def\diag{{\rm diag}}
\newcommand{\Ra}{{\sf R}}
\newcommand{\Tf}{{\sf T}}
\newcommand{\BG}{{\sf G}}
\begin{document}
\author{Jin Cao, Hossein Movasati and Shing-Tung Yau}
\title{Gauss-Manin connection in disguise: Genus two curves}

\address{
Jin Cao\\
Yau Mathematical Sciences Center,
Tsinghua University, Beijing, China
}
\email{caojin@mail.tsinghua.edu.cn}

\address{Hossein Movasati\\
Instituto de Matem\'atica Pura e Aplicada, IMPA, Estrada Dona Castorina, 110, 22460-320, Rio de Janeiro, RJ, Brazil.
}
\email{hossein@impa.br}

\address{Shing-Tung Yau \\
Mathematics Department, Harvard University, 1 Oxford Street, Cambridge MA, 02138 USA.
}
\email{yau@math.harvard.edu}


\begin{abstract}
We describe an algebra of meromorphic  functions on the Siegel domain 
of genus two which contains Siegel modular forms for an arithmetic index six  
subgroup  of the symplectic group  and it is closed under three 
canonical derivations  of the Siegel domain. The main ingredients of our study 
are the moduli of enhanced genus two curves, Gauss-Manin connection and the modular vector
fields living on such moduli spaces.
\end{abstract}
\maketitle
\pagestyle{myheadings}


\section{Introduction}
Igusa in \cite{igusa67}  computes $5$ explicit  generators $E_4,E_6,\chi_{10},\chi_{12},\chi_{35}$ 
of the ring  of Siegel modular forms of genus $2$ for $\Sp(4,\Z)$. The first $4$ are given by 
Eisenstein series and they generate Siegel modular forms of even weight. 
The first example of a differential equation for a Siegel modular form is due to Resnikoff in \cite{Resnikoff1970-1, Resnikoff1970-2}.
He computes in 
\cite[page 496]{Resnikoff1970-1} an eight order differential equation for $E_4$ (in his notation $\psi_4$)  and expresses the  difficulty to find differential equations for $E_6$. 
Bertrand and Zudulin in  \cite{BertrandZudilin2003} show that 
the transcendental degree of the field generated by Siegel modular forms of genus 
$g$  and their derivations is $2g^2+g$. They further describe many differential equations involving theta constants, see also \cite[Theorem 1]{Zudilin2000}. 

We are motivated by the fact that in all these works there is a moduli space equipped with canonical vector
fields which is responsible for all the involved computations. In \cite{ho06-2} it is remarked that the 
Ramanujan's  differential equation between Eisenstein series can be interpreted as a vector field on
the moduli of elliptic curves enhanced with a suitable frame of the first cohomology bundle, see also 
\cite{ho14} for further details.
Such a moduli space in the case of Calabi-Yau varieties is worked out in \cite{ho22, HosseinMurad, ho14} 
and it is 
the  building block of the new theory of  Calabi-Yau modular forms. This is mainly inspired by
many computations in string theory and in particular Yamaguchi and Yau's  polynomials description in 
 \cite{yam04} of generating
functions of genus $g$ Gromov-Witten invariants, see also \cite{alim11} and the references therein.
The case of principally polarized abelian varieties was initiated in \cite{ho18} and the construction
of Ramanujan-type vector fields is done in \cite{ho2020} for a complex moduli framework  and in 
\cite{2019Fonseca} for a moduli stack framework. 
In this paper we work out such a moduli space for 
genus two curves with a marked Weierstrass point and the outcome of our approach 
in terms of Siegel modular forms is the following.

Let $\Gamma$ be the subgroup  of $\Sp(4,\Z)$ generated by four matrices:
\begin{equation}
\label{14.02.2109}
\left(\begin{matrix}
1 & 0 & 0 & 0 \\
0 & 1 & 0 & 0 \\
1 & 0 & 1 & 0 \\
0 & 0 & 0 & 1 
\end{matrix}\right),
\left(\begin{matrix}
1 & 0 & -1 & 1 \\
0 & 1 & 1 & -1 \\
0 & 0 & 1 & 0 \\
0 & 0 & 0 & 1 
\end{matrix}\right), 
\left(\begin{matrix}
1 & 0 & 0 & 0 \\
0 & 1 & 0 & 0 \\
0 & 0 & 1 & 0 \\
0 &1 & 0 & 1 
\end{matrix}\right),
\left(\begin{matrix}
1 & 0 & 0 & 0 \\
0 & 1 & 0 & -1 \\
0 & 0 & 1 & 0 \\
0 & 0 & 0 & 1 
\end{matrix}\right),
\end{equation}
which turns out to be of index $6$, see \S\ref{mono}, 
and $\uhp_2$ be the Siegel upper half plane of genus $2$. We denote an element of $\uhp_2$ by 
$\tau=\begin{pmatrix}\tau_1 &\tau_3\\ \tau_3 &\tau_2\end{pmatrix}$.  
\begin{theorem}
\label{maintheo}
There are meromorphic functions $X_i, \ i=1,2,\ldots, 153$ on $\uhp_2$ with possible poles only along  $\tau_3=0$ such that 
\begin{enumerate}
\item
\label{mt1}
$5$ of $X_i$'s are meromorphic Siegel modular forms  of weights $1,2,3,4,5$ for $\Gamma$.  These are denoted by $X_i=T_{4i},\ i=1,2,3,4,5$,  and we have a quadratic relation of the form $T_{16}^2=T_{20}T_{12}$.
The Siegel modular form $T_{20}$ is holomorphic and  
vanishes only in $\tau_3=0$.
\item
\label{mt2}
The ideal $I$ of all polynomial relations between $X_i$'s is defined over $\Q$ (by the first item we have 
$X_{4}^2-X_{5}X_{3}\in I$).
\item
\label{mt3}
The affine variety $\spec(\Q[X]/I)$ is isomorphic to an open subset of 
the weighted projective space $\P^{6,8,10,3,3,3,3,1,1,1,1}$. Its complement is the  zero set of 
a degree $4$ homogeneous polynomial. 
\item
\label{mt4}
The derivation $\frac{\partial X_i}{\partial \tau_k},\ \ i=1,2,\ldots, k=1,2,3$ multiplied with the 
Siegel modular form $X_5$ are polynomials  
in $X$ with $\Q$ coefficients.
\end{enumerate}
\end{theorem}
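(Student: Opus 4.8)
\medskip
\noindent\textbf{Proof proposal.}
The plan is to obtain the $X_i$ as the pullbacks of a fixed generating set of the coordinate ring of a moduli space $\Tf$ of genus two curves enhanced with a marked Weierstrass point and an adapted symplectic frame of the first algebraic de Rham cohomology, along a canonical section of the associated period map, and then to deduce \eqref{mt1}--\eqref{mt4} from the algebraic geometry of $\Tf$ together with an explicit computation of its Gauss--Manin connection. I would start from the affine family $C_t\colon y^2=x^5+t_1x^3+t_2x^2+t_3x+t_4$ with the point at infinity as the marked Weierstrass point, describing $H^1_{\dR}(C_t)$ by the algebraic differentials $\omega_i=x^{i-1}\tfrac{dx}{y}$, $i=1,\dots,4$, with $\omega_1,\omega_2$ spanning $F^1$. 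Let $\Tf$ be the variety of pairs $(C_t,\alpha_\bullet)$, where $\alpha_\bullet=(\alpha_1,\dots,\alpha_4)$ is a basis of $H^1_{\dR}(C_t)$ with $\alpha_1,\alpha_2\in F^1$ whose cup product matrix is a fixed standard symplectic matrix; one lets $\Tf$ include the boundary locus $D$ along which the curve degenerates so that its Jacobian becomes decomposable. Since the cup product, the Hodge filtration and the family are all defined over $\Q$, the scheme $\Tf$ is smooth and affine over $\Q$, it carries an action of the algebraic group $\BG\subset\Sp(4)$ of frame changes preserving $F^1$ (the Siegel parabolic), and $\Tf/\BG$ is the moduli of genus two curves with a marked Weierstrass point. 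As $\mathcal O(\Tf)$ is a finitely generated $\Q$-algebra, one fixes a generating system $X_1,\dots,X_{153}$; then $I=\ker\!\big(\Q[X_1,\dots,X_{153}]\twoheadrightarrow\mathcal O(\Tf)\big)$ is a $\Q$-ideal, which is item \eqref{mt2}.

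\medskip
\noindent\textbf{Step 2: the period map and the functions $X_i$.}
Integrating the tautological frame over a symplectic basis of $H_1(C_t,\Z)$ gives a period matrix which, by the normalization above and the Riemann bilinear relations, has the block shape $\left(\begin{smallmatrix}I_2&\tau\\ \ast&\ast\end{smallmatrix}\right)$ with $\tau\in\uhp_2$ and lower blocks affine-linear in $\tau$; this yields an analytic map $\mathsf{pr}\colon\Tf^{\mathrm{an}}\to\uhp_2$ whose fibres are the $\BG$-orbits. Since, by Torelli, Jacobians of smooth genus two curves are exactly the indecomposable principally polarized abelian surfaces, $\mathsf{pr}$ carries $\Tf^{\mathrm{an}}\setminus D$ onto $\uhp_2\setminus\{\tau_3=0\}$ and $D$ into $\{\tau_3=0\}$, and the monodromy acting on the frame has image the group $\Gamma$ of \eqref{14.02.2109}, shown in \S\ref{mono} to be of index six. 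Let $s\colon\uhp_2\setminus\{\tau_3=0\}\to\Tf^{\mathrm{an}}\setminus D$ be the single-valued, $\Gamma$-equivariant section of $\mathsf{pr}$ determined by the above normalization of the period matrix, and set $X_i:=X_i\circ s$, a meromorphic function on $\uhp_2$. Because $s$ avoids $D$, the only possible poles of these $X_i$ come from the behaviour of $s$ as $\tau_3\to0$, so they have poles only along $\tau_3=0$, and $X_5=T_{20}$ below will vanish precisely there.

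\medskip
\noindent\textbf{Step 3: Siegel modular forms and the weighted projective model, items \eqref{mt1} and \eqref{mt3}.}
A $\BG$-semi-invariant function in $\mathcal O(\Tf)$ of weight $k$ descends along $\mathsf{pr}$ to a meromorphic Siegel modular form of weight $k$ for $\Gamma$; among the $X_i$ one isolates five such functions $T_4,T_8,T_{12},T_{16},T_{20}$ of weights $1,2,3,4,5$. The quadratic identity $T_{16}^2=T_{20}T_{12}$ is a relation in the graded ring of these semi-invariants, which by construction is controlled by the classical invariant theory of binary quintics --- equivalently by the rank two Hodge bundle $F^1$ and its symmetric powers --- in the spirit of Igusa's computation; and $T_{20}$ is holomorphic and vanishes exactly along the decomposable locus $\tau_3=0$, being (up to a scalar) the weight $5$ form that arises on the smaller group $\Gamma$ as a square root of the weight $10$ Siegel cusp form cutting out that locus. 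For item \eqref{mt3}, the one-parameter subgroup $\Gm\subset\BG$ rescaling the holomorphic part of the frame induces a grading of $\mathcal O(\Tf)$; taking eleven weighted homogeneous generators of weights $6,8,10,3,3,3,3,1,1,1,1$ and comparing the associated graded rings identifies $\spec(\Q[X]/I)$ with an open subset of $\P^{6,8,10,3,3,3,3,1,1,1,1}$ whose complement is cut out by a single weighted homogeneous polynomial of degree four.

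\medskip
\noindent\textbf{Step 4: the modular vector fields, item \eqref{mt4}.}
Compute the Gauss--Manin connection $\nabla$ of the universal curve over $\Tf$ in the basis $\omega_i=x^{i-1}\tfrac{dx}{y}$ by the Griffiths--Dwork reduction of pole order, and conjugate it into the tautological frame, which trivializes the cohomology bundle over $\Tf$: by Griffiths transversality and compatibility with the cup product this gives an algebraic, $\mathfrak{sp}(4)$-valued connection matrix $\mathsf{A}$, defined over $\Q$. On the smooth locus $\Tf\setminus D$ the induced map $T\Tf\to\mathfrak{sp}(4)$ is an isomorphism --- on directions tangent to the $\BG$-orbits it is the infinitesimal $\BG$-action, injective since $\BG$ acts freely, and modulo $\mathrm{Lie}\,\BG$ it is the Kodaira--Spencer isomorphism. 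Pulling $\mathsf{A}$ back along $s$ (where $\nabla$ is trivialized and the tautological frame becomes the rows of $\left(\begin{smallmatrix}I_2&\tau\\ \ast&\ast\end{smallmatrix}\right)$) one finds that $\mathsf{A}(s_*\,\partial/\partial\tau_k)$ is a \emph{constant} element $Y_k\in\mathfrak{sp}(4)$; one then defines the modular vector field $\Ra_k$ to be the algebraic vector field on $\Tf$ with $\mathsf{A}(\Ra_k)=Y_k$, so that $\Ra_k\circ s=s_*\,\partial/\partial\tau_k$ by uniqueness. Solving $\mathsf{A}(\Ra_k)=Y_k$ for the components of $\Ra_k$ by Cramer's rule puts $\det\mathsf{A}$ in the denominator, so that $\partial X_i/\partial\tau_k=(\Ra_kX_i)\circ s$ is a $\Q$-polynomial in the $X_j$ divided by $\det\mathsf{A}$; it then remains to show that $\det\mathsf{A}$ equals, up to a unit of $\mathcal O(\Tf)$, exactly $X_5=T_{20}$ --- i.e., that $T\Tf\to\mathfrak{sp}(4)$ degenerates exactly, and only to first order, along $D$ --- which gives $X_5\,\partial X_i/\partial\tau_k\in\Q[X_1,\dots,X_{153}]$, item \eqref{mt4}. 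The hard part of the whole argument is precisely this last computation: making the Gauss--Manin connection completely explicit in the tautological frame (the Griffiths--Dwork reduction for a genus two curve in Weierstrass form, followed by the base change to the standard symplectic frame, is already substantial) and identifying the resulting Jacobian $\det\mathsf{A}$ with a unit multiple of $T_{20}$ and nothing larger; by contrast, the exact weighted projective presentation in \eqref{mt3} and the index six computation of \S\ref{mono} are long but essentially routine.
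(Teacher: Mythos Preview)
Your overall architecture matches the paper's: construct the enhanced moduli $\Tf$, pull back a generating set of $\ml{O}(\Tf)$ along the canonical section of the period map (the paper's $\tmap$-map), and realize $\partial/\partial\tau_k$ as modular vector fields $\Ra_k$ on $\Tf$. Items \eqref{mt2} and \eqref{mt3} then follow from the explicit presentation of $\Tf$, essentially as you indicate.

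There is, however, a genuine misidentification in Step~4. You take $D$ to be the decomposable-Jacobian (nodal) locus and then assert that $\det\mathsf A$ is a unit times $T_{20}$, i.e.\ that $T\Tf\to\mathfrak{sp}(4)$ degenerates precisely along $D$. But in the paper's coordinates $T_{20}=t_5^2/\delta^5$ vanishes on $\{t_5=0\}$, which is \emph{not} the nodal locus $\{\Delta=0\}$; and the explicit solution of the linear system \eqref{gamma-matrix} shows that the $\Ra_k$ acquire a simple pole along $t_5=0$ and none along $\Delta=0$ (indeed $\tfrac{d\Delta}{\Delta}(\Ra_k)$ has only $t_5$ in its denominator, so the $\Ra_k$ are tangent to $\Delta=0$). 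Your Cramer's-rule heuristic is therefore pointing at the wrong divisor, and the identification $\det\mathsf A\sim T_{20}$ cannot be extracted from a Kodaira--Spencer argument on the complement of the nodal locus. The paper sidesteps this entirely by direct computation: it writes the $\Ra_{k,m}$ out, observes the uniform $\tfrac{1}{t_5}$ factor (and the absence of $\tfrac{1}{\Delta}$), and concludes that on the affine chart $U=\{\delta\neq 0\}=\spec(\Q[X]/I)$ the only pole of $\Ra_k X_i$ is of order one in $T_{20}$. Relatedly, your claim that $T_{20}$ is a square root of Igusa's $\chi_{10}$ is false: $\chi_{10}$ is, up to a constant, the discriminant $\Delta$, not $t_5^2$; the holomorphy and vanishing of $T_{20}$ along $\tau_3=0$ are argued in the paper indirectly, from the nonexistence of holomorphic Siegel modular forms of negative weight.

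A smaller correction to Step~3: the relation $T_{16}^2=T_{12}T_{20}$ has nothing to do with the invariant theory of binary quintics. It is the elementary identity $(t_3t_5)^2=(t_3^2)(t_5^2)$, forced by the residual $\Z/2$-quotient in Proposition~\ref{moduli sp for Siegel}: the automorphism $(t_3,t_5)\mapsto(-t_3,-t_5)$ means the ring $\ml{O}(\Sf)$ is generated over $\delta^{-1}$ by $t_2,t_4,t_3^2,t_5^2,t_3t_5$, with that single obvious relation.
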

The proof of Theorem \ref{maintheo}, and also the structure of the paper is as follows.
In \S\ref{s1} we first use some classical statements on the moduli of genus two curves in order
to construct the moduli of genus two curves endowed with a Weierstrass point. Then using a well-known basis of de Rham cohomologies, we compute
the Gauss-Manin connection over such a moduli. In \S\ref{CP} we use the hypercohomology definition of de Rham cohomology 
and compute the cup product in de Rham cohomology. The content of these two sections are needed
in order to construct the moduli space $\Tf$ of genus two curves enhanced with a basis of de 
Rham cohomologies with some compatibility conditions. 
This is done in \S\ref{s3}. In \S\ref{s4} using the Gauss-Manin connection matrix
on $\Tf$, we construct three vector fields
$\Ra_k,\ \ k=1,2,3$ in $\Tf$ which will be eventually interpreted as derivations $\frac{\partial}{\partial \tau_k},\  k=1,2,3$ in \S\ref{s5}. The bridge between regular functions in $\Tf$, and  
meromorphic functions $X_i$ in the Siegel domain is the $\tmap$-map which is explained in \S\ref{s5}. 
Our functions 
$X_i$ have functional equations with respect to the underlying monodromy group  and  in 
\S\ref{s6} we compute such a group explicitly. This turns out to be the group $\Gamma$. We would like to highlight that instead of genus two curves we could start with the Clingher-Doran  family of K3 surfaces in \cite{do10-1}, as some of the ingredients of our work has been worked out in \cite{GMCD-K3}. The computations in this case become heavier and this will be exploited in a future work.  

The main body of the  paper was written during the visit of the first two authors in
CMSA, Harvard university. They would like to thank the institute for financial support and 
its lovely and stimulating ambient. The last section was worked out during the first two authors's stays at the Issac Newton Institute for Mathematical Sciences, Cambridge university. They are very grateful to the institution for the hospitality and financial support.

\section{A geometric set up}
\label{s1}
\subsection{Hyperelliptic curves}
Fix a positive integer $d$. 
Given $t = (t_1, \cdots, t_d) \in V:=\mb{C}^d$, we let $L_t$ be the affine curve defined by 
$y^2 = f(x)$, where 
\begin{equation} \label{P}
f(x) = x^d+\sum_{k=1}^d 
t_{k} x^{d - k}.
\end{equation}
We set: $\deg x = 2, \deg y = d$ and in this way $P$ is a tame polynomial in the sense of \cite[Chapter 10]{ho13}.
Let $\alpha_i,\ \ i = 1 \cdots d$ be the formal roots of $f(x)$, i.e., 
$f(x) = \prod_{i = 1}^d(x -\alpha_i)$, and then we define the discriminant $\Delta$ of $f(x)$ as
\begin{equation}
\Delta = \prod_{1 \leq i < j \leq d} (\alpha_i - \alpha_j)^2.
\end{equation}
Because of the fundamental theorem of symmetric polynomials, it is known that 
$\Delta \in \Q[t]$.
Hence, if $t \in V-\{ \Delta = 0\}$ then $L_t$ is a smooth curve.

From now on assume that $d= 2g+1$ is odd. 
For $t \in V$, we let $Y_t$ be the completion of $L_t$ which is obtained by adding a point at infinity 
$\infty:=[0:1:0]\in\P^2$ to $L_t$.  The projective curve $Y_t$ is smooth of genus $g$ 
which is a hyperelliptic curve. 
It is known that  $H^1_{dR}(L_t)\cong H^1_{dR}(Y_t)$ has a basis consisting of
\begin{equation}\label{de Rham basis1}
\left[\frac{\mr{d}x}{y}\right], \left[\frac{x\mr{d}x}{y}\right], \cdots, \left[\frac{x^{2g-1}\mr{d}x}{y}\right],
\end{equation}
from which the first $g$ elements form a basis of $H^0(Y_t, \Omega^1_{Y_t})$.

Assume that $d$ is even, say $2g+2$.  The completion of (\ref{P}) also gives us a hyperelliptic curve $Y_t$. However, $L_t$ is corresponding to an open subvariety of $Y_t$ by removing two points. In this case,  $H^1_{dR}(L_t)$ also has a basis consisting of
\begin{equation}
\left[\frac{\mr{d}x}{y}\right], \left[\frac{x\mr{d}x}{y}\right], \cdots, \left[\frac{x^{2g}\mr{d}x}{y}\right].
\end{equation}
The first $g$ elements are holomorphic, but the rest may have non-vanishing residues. In order to get a basis of $H^1_{dR}(Y_t)$, we need to find $2g$ elements in $H^1_{dR}(L_t)$ without residues. This basis can be found after choosing a coordinate function around the two points at infinity,  \S\ref{3oct2019}. As an example for
 \[
 y^2 = f(x) = x^6 + t_2x^4 + t_3x^3 + t_4x^2 + t_5x + t_6.
 \]
we have
 \[
 \mr{\rm Res}_{\infty} \frac{x^2\mr{d}x}{y} = -1, \ \ \mr{\rm Res}_{\infty}\frac{x^3\mr{d}x}{y} = 0,\ \  \mr{\rm Res}_{\infty} \frac{x^4\mr{d}x}{y} = \frac{t_2}{2},
 \]
 where $\infty$ is one of the two points at infinity, 
 and so, we may choose the following  basis of $H_{dR}^{1}(Y_t)$:
 \[
 \frac{\mr{d}x}{y}, \ \ \frac{x\mr{d}x}{y},\ \  \frac{x^3 \mr{d}x}{y},\ \  \frac{t_2}{2}  \frac{x^2\mr{d}x}{y} + \frac{x^4 \mr{d}x}{y}.
 \]

\subsection{Gauss-Manin connections}
We consider the family of hyperelliptic curves $Y_t$ over $V$ and let $H^1_{dR}(Y/V)$ be the first 
cohomology bundle, that is, its fiber over $t\in V$ is $H^1_{dR}(Y_t)$.  By abuse of notation, we use $\omega_i$ to denote the global section of $H^1_{dR}(Y/V)$, whose value at $t$ is the $[\frac{x^{i}\mr{d}x}{y}]$ of $H^1_{dR}(Y_t)$. After fixing these sections, the Gauss-Manin connection 
\[
\nabla: H^1_{dR}(Y/V) \to \Omega^1_V \otimes_{\ml{O}_V} H^1_{dR}(Y/V)
\]
can be expressed as: 
\begin{equation}  
\nabla \left(\begin{matrix}
\omega_1 \\
\omega_2 \\
\cdots  \\
\omega_{2g}
\end{matrix}\right) = \left(\begin{matrix}
b_{1,1} & b_{1,2} & \cdots & b_{1, 2g} \\
b_{2,1} & b_{2,2} & \cdots & b_{2, 2g} \\
\cdots \\
b_{2g, 1} &b_{2g,2} &\cdots & b_{2g, 2g} 
\end{matrix}\right) \otimes \left(\begin{matrix}
\omega_1 \\
\omega_2 \\
\cdots  \\
\omega_{2g}
\end{matrix}\right),
\end{equation}
where $b_{i, j}$ are meromorphic differential $1$-forms in $t_1, \cdots, t_d$ with pole order one along $\Delta=0$. 
If we denote the matrix in the right hand side by $\A$, then we may write $\A$ as
\begin{equation}
\A = \A_1 \mr{d}t_1 + \cdots + \A_d \mr{d}t_d,
\end{equation}
where $\A_i$ are $2g \times 2g$ matrices with entries lying in 
$\frac{1}{\Delta} \mb{Q}[t_1, t_2, \cdots, t_d]$.
For simplicity, we decompose the Gauss-Manin connection matrix $B$ as 
\begin{equation}
\left(\begin{matrix}
B_{1} & B_{2} \\
B_{3} & B_{4}
\end{matrix}\right).
\end{equation}
where $B_i$'s are $g\times g$ matrices. We also use the notation 
$B_ i = \sum^d_{m = 1}  B_{m,i}\cdot \mr{d}t_m$, where $B_{m, i}$ are $g \times g$-matrices.
In general the expressions of Gauss-Manin matrices are huge. However, it turns out that 
the inverses of Gauss-Manin matrices are much simpler. Below is the data of the Gauss-Manin connection for 
a family of hyperelliptic  curves of genus $2$ given by 
\begin{equation}
\label{quintic}
Y_t: \ \ y^2= x^5 + t_2x^3+t_3x^2 + t_4x + t_5.
\end{equation}
These are computed by the procedure {\tt gaussmaninmatrix} of the library 
{\tt foliation.lib}
\footnote{\tt http://w3.impa.br/$\sim$hossein/foliation-allversions/foliation.lib }
of {\sc Singular}, \cite{GPS01}, and the relevant algorithms are
explained in \cite[Chapter 4]{ho06-1}, see also \cite[Chapter 12]{ho13}. 

\begin{eqnarray}
\label{23.02.2019} & & \\ \nonumber
5^5 \cdot \Delta &=&108t_{2}^{5}t_{5}^{2}-72t_{2}^{4}t_{3}t_{4}t_{5}+16t_{2}^{4}t_{4}^{3}+16t_{2}^{3}t_{3}^{3}t_{5}-4t_{2}^{3}t_{3}^{2}t_{4}^{2}-900t_{2}^{3}t_{4}t_{5}^{2}+825t_{2}^{2}t_{3}^{2}t_{5}^{2}  \\ \nonumber  & &
+560t_{2}^{2}t_{3}t_{4}^{2}t_{5}-128t_{2}^{2}t_{4}^{4}-630t_{2}t_{3}^{3}t_{4}t_{5}+144t_{2}t_{3}^{2}t_{4}^{3}-3750t_{2}t_{3}t_{5}^{3}+2000t_{2}t_{4}^{2}t_{5}^{2} \\ \nonumber  & & +108t_{3}^{5}t_{5}-27t_{3}^{4}t_{4}^{2} +2250t_{3}^{2}t_{4}t_{5}^{2}-1600t_{3}t_{4}^{3}t_{5}+256t_{4}^{5}+3125t_{5}^{4},
\end{eqnarray}

$$
B^{-1}_2 =\left(
\begin{array}{cc}
-\frac{72t_{2}^{2}t_{5}^{2}-56t_{2}t_{3}t_{4}t_{5}+12t_{2}t_{4}^{3}+16t_{3}^{3}t_{5}-4t_{3}^{2}t_{4}^{2}}{24t_{2}t_{5}^{2}-12t_{3}t_{4}t_{5}+3t_{4}^{3}} & \frac{6t_{2}^{2}t_{4}t_{5}-8t_{2}t_{3}^{2}t_{5}+2t_{2}t_{3}t_{4}^{2}+80t_{3}t_{5}^{2}-30t_{4}^{2}t_{5}}{24t_{2}t_{5}^{2}-12t_{3}t_{4}t_{5}+3t_{4}^{3}}  \\
-\frac{24t_{2}t_{3}t_{5}^{2}-2t_{2}t_{4}^{2}t_{5}-8t_{3}^{2}t_{4}t_{5}+2t_{3}t_{4}^{3}}{24t_{2}t_{5}^{2}-12t_{3}t_{4}t_{5}+3t_{4}^{3}} & -\frac{36t_{2}^{2}t_{5}^{2}-16t_{2}t_{3}t_{4}t_{5}+4t_{2}t_{4}^{3}-20t_{4}t_{5}^{2}}{24t_{2}t_{5}^{2}-12t_{3}t_{4}t_{5}+3t_{4}^{3}}  \\
\frac{152t_{2}t_{4}t_{5}^{2}+80t_{3}^{2}t_{5}^{2}-116t_{3}t_{4}^{2}t_{5}+24t_{4}^{4}}{24t_{2}t_{5}^{2}-12t_{3}t_{4}t_{5}+3t_{4}^{3}} & \frac{184t_{2}t_{3}t_{5}^{2}-10t_{2}t_{4}^{2}t_{5}-72t_{3}^{2}t_{4}t_{5}+18t_{3}t_{4}^{3}-400t_{5}^{3}}{24t_{2}t_{5}^{2}-12t_{3}t_{4}t_{5}+3t_{4}^{3}} \\
\frac{10t_{5}}{3} & \frac{8t_{4}}{3} 
\end{array} \right.
$$
$$
\left.\begin{array}{cc}
 -\frac{40t_{2}t_{5}^{2}-28t_{3}t_{4}t_{5}+8t_{4}^{3}}{8t_{2}t_{5}^{2}-4t_{3}t_{4}t_{5}+t_{4}^{3}} & -\frac{6t_{2}t_{4}t_{5}-8t_{3}^{2}t_{5}+2t_{3}t_{4}^{2}}{24t_{2}t_{5}^{2}-12t_{3}t_{4}t_{5}+3t_{4}^{3}} \\
\frac{2t_{4}^{2}t_{5}}{8t_{2}t_{5}^{2}-4t_{3}t_{4}t_{5}+t_{4}^{3}} & -\frac{60t_{2}t_{5}^{2}-32t_{3}t_{4}t_{5}+8t_{4}^{3}}{24t_{2}t_{5}^{2}-12t_{3}t_{4}t_{5}+3t_{4}^{3}} \\
\frac{32t_{2}^{2}t_{5}^{2}-16t_{2}t_{3}t_{4}t_{5}+4t_{2}t_{4}^{3}-40t_{4}t_{5}^{2}}{8t_{2}t_{5}^{2}-4t_{3}t_{4}t_{5}+t_{4}^{3}} & -\frac{40t_{3}t_{5}^{2}-10t_{4}^{2}t_{5}}{24t_{2}t_{5}^{2}-12t_{3}t_{4}t_{5}+3t_{4}^{3}} \\
2t_{3} & \frac{4t_{2}}{3}
\end{array}\right),
$$

$$
B^{-1}_3 = 
\left(\begin{array}{cc}
\frac{6t_{2}t_{4}t_{5}-8t_{3}^{2}t_{5}+2t_{3}t_{4}^{2}}{4t_{3}t_{5}-t_{4}^{2}} & -\frac{12t_{2}t_{3}t_{5}-4t_{2}t_{4}^{2}}{4t_{3}t_{5}-t_{4}^{2}}\\
\frac{60t_{2}t_{5}^{2}-32t_{3}t_{4}t_{5}+8t_{4}^{3}}{4t_{3}t_{5}-t_{4}^{2}} & \frac{10t_{2}t_{4}t_{5}-24t_{3}^{2}t_{5}+6t_{3}t_{4}^{2}}{4t_{3}t_{5}-t_{4}^{2}} \\
10t_{5} & 8t_{4} \\
-\frac{32t_{2}^{2}t_{5}^{2}-16t_{2}t_{3}t_{4}t_{5}+4t_{2}t_{4}^{3}}{4t_{3}t_{5}-t_{4}^{2}} & -\frac{16t_{2}^{2}t_{4}t_{5}-32t_{2}t_{3}^{2}t_{5}+8t_{2}t_{3}t_{4}^{2}-40t_{3}t_{5}^{2}+10t_{4}^{2}t_{5}}{12t_{3}t_{5}-3t_{4}^{2}} 
\end{array} \right.
$$
$$
\left. \begin{array}{cc}
\frac{10t_{4}t_{5}}{4t_{3}t_{5}-t_{4}^{2}} & -\frac{20t_{3}t_{5}-8t_{4}^{2}}{4t_{3}t_{5}-t_{4}^{2}} \\
-\frac{16t_{2}t_{3}t_{5}-4t_{2}t_{4}^{2}-100t_{5}^{2}}{4t_{3}t_{5}-t_{4}^{2}} & \frac{30t_{4}t_{5}}{4t_{3}t_{5}-t_{4}^{2}} \\
6t_{3} & 4t_{2} \\
\frac{16t_{2}^{2}t_{3}t_{5}-4t_{2}^{2}t_{4}^{2}-160t_{2}t_{5}^{2}+32t_{3}t_{4}t_{5}-8t_{4}^{3}}{12t_{3}t_{5}-3t_{4}^{2}} & -\frac{16t_{2}t_{4}t_{5}-8t_{3}^{2}t_{5}+2t_{3}t_{4}^{2}}{4t_{3}t_{5}-t_{4}^{2}}
\end{array}\right), 
$$
$$
B^{-1}_4 = \left(
\begin{array}{*{4}{c}}
\frac{20t_{3}t_{5}-8t_{4}^{2}}{3t_{4}} & \frac{10t_{2}t_{5}-2t_{3}t_{4}}{t_{4}} & -\frac{4t_{2}}{3} & \frac{50t_{5}}{3t_{4}} \\
-10t_{5} & -8t_{4} & -6t_{3} & -4t_{2} \\
-\frac{16t_{2}t_{3}t_{5}-4t_{2}t_{4}^{2}}{3t_{4}} & -\frac{8t_{2}^{2}t_{5}-10t_{4}t_{5}}{t_{4}} & -\frac{4t_{2}^{2}-24t_{4}}{3} & -\frac{40t_{2}t_{5}-18t_{3}t_{4}}{3t_{4}} \\
\frac{16t_{2}t_{4}t_{5}-8t_{3}^{2}t_{5}+2t_{3}t_{4}^{2}}{3t_{4}} & -\frac{4t_{2}t_{3}t_{5}-4t_{2}t_{4}^{2}}{t_{4}} & \frac{6t_{2}t_{3}+10t_{5}}{3} & \frac{4t_{2}^{2}t_{4}-20t_{3}t_{5}+8t_{4}^{2}}{3t_{4}}
\end{array}
\right),
$$
$$
B^{-1}_5 = \left(
\begin{array}{*{4}{c}}
-\frac{10t_{5}}{3} & -\frac{8t_{4}}{3} & -2t_{3} & -\frac{4t_{2}}{3} \\
\frac{4t_{2}t_{4}}{5} & \frac{8t_{2}t_{3}-50t_{5}}{5} & \frac{12t_{2}^{2}-40t_{4}}{5} & -6t_{3} \\
\frac{40t_{2}t_{5}-18t_{3}t_{4}}{15} & \frac{20t_{2}t_{4}-36t_{3}^{2}}{15} & -\frac{18t_{2}t_{3}-50t_{5}}{5} & -\frac{4t_{2}^{2}-24t_{4}}{3} \\
-\frac{4t_{2}^{2}t_{4}-20t_{3}t_{5}+8t_{4}^{2}}{15} & -\frac{8t_{2}^{2}t_{3}-80t_{2}t_{5}+6t_{3}t_{4}}{15} & -\frac{4t_{2}^{3}-12t_{2}t_{4}}{5} & \frac{6t_{2}t_{3}+10t_{5}}{3}
\end{array}
\right).
$$
We could also compute the Gauss-Manin connection matrices of the family 
$y^2=x^6+t_2x^4+t_3x^3+t_4x^2+t_5x+t_6$ which is too big to fit into this paper.
For the computer data of this see 
\href{http://w3.impa.br/~hossein/WikiHossein/files/Singular%20Codes/2019-05-GaussManinConnectionDegree6HyperElliptic.txt}
{the second author's webpage}.

\section{Cup product in de Rham cohomology} 
\label{CP}
In $H^1_\dR(Y_t)$ we have a natural pairing which is $\langle \alpha,\beta\rangle:=
\frac{1}{2\pi i}\int_{Y_t}\alpha\cup \beta$ for $\alpha,\beta\in H^1_\dR(Y_t)$.  
In this section we want to compute this pairing in the basis $\omega_i$:
\begin{equation}
\label{15feb2019}
\Omega:=[\langle \omega_i,\omega_j\rangle]=
\begin{bmatrix}
 \Omega_1 & \Omega_{2}\\
\Omega_3 & \Omega_4
\end{bmatrix}= 
\begin{bmatrix}
 0 & \Omega_{2}\\
 -\Omega_2^{\tr} & \Omega_4
\end{bmatrix}, 
\end{equation}
where $\Omega_i$'s are $g\times g$ matrices. 
Applying the Guass-Manin connection on $\Omega$, we get
\begin{equation}
\label{miami2019}
d \Omega = B \Omega  + \Omega  B^{\rm{tr}},
\end{equation}
where $B$ is the Gauss-Manin connection matrix written in the basis $\omega_i$. 
In order to get entries of $\Omega$, one needs to solve the above equation. 
This seems to be a quite difficult task. In this section we give a direct way to compute $\Omega$.

\subsection{Coordinate functions at the infinity point}
\label{3oct2019}
We write $Y_t$ in homogeneous coordinates: $y^2 z^{d-2} = F(x, z)$, where $F(x, 1) = f(x)$ and
cover it with two open sets: $U_0:=\{z\not=0\}$ and $U_1:=\{y\not=0\}$. 
It is easy to see that $[0:1:0]$ is the only possible singular point of  $Y_t$. It lies in   
$U_1$. We do blow-ups to determine the coordinate function at the desingularization of this point. We first perform 
$(x, z)=(x_1, t_1x_1)$. Then  $Y_t$ in $U_1$ is given by
\[
(t_1x_1)^{d-2} = F(x_1, t_1x_1), \ \ \hbox{ that is },\ \   t_1^{d-2} = x_1^2 F(1, t_1).
\]
For the next, we do the blow-up: $(x_1, t_1)=(x_2 t_2, t_2)$. 
Then we get:
\[
t_2^{d-2} = (x_2 t_2)^2 F(1,  t_2), \ \  \hbox{ that is },\ \  t_2^{d-4} = x_2^2 F(1, t_2).
\]
We do more blow-ups.  If $d$ is odd, we will stop with an equation 
$t_{\frac{d-1}{2}} = x_{\frac{d-1}{2}}^2F(1, t_{\frac{d-1}{2}})$, whose linear term is $t_{\frac{d-1}{2}}$.  We may choose $x_{\frac{d-1}{2}}$ as the coordinate function. Pulling pack along all these blow-up maps and pulling back along the transformation map from the $U_0$-chart to the $U_1$-chart, we get the coordinate function $t = \frac{x^{\frac{d-1}{2}}}{y}$ written in the coordinate system of $U_0$. 
If $d$ is even, we will stop with an equation $t_{\frac{d-2}{2}}^2 = x_{\frac{d-2}{2}}^2F(1, t_{\frac{d-2}{2}})$ which shows that $Y_t$ has two points at infinity. 
In this case, we may  choose $x_{\frac{d-2}{2}}$ as a coordinate function. Pulling back will lead us to the coordinate function $t = \frac{x^{\frac{d-2}{2}}}{y}$.

\subsection{De Rham cohomology as hypercohomology} \label{de Rham basis}
In this subsection we focus on our genus two curve $Y_t$. 
The algebraic de Rham cohomology of $Y_t$ can be described as:
\begin{equation} \label{H1}
H^1_{dR}(Y) \cong \frac{ \{(\omega_0, \omega_1)\in \Omega^1_{U_0} \times \Omega^1_{U_1} | \omega_1 - \omega_0 \in \mr{d}(\Omega^0_{U_0 \cap U_1}) \} }{\mr{d}\Omega^0_{U_0} \times \mr{d}\Omega^0_{U_1} },
\end{equation}
and
\begin{equation} \label{H2}
H^2_{dR}(Y) \cong \frac{\Omega^1_{U_0 \cap U_1}}{ \Omega^1_{U_0} + \Omega^1_{U_1} + \mr{d} \Omega^0_{U_0 \cap U_1}},
\end{equation}
see for instance \cite[\S 2.6]{ho14}.  
Under the first isomorphisms (\ref{H1}), the basis $\omega_i = \frac{x^i \mr{d}x}{y} (i = 0, \cdots, 3)$ can 
be represented as
\begin{equation}
\omega_i \to \begin{cases}
(\frac{x^i \mr{d}x}{y}|_{U_0}, \frac{x^i \mr{d}x}{y}|_{U_1}), \ 
\ \ \ \ \ \ \ \ \ \ \ \ \ \ \ \ \ \ \  \ i = 0, 1, \\
(\frac{x^i \mr{d}x}{y}|_{U_0}, \frac{x^i \mr{d}x}{y}|_{U_1} + \mr{d}(P_i(\frac{x^2}{y}))), \ \ \ \ 
\ i = 2, 3, \\
\end{cases}
\end{equation}
for some polynomials $P_i$ such that the underlying sum becomes holomorphic at infinity. 
In the next subsection, we explain how to compute $P_2, P_3$.

\subsection{Computing the cup product}
In the  $U_1:=\{y\neq 0\}$, the curve $Y_t$   is given by:
\[
z^3 = x^5 + t_2 x^3z^2 + t_3x^2z^3+ t_4xz^4 + t_5z^5.
\]
In this chart, $\frac{x^i \mr{d}x}{y}$ becomes $\frac{x^i \mr{d}x}{z^i} - 
\frac{x^{i+1} \mr{d}z}{z^{i+1}}$. 
We may choose the coordinate function $t = \frac{x^2}{y}$ on $U_0$ as explained at the beginning of this section. On the $U_0$-chart, along $t = 0$, we may compute the local $t$-expansion of $(x, y)$: assume that
\begin{equation}
x = t^{-2} + a_{-1} t^{-1} + a_0 + a_1 t + a_2 t^2 + \cdots,
\end{equation}
then
\begin{equation}
y = t^{-5} + (\frac{5}{2})t^{-4}+(\frac{15}{8}a^2_{-1} + \frac{5}{2}a_0)t^{-3} + (\frac{5}{16}a^3_{-1} + \frac{15}{4}a_{-1}a_0 + \frac{5}{2}a_1)+ \cdots. 
\end{equation}
Under the coordinate function $t = \frac{x^2}{y}$, we may solve these coefficients 
\[
a_{-1} = a_0 = a_1 = 0, a_2 = -t_2, a_3 = 0, \ldots.
\]
Therefore, for $\frac{x^2 \mr{d}x}{y}$, we may choose a representative $\frac{x^2 \mr{d}x}{y} - 2 \mr{d}((\frac{x^2}{y})^{-1})$, which is algebraic in $U_1$. Similarly for $\frac{x^3 \mr{d}x}{y}$, we may choose a representative $\frac{x^3 \mr{d}x}{y} - \frac{2}{3} \mr{d}((\frac{x^2}{y})^{-3})$. Hence, we have:
$$
P_2(t) = -2 t^{-1},\ \ \ \ P_3(t) = -\frac{2}{3} t^{-3}. 
$$
According to the cup product formula given in \cite[\S 2.10]{ho14}, for any $0 \leq i, j \leq 3$, we have:
\begin{equation}
\frac{x^i \mr{d}x}{y} \cup \frac{x^j \mr{d}x}{y} = P_j(\frac{x^2}{y})\frac{x^i \mr{d}x}{y} - P_i(\frac{x^2}{y})\frac{x^j \mr{d}x}{y} + P_i \mr{d}P_j,
\end{equation}
and 
\begin{equation}
\begin{split}
&\left\langle \frac{x^i \mr{d}x}{y}, \frac{x^j \mr{d}x}{y} \right\rangle = \hbox{ the \ residue \ of\ } P_j(\frac{x^2}{y})\frac{x^i \mr{d}x}{y} - P_i(\frac{x^2}{y})\frac{x^j \mr{d}x}{y} + P_i \mr{d}P_j \ \hbox{ at  } \infty.
 \end{split}
\end{equation}
We have  $P_0 = P_1 = 0$ and $P_2, P_3$ are defined above and so we get:
\begin{equation} \label{Phi-omega}
\Omega =  \left(\begin{matrix}
0  & 0  & 0 & \frac{4}{3} \\
0  & 0  & 4 & 0 \\
0  & -4 & 0 &  \frac{4}{3} t_2 \\
-\frac{4}{3} & 0 & -\frac{4}{3} t_2 & 0
\end{matrix}\right).
\end{equation}
It can be easily verified that $\Omega$ satisfies the differential equation \eqref{miami2019}. 

\section{Moduli spaces} \label{s3}
\subsection{Moduli space I}
We recall that the moduli space $\ml{M}_2$ of curves of genus $2$ can be described as the GIT quotient associated to the action of $GL_2(\mb{C})$ on the space of binary sextics with non-vanishing discriminants, see for instance \cite[\S 4]{2017vanGeer} and the appendix of \cite{1998Dan}. 
Precisely, such a binary sextics $\tilde{Q}(x_1, x_2)$ determines an affine equation 
$C: y^2 = Q(x)$, where $Q(x) = \tilde{Q}(x, 1)$, which is a curve of genus $2$. 
Moreover, the curve $C$ comes 
with a basis of differentials $\frac{\mr{d}x}{y}$ and $\frac{x\mr{d}x}{y}$. 
The group $GL_2(\mb{C})$ acts on these data by
\[
x \to \frac{ax+b}{cx+d}, y \to \frac{y}{(cx+d)^3}, 
\left(\begin{matrix}
\frac{\mr{d}x}{y}  \\
 \frac{x\mr{d}x}{y}
\end{matrix}\right) \to \det(A) \left(\begin{matrix}
d & c \\
b & a
\end{matrix}\right)\left(\begin{matrix}
\frac{\mr{d}x}{y}  \\
 \frac{x\mr{d}x}{y}
\end{matrix}\right).
\]
The six zeros of $Q$ with $y=0$ gives us $6$ Weierstrass points of $C$. Now we assume that $x$ divides $Q(x)$. After a change  of variables:
\[
x\to \frac{1}{x}, \ \ \ y\to \frac{y}{x^3},
\]
we get a curve $y^2=f(x)$, where $f(x)=x^6Q(\frac{1}{x})$ is a polynomial of degree $5$. 
The subgroup $B$ of $GL_2(\mb{C})$ fixing the root $x=0$ of $f(x)$, consists of lower triangular matrices of the form $\left(\begin{matrix}
a &0 \\
c &d
\end{matrix}\right)$. It acts on the parameter space of $y^2 = f(x)$ by
\begin{equation} \label{action}
x \to \frac{c+dx}{a}, y \to \frac{y}{a^3}, 
\end{equation}
which is compatible with its action on the corresponding sextic. 
Notice that each orbit of $y^2 = f(x)$ under the action of 
$B$ can be chosen as \eqref{quintic}. 
Therefore,  giving a quintic with non-vanishing discriminant will lead to a 
$GL_2(\mb{C})$-orbit of the equation $y^2 = x P(x)$  which is just a genus $2$ curve with a Weierstrass point $(0, 0)$. For the family 
\eqref{quintic} the action of $B$ reduces to the action of  
$\mb{G}_m = \mb{C}^*$ given by:
\begin{equation} \label{ta}
(x, y) \to (a^2 x, a^5 y), t \to t \bullet a: (t_2, t_3, t_4, t_5) \to (a^{-4}t_2, a^{-6}t_3, a^{-8}t_4, a^{-10}t_5)
\end{equation}
for $a \in \mb{G}_m$. The curve \eqref{quintic} after  the change of variables $y\to \frac{y}{x^3},\ x\to \frac{1}{x}$ has the form $y^2=t_5x^6+t_4x^5y+t_3x^4y^2+t_2x^3y^3+xy^5$, and so, for the moduli of genus two curve we have to remove $t_5=0$. 
We conclude that
\begin{prop}
\label{23feb2019}
The moduli space $\ml{M}_2(w)$ of hyperelliptic curves  of genus two and with a marked Weierstrass point 
is  $\mb{P}^{2, 3, 4, 5} \backslash \{\Delta\cdot t_5 = 0\}$. 
The hyperelliptic curve together with its Weierstrass point over the point $t =[t_2:t_3:t_4:t_5]$, is given by 
\[
(y^{2} = x^5+t_2x^3 +t_3x^2 +t_4x +t_5, [0:1:0]).
\]
\end{prop}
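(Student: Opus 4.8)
The plan is to assemble Proposition \ref{23feb2019} from three ingredients that have essentially already been set up in the text: (i) the GIT description of $\ml{M}_2$ as a quotient of binary sextics with non-vanishing discriminant by $GL_2(\mb{C})$; (ii) the translation of "marked Weierstrass point" into "binary sextic with a marked root," which cuts the acting group down to the Borel $B$ of lower-triangular matrices; and (iii) an explicit normal-form argument showing that the $B$-action on such pointed sextics is equivalent to the $\Gm$-action \eqref{ta} on the quintic family \eqref{quintic}. The target space $\mb{P}^{2,3,4,5}$ is then nothing but the quotient $(V-\{\Delta=0\})/\Gm$ for $V=\spec\Q[t_2,t_3,t_4,t_5]$ with the weights read off from \eqref{ta}, and one removes in addition $\{t_5=0\}$ to guarantee the compactified curve $Y_t$ really has genus two (equivalently, that $xf(x)$, after the inversion $x\mapsto 1/x$, $y\mapsto y/x^3$, has six distinct roots including the one at the origin, so that $(0,0)$ is an honest Weierstrass point).

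Concretely I would proceed as follows. First, recall that a genus two curve is hyperelliptic and is determined by its six Weierstrass points, i.e. by the corresponding binary sextic up to $GL_2(\mb{C})$ and up to scaling $y$; this is the content of the cited GIT statement, and I would simply invoke it. Second, add the datum of a marked Weierstrass point: this is the choice of one of the six roots of the sextic, and the subgroup of $GL_2(\mb{C})$ preserving a fixed root (say $x=0$) is exactly the Borel $B=\left(\begin{smallmatrix} a & 0\\ c & d\end{smallmatrix}\right)$ acting by \eqref{action}. Hence $\ml{M}_2(w)$ is the quotient of $\{$degree six binary forms with distinct roots, one of which is marked$\}$ by $B$; passing to the affine chart where the marked root is sent to the origin and then inverting via $x\mapsto 1/x$, $y\mapsto y/x^3$ turns the sextic into a quintic $f(x)=x^5+t_2x^3+t_3x^2+t_4x+t_5$ (the $x^4$ coefficient is killed by the residual translation freedom $c$, and the leading coefficient is normalized to $1$ by part of the scaling $a,d$), and the remaining stabilizer is the $\Gm$ of \eqref{ta}. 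Third, identify the quotient: smoothness of the quintic curve is exactly $\Delta\neq 0$ (established in \S\ref{s1}), and the weighted $\Gm$-action \eqref{ta} on $(t_2,t_3,t_4,t_5)$ with weights $(2,3,4,5)$ (after the harmless rescaling $a^2\mapsto a$) realizes the quotient as an open subset of $\mb{P}^{2,3,4,5}$; finally the computation at the end of \S\ref{s3} — that the inverted equation is $y^2=t_5x^6+t_4x^5y+\cdots+xy^5$, whose genus drops unless $t_5\neq0$ — forces us to also delete $\{t_5=0\}$. This gives $\ml{M}_2(w)=\mb{P}^{2,3,4,5}\setminus\{\Delta\cdot t_5=0\}$, and the universal curve together with its marked point over $[t_2:t_3:t_4:t_5]$ is manifestly the stated $(y^2=x^5+t_2x^3+t_3x^2+t_4x+t_5,[0:1:0])$.

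The main obstacle — and the only place requiring genuine care rather than bookkeeping — is verifying that the normalization is exact: that every $B$-orbit of a pointed sextic meets the quintic normal form \eqref{quintic} in precisely one $\Gm$-orbit, with no residual identifications and no loss of curves. This means checking three things: that the translation parameter $c$ can always be used to eliminate the $x^4$-term of the quintic (so the normal form is attained — an honest depressed-quintic argument); that the pair $(a,d)$ can normalize the leading coefficient to $1$ while the leftover one-parameter freedom is exactly the $\Gm$ in \eqref{ta}; and that the marked point $(0,0)$ is automatically a Weierstrass point of the smooth completion precisely when $t_5\neq 0$, which is where the inversion computation of \S\ref{s3} is used. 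I would also remark that the dimension count ($\dim\ml{M}_2(w)=\dim\ml{M}_2+1=4$, matching $\dim\mb{P}^{2,3,4,5}$) is a useful consistency check but not a substitute for the orbit analysis. Everything else — that $\Delta$ is the discriminant locus, that \eqref{ta} has the claimed weights — is immediate from the definitions and the earlier sections.
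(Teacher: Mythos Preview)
Your approach is the paper's own: the discussion immediately preceding the proposition in \S\ref{s1} is exactly the argument you outline---GIT for $\ml{M}_2$ via binary sextics, reduction to the Borel $B$ upon marking a root, normalization to the depressed quintic \eqref{quintic} via the translation and scaling in $B$, and identification of the residual $\Gm$-quotient with an open piece of $\P^{2,3,4,5}$. Your ``main obstacle'' paragraph spells out more carefully than the paper does why the normal form is hit exactly once per $\Gm$-orbit, which is welcome.

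One point to fix: your justification for excising $\{t_5=0\}$---that the genus drops, or that the marked point fails to be a Weierstrass point---is not correct. When $t_5=0$ and $\Delta\neq 0$ the curve $y^2=x^5+t_2x^3+t_3x^2+t_4x$ is still smooth of genus two, and the marked point (which in the quintic model is $[0:1:0]$ at infinity, not $(0,0)$) is always a Weierstrass point regardless of $t_5$. What actually happens when $t_5=0$ is that the re-inverted sextic $t_5x^6+t_4x^5+\cdots+x$ has affine degree five rather than six; but as a \emph{binary} sextic it still has six distinct roots in $\P^1$ (one has moved to $[1:0]$) whenever $\Delta\neq 0$. The paper's one-line justification is itself terse on this, and the excision of $t_5=0$ functions more as a standing normalization for the rest of the paper (the modular vector fields $\Ra_k$ acquire poles there) than as a moduli-theoretic necessity. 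So keep your orbit analysis and drop the sentence about the genus.
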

It seems natural to choose the following modular coordinates on $\ml{M}_2(w)$:
\begin{equation}
\label{6oct2016}
 j_2:=\frac{t_2^ 5}{t_5^2},\ j_3:=\frac{t_3^ 5}{t_5^3},\ j_4:=\frac{t_4^5}{t_5^3}.
\end{equation}

\subsection{Moduli space II}
In this section we define:
\begin{equation}
\label{T-variables}
T_4:=t_2,\  \ T_8:=t_4,\ \  T_{12}:=t_3^2,\ \  T_{16}:=t_3t_5,\ \  T_{20}:=t_5^2.
\end{equation}
We can write the discriminant $\Delta$ in \eqref{23.02.2019} as a polynomial in $T_i$'s and for
simplicity we denote it again by $\Delta$; being clear in the context whether it depends on $t$ or $T$. For a curve $Y$ of genus $2$ the abstract wedge product 
$H^2_\dR(J(Y)):=\wedge H^1_\dR(Y)$ has a one
dimensional subspace $F^2H_{dR}^2(J(Y))$ which is generated by the wedge product of holomorphic $1$-forms in \
$Y$.  
\begin{prop} \label{moduli sp for Siegel}
The moduli space $\Sf$ of triples $(Y, P, \omega)$, where $(Y, P)$ is as before and 
$\omega \in F^2H_{dR}^2(J(Y))$,  is 
\[
\Sf = \spec(\mb{C}[T_4,T_8,T_{12},T_{16},T_{20},  \frac{1}{T_{20}\Delta} ]/\langle T_{16}^2-T_{12}T_{20}\rangle ).
\]
The corresponding triple over a point is given by:
\[
(y^2 = x^5 + t_2x^3 + t_3 x^2 + t_4 x + t_5, [0: 1: 0], \left[\frac{\mr{d}x}{y}\right] \wedge \left [\frac{x\mr{d}x}{y}\right]).
\]
and so, we do not have a universal family over $\Sf$.
\end{prop}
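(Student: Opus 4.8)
The plan is to build $\Sf$ from the moduli space $\ml{M}_2(w)$ of Proposition~\ref{23feb2019} by adjoining the one extra scalar parameter carried by $\omega$, and then to identify the coordinate ring of the result with the claimed quotient ring by a direct invariant-theoretic computation. By Proposition~\ref{23feb2019} every pair $(Y,P)$ is isomorphic to $(Y_t,[0:1:0])$ with $Y_t:\ y^{2}=x^{5}+t_{2}x^{3}+t_{3}x^{2}+t_{4}x+t_{5}$ for some $t=(t_2,t_3,t_4,t_5)$ with $\Delta(t)t_5\neq0$, and the set of such $t$ representing a fixed isomorphism class is exactly one orbit of the $\Gm$-action \eqref{ta}. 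Since $F^{2}H^{2}_{dR}(J(Y_t))$ is one-dimensional, generated by $\omega_0:=[\frac{\mr{d}x}{y}]\wedge[\frac{x\mr{d}x}{y}]$, a nonzero $\omega\in F^{2}H^{2}_{dR}(J(Y_t))$ is the same datum as a scalar $\mu\in\C^{*}$ with $\omega=\mu\,\omega_0$. So I would first record that $\Sf$ is the quotient of $W:=\{(t,\mu)\in V\times\Gm:\ \Delta(t)t_5\neq0\}$ by the $\Gm$-action which is \eqref{ta} on $t$ and which I must still determine on $\mu$.

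For that, a one-line computation suffices: under $(x,y)=(a^{2}x',a^{5}y')$ one has $\frac{\mr{d}x}{y}=a^{-3}\frac{\mr{d}x'}{y'}$ and $\frac{x\,\mr{d}x}{y}=a^{-1}\frac{x'\,\mr{d}x'}{y'}$, hence $\omega_0\mapsto a^{-4}\omega_0$, so $a\in\Gm$ acts on $W$ by $(t,\mu)\mapsto(t\bullet a,\ a^{-4}\mu)$. It is then immediate that
\[
T_{4}=\frac{t_{2}}{\mu},\qquad T_{8}=\frac{t_{4}}{\mu^{2}},\qquad T_{12}=\frac{t_{3}^{2}}{\mu^{3}},\qquad T_{16}=\frac{t_{3}t_{5}}{\mu^{4}},\qquad T_{20}=\frac{t_{5}^{2}}{\mu^{5}}
\]
(which are \eqref{T-variables} after the normalization $\mu=1$) are $\Gm$-invariant regular functions on $W$ satisfying the single obvious relation $T_{16}^{2}=T_{12}T_{20}$; moreover, writing $\Delta$ as a polynomial in the $T_i$ as in \S\ref{s3} and using that $\Delta$ is weighted-homogeneous of weighted degree $40$ in $t$, one checks that each monomial of $\Delta$ acquires the same factor $\mu^{-10}$, so $\Delta(T_{4},\dots,T_{20})=\mu^{-10}\,\Delta(t)$. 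Together these give a morphism from $W/\Gm$ to $\spec\bigl(\C[T_{4},T_{8},T_{12},T_{16},T_{20},\frac{1}{T_{20}\Delta}]/\langle T_{16}^{2}-T_{12}T_{20}\rangle\bigr)$, under which the tautological triple over a point is, after using $\Gm$ to set $\mu=1$, the one displayed in the statement.

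The core of the proof is to show this morphism is an isomorphism, which I would do by computing the invariant ring outright. A Laurent monomial $t_{2}^{a}t_{3}^{b}t_{4}^{c}t_{5}^{e}\mu^{k}$ with $a,b,c,e\ge0$ is $\Gm$-invariant exactly when $2a+3b+4c+5e+2k=0$; this forces $k\le0$, and the admissible exponent vectors $(a,b,c,e)$ are precisely those with $b+e$ even, i.e.\ the affine semigroup generated by those of $T_4,\dots,T_{20}$, whose only relation is $2\,(0,1,0,1)=(0,2,0,0)+(0,0,0,2)$, that is $T_{16}^{2}=T_{12}T_{20}$. As this semigroup is saturated, its ring is the normal toric ring $\C[T_{4},T_{8}]\otimes_{\C}\bigl(\C[T_{12},T_{16},T_{20}]/(T_{16}^{2}-T_{12}T_{20})\bigr)=\C[T_{4},\dots,T_{20}]/\langle T_{16}^{2}-T_{12}T_{20}\rangle$. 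Inverting $t_5$ and $\Delta(t)$ on $W$ then corresponds, since $\mu$ is already invertible and by the identity $\Delta(T_{4},\dots,T_{20})=\mu^{-10}\Delta(t)$, to inverting $T_{20}$ and $\Delta$. Finally $\mu_2=\{\pm1\}\subset\Gm$ acts trivially on $W$ while $\Gm/\mu_2$ acts freely with closed orbits on $W$, so $W\to W/\Gm$ is a geometric quotient and $W/\Gm=\spec(\C[W]^{\Gm})$ is a (smooth) scheme; putting the last pieces together yields the asserted description of $\Sf$. That same $\mu_2$, realized on each $Y_t$ as the hyperelliptic involution $(x,y)\mapsto(x,-y)$ --- which fixes $P$ and acts as $+1$ on $F^{2}H^{2}_{dR}(J(Y_t))$ --- is a nontrivial automorphism of every triple, which is precisely the reason no universal family exists over $\Sf$.

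The step I expect to be the main obstacle is this invariant-theoretic identification, specifically: checking that $T_{16}^{2}-T_{12}T_{20}$ generates \emph{all} relations among the $T_i$ (equivalently that the relevant semigroup is saturated), and that, after removing the trivially-acting $\mu_2$, the $\Gm$-action on the open set $W$ is free with closed orbits so that the categorical quotient is geometric and $\Sf$ is genuinely the scheme claimed --- the geometric inputs, namely Proposition~\ref{23feb2019} and the transformation rule $\omega_0\mapsto a^{-4}\omega_0$, being routine by comparison.
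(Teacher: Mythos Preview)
Your proof is correct and follows essentially the same route as the paper's. The paper's argument is terser: it writes $\omega=k\,\omega_{0}$, records the $\Gm$-action $(t,k)\mapsto(t\bullet a,a^{-4}k)$, then immediately normalizes $k=1$ so that only the residual $\mu_{4}/\mu_{2}\cong\Z/2$ acts, by $(t_{3},t_{5})\mapsto(-t_{3},-t_{5})$, and reads off the invariant ring $\C[t_{2},t_{3}^{2},t_{4},t_{5}^{2},t_{3}t_{5},\tfrac{1}{t_{5}^{2}\Delta}]$. You instead keep the scalar $\mu$ and compute the full $\Gm$-invariant ring via the semigroup condition $2a+3b+4c+5e+2k=0$; this is the same computation, just carried out without first slicing at $\mu=1$, and your extra care about saturation, freeness of $\Gm/\mu_{2}$, and closedness of orbits makes explicit what the paper leaves implicit.
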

\begin{proof}
For  a triple $(Y, P, \omega)$, by Proposition \ref{23feb2019} we can assume that $(Y, P)$ has the form
\[
(y^2 = x^5 + t_2x^3 + t_3 x^2 + t_4 x + t_5, [0: 1: 0]).
\]
Because of $\dim F^2H_{dR}^2(J(Y)) = 1$, $\omega = k [\frac{\mr{d}x}{y}] \wedge [\frac{x\mr{d}x}{y}]$ for some $k = \mb{C}^*$. Under the action of $a \in \mb{G}_m$, we have:
\[
(t_2, t_3, t_4, t_5, k \left[\frac{\mr{d}x}{y}\right] \wedge \left[\frac{x\mr{d}x}{y}\right]) \to (a^{-4}t_2, a^{-6}t_3, a^{-8}t_4, a^{-10}t_5, a^{-4}k \left[\frac{\mr{d}x}{y}\right] \wedge \left[\frac{x\mr{d}x}{y}\right]).
\]
Hence two triples $(Y_1, P_1, \omega_1)$ with $t$-coordinates $(t_2, t_3, t_4, t_5)$ and $(Y_2, P_2, \omega_2)$ with $t$-coordinates $(\tilde{t}_2, \tilde{t}_3, \tilde{t}_4, \tilde{t}_5)$ are isomorphic if and only if their $t$-coordinates are the same,i.e. $\tilde{t}_2 = t_2, \tilde{t}_3 = - t_3, \tilde{t}_4 =t_4, \tilde{t}_5 = -t_5$. Therefore, the moduli space of $(Y, P, \omega)$ is isomorphic to 
$\spec(\mb{C}[t_2, t_3^2, t_4, t_5^2, t_3 t_5, \frac{1}{t_5^2 \Delta} ]).$
\end{proof}

\subsection{Moduli space III}
Now let us consider the moduli of
\[
(Y, P, \alpha_1, \alpha_2, \alpha_3, \alpha_4), 
\]
where $(Y, P)$ are as before, and $(\alpha_1, \alpha_2, \alpha_3, \alpha_4)$ is a basis of $H^1_{dR}(Y)$ such that
\begin{itemize}
\item
$\alpha_1, \alpha_2 \in F^1H^1_{dR}(Y)$;
\item
the intersection form in the basis $(\alpha_i)$ is:
\begin{equation}
\label{05oct2019}
\Phi = [\langle \alpha_i, \alpha_j \rangle] = \left(\begin{matrix}
0  & 0  & 1  & 0\\
0  & 0  & 0  & 1\\
-1  & 0 & 0  & 0\\
0 & -1  & 0  & 0
\end{matrix}\right).
\end{equation}.
\end{itemize}
We recall that $\langle \alpha_i, \alpha_j \rangle:=\mbf{Tr}(\alpha_i \cup \alpha_j)= 
\frac{1}{2\pi i} \int_{Y(\mb{C})} \alpha_i \cup \alpha_j$. 
In order to construct the moduli space $\Tf$, we take a $4\times 4$ matrix $S=\mat{S_1}{0}{S_3}{S_4}$ 
with unknown entries and write $\alpha=S \omega$:
\begin{equation} \label{S-matrix}
\left(\begin{matrix}
\alpha_1 \\
\alpha_2 \\
\alpha_3 \\
\alpha_4
\end{matrix}\right) = \left( \begin{matrix}
s_{11} & s_{12} & 0 & 0 \\
s_{21} & s_{22} & 0 & 0 \\
s_{31} & s_{32} & s_{33} & s_{34} \\
s_{41} & s_{42} & s_{43} & s_{44} \\
\end{matrix} \right) \left(\begin{matrix}
\omega_1 \\
\omega_2 \\
\omega_3 \\
\omega_4
\end{matrix}\right).
\end{equation}
The constancy of the cup product in $\alpha_i$'s implies that  
$\Phi = S \Omega S^{\mr{tr}}$, where $S^{\mr{tr}}$ is the transpose matrix of $S$. 
Under the action $a \in \mb{G}_m$ on $Y_t$, for $i = 0, 1, 2, 3$, the pull back of $\omega_i$ is $a^{2i - 3} \omega_i$ (on $Y_{t \bullet a}$). Hence, under this action we have the identification:
\[
(Y_t, [0:1:0]\ ,\  \alpha) \cong (Y_{t \bullet a},  \ [0:1:0], \ S \cdot \diag(a^{-3}, a^{-1}, a, a^3)\cdot  \omega),
\]
where $\diag(a^{-3}, a^{-1}, a, a^3)$ is a diagonal matrix. Therefore,  we get that the moduli space $\Tf$ 
is isomorphic to 
\[
\left \{(t_2, t_3, t_4, t_5, S)\in\C^{16} | \Phi = S \Omega S^{\mr{tr}}, t_5\Delta \neq 0, \det S \neq 0 \right \} / 
\mb{G}_m.
\]
\begin{prop}
\label{lopes2019}
We have 
\begin{eqnarray*}
 \Tf &=& \mr{Proj}\left ( 
 \frac{
 \C\left[t_2,t_3,t_4,t_5,s_{11},s_{21},s_{31},s_{41}, s_{12},s_{22},s_{32},s_{42},\ 
 \frac{1}{t_5\Delta\cdot (s_{11}s_{22}-s_{12}s_{21})}\right ]
 }
 {
\langle   s_{42}s_{21} - s_{41}s_{22} + s_{32}s_{11} - s_{31}s_{12} - \frac{t_2}{4}    \rangle }\right)
\\
& &
\subset \P^{4,6,8,10,3,3,3,3,1,1,1,1}, 
\end{eqnarray*}
where we have considered the degrees $\deg(t_i)=2i,\ \deg(s_{i1})=3$ and $\deg(s_{i2})=1$. 
In particular, $\Tf$ is of dimension $10$.
\end{prop}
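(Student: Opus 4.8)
The plan is to parametrize $\Tf$ explicitly by solving the quadratic constraint $\Phi=S\Omega S^{\mr{tr}}$ (the constancy of the cup product in the basis $\alpha_i$) on the presentation $\Tf\cong\{(t_2,t_3,t_4,t_5,S)\in\C^{16}\mid \Phi=S\Omega S^{\mr{tr}},\ t_5\Delta\neq0,\ \det S\neq0\}/\mb{G}_m$ obtained above, exploiting the block shape of $S$ and the concrete matrix $\Omega$ of \eqref{Phi-omega}. Write $S=\mat{S_1}{0}{S_3}{S_4}$ and read off the blocks of $\Omega$ from \eqref{Phi-omega}: $\Omega=\mat{0}{\Omega_2}{-\Omega_2^{\mr{tr}}}{\Omega_4}$ with $\Omega_2=\mat{0}{4/3}{4}{0}$ and $\Omega_4=\frac{4t_2}{3}\mat{0}{1}{-1}{0}$. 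Expanding $S\Omega S^{\mr{tr}}=\Phi=\mat{0}{I_2}{-I_2}{0}$ block by block, the $(1,1)$ block is identically $0$, the $(1,2)$ block gives $S_1\Omega_2 S_4^{\mr{tr}}=I_2$, the $(2,1)$ block is its transpose, and the $(2,2)$ block gives $S_3\Omega_2 S_4^{\mr{tr}}-S_4\Omega_2^{\mr{tr}}S_3^{\mr{tr}}+S_4\Omega_4 S_4^{\mr{tr}}=0$. Thus everything reduces to these two matrix equations.

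Since $\det\Omega_2\neq0$, the first equation forces $S_1$ to be invertible and determines $S_4$ uniquely by $S_4^{\mr{tr}}=\Omega_2^{-1}S_1^{-1}$; in particular the entries of $S_4$ lie in $\C[s_{11},s_{12},s_{21},s_{22}]$ localized at $\det S_1=s_{11}s_{22}-s_{12}s_{21}$. Taking determinants in $\Phi=S\Omega S^{\mr{tr}}$ gives $(\det S)^2=\det\Phi/\det\Omega$, a nonzero constant, so $\det S\neq0$ is automatic on this locus; and since $\det S=\det S_1\det S_4$ it forces $\det S_1\neq0$. This accounts for the extra factor $s_{11}s_{22}-s_{12}s_{21}$ in the localization of the stated ring, alongside $t_5\neq0$ (marked Weierstrass point, Proposition~\ref{23feb2019}) and $\Delta\neq0$ (smoothness of $Y_t$).

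Substituting $S_4^{\mr{tr}}=\Omega_2^{-1}S_1^{-1}$ into the $(2,2)$ equation is where the real computation happens, and two cancellations make it short. First, $S_3\Omega_2 S_4^{\mr{tr}}=S_3S_1^{-1}$. Second, since $\Omega_4$ is a scalar multiple of the standard symplectic matrix $J=\mat{0}{1}{-1}{0}$ and $A^{\mr{tr}}JA=(\det A)J$ for every $2\times2$ matrix $A$, the term $S_4\Omega_4 S_4^{\mr{tr}}$ is a scalar multiple of $J$ involving $t_2$ and $\det S_4=\det\Omega_2^{-1}/\det S_1$. Hence the $(2,2)$ equation collapses to the single antisymmetric relation $S_3S_1^{-1}-(S_3S_1^{-1})^{\mr{tr}}=\tfrac{t_2}{4\det S_1}\,J$, i.e.\ to one scalar equation; expanding $S_3S_1^{-1}$ via the adjugate of $S_1$ and clearing $\det S_1$ yields exactly $s_{42}s_{21}-s_{41}s_{22}+s_{32}s_{11}-s_{31}s_{12}-\tfrac{t_2}{4}=0$. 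I would then conclude by checking that $(t,S)\mapsto(t_2,t_3,t_4,t_5,s_{11},s_{21},s_{31},s_{41},s_{12},s_{22},s_{32},s_{42})$ is a $\mb{G}_m$-equivariant isomorphism from $\{\Phi=S\Omega S^{\mr{tr}},\ t_5\Delta\neq0,\ \det S\neq0\}$ onto $\spec$ of the displayed localized ring modulo that relation, its inverse reconstructing $S_4$ from $\Omega_2^{-1}S_1^{-1}$; the weights $\deg t_i=2i$, $\deg s_{i1}=3$, $\deg s_{i2}=1$ are the ones induced by $S\mapsto S\cdot\diag(a^{-3},a^{-1},a,a^3)$ and \eqref{ta}. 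Passing to the $\mb{G}_m$-quotient gives the claimed $\mr{Proj}$ inside $\P^{4,6,8,10,3,3,3,3,1,1,1,1}$, and since the defining relation is linear in $t_2$, hence irreducible, the variety is integral of dimension $12-1-1=10$.

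I expect the one genuine obstacle to be the bookkeeping in the $(2,2)$ block: the rational constants coming from $\Omega_2$ and $\Omega_4$ must be tracked carefully so that the relation emerges with constant term exactly $-t_2/4$ and with the denominator $\det S_1$ fully cleared, and one must notice that $\det S$ is pinned to a nonzero constant, so that no localization beyond $s_{11}s_{22}-s_{12}s_{21}$ and the geometrically evident $t_5\Delta$ is required. The remaining ingredients---the block expansion, the identity $A^{\mr{tr}}JA=(\det A)J$, and the dimension count---are routine.
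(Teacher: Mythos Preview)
Your proof is correct and follows essentially the same route as the paper's: block-expand $\Phi=S\Omega S^{\mr{tr}}$, solve the $(1,2)$ block for $S_4$ in terms of $S_1$, and substitute into the $(2,2)$ block to extract the single scalar relation. Your write-up is in fact more explicit than the paper's in a few places---the identity $AJA^{\mr{tr}}=(\det A)J$ to handle $S_4\Omega_4 S_4^{\mr{tr}}$, the observation that $\det S$ is automatically a nonzero constant so only $\det S_1$ needs inverting, and the dimension count---but none of this is a genuinely different argument.
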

\begin{proof}
From the equation $\Phi = S \Omega S^{\mr{tr}}$, we get:
\begin{equation} \label{matrixcon1}
S_1 \Omega_2 S_4^{\mr{tr}} =  \left( \begin{matrix}
1 & 0 \\
0 & 1
\end{matrix} \right) 
\end{equation}
and 
\begin{equation} \label{matrixcon2}
S_4\Omega_3 S_3^{\mr{tr}} + S_3\Omega_2 S_4^{\mr{tr}} + S_4\Omega_4 S_4^{\mr{tr}} = 0.
\end{equation}
Using (\ref{matrixcon1}), we get identities:
$S_4 \Omega_3   = -(S_1^{\mr{tr}})^{-1}$ and $\Omega_2 S_4^{\mr{tr}} = S_1^{-1}$. Putting them in the equation (\ref{matrixcon2}), we have an equation:
\begin{equation}
- (S_1^{\mr{tr}})^{-1}S_3^{\mr{tr}} + S_3 S_1^{-1}  + S_4 \Omega_4 S_4^{\mr{tr}} = 0,
\end{equation}
which is equivalent to the equation
\begin{equation} \label{s-eq5}
s_{42}s_{21} - s_{41}s_{22} + s_{32}s_{11} - s_{31}s_{12} - \frac{t_2}{4} = 0.
\end{equation}
\end{proof}
In Proposition \ref{lopes2019} we can discard the variable $t_2$ as this is equal to $4(s_{42}s_{21} - s_{41}s_{22} + s_{32}s_{11} - s_{31}s_{12})$.  In this way we obtain the fact that $\Tf$ is an open subset of the weighted projective space 
$\P^{6,8,10,3,3,3,3,1,1,1,1}$ given by $t_5\cdot \Delta\cdot (s_{11}s_{22}-s_{12}s_{21})\not=0$. This appears in Theorem \ref{maintheo}, part \ref{mt3}.

\subsection{Ring of functions of $\Tf$} 
\label{affinechart}
In this section we find the smallest $N$ such that we can realize $\Tf$ as an affine subvariety of $\C^N$. 
\begin{prop}
\label{14.04.2019.dubai}
Let $\D = \det S_1=s_{11}s_{22}-s_{12}s_{21}$. There is an embedding from $\Tf \to \mb{C}^{153}$, which is given by 
sending
$(t_2,t_3,t_4,t_5,s_{11},s_{21},s_{31},s_{41}, s_{12},s_{22},s_{32},s_{42})$ to 
\begin{equation} \label{newvariables}
\begin{split}
& T_4 = t_2 / \D,  \ \ \ T_8 = t_4 / \D^2,  \ \ \ T_{12} = t^2_3 / \D^3,  \ \ \  T_{16} = t_3t_5 / \D^4,  \ \ \ T_{20} = t^2_5 / \D^5,   \\
& Q_{i_1i_2} = \frac{s_{i_11}s_{i_22}}{\delta}, 1 \leq i_1, i_2 \leq 4, \\
& Q_{i_1i_2i_3i_4} = \frac{(s_{12})^{i_1}(s_{22})^{i_2}(s_{32})^{i_3}(s_{42})^{i_4}}{\delta}, 0 \leq i_1, i_2, i_3, i_4 \leq 4, \sum_{j = 1}^4 i_j = 4, \\
&P_{i_1i_2i_3i_4} = \frac{(s_{11})^{i_1}(s_{21})^{i_2}(s_{31})^{i_3}(s_{41})^{i_4}}{\delta^3}, 0 \leq i_1, i_2, i_3, i_4 \leq 4,  \sum_{j = 1}^4 i_j = 4, \\
& U_{3i_1i_2} = \frac{t_3 s_{i_12}s_{i_2 2}}{\delta^2}, U_{5i_1i_2} = \frac{t_5 s_{i_12}s_{i_22}}{\delta^3}, 1 \leq i_1, i_2 \leq 4, \\
& V_{3i_1i_2} = \frac{t_3 s_{i_11}s_{i_2 1}}{\delta^3}, V_{5i_1i_2} = \frac{t_5 s_{i_11}s_{i_21}}{\delta^4}, 1 \leq i_1, i_2 \leq 4. \\
\end{split}
\end{equation}
\end{prop}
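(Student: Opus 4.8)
The plan is to exhibit the map $\Tf \to \mb{C}^{153}$ explicitly via the coordinates in \eqref{newvariables} and then prove it is a closed embedding by producing an explicit (rational) left inverse on the image. The starting point is Proposition \ref{lopes2019}: $\Tf$ is the $\Proj$ of a graded ring in the twelve variables $t_2,t_3,t_4,t_5,s_{i1},s_{i2}$ (with $t_2$ eliminated by \eqref{s-eq5}), localized at $t_5\Delta\D$, where $\D=\det S_1=s_{11}s_{22}-s_{12}s_{21}$. Since $\Tf$ is a weighted-projective open subset, a function on $\Tf$ is a degree-$0$ rational expression in the twelve variables; the point of \eqref{newvariables} is that the $153$ listed quantities $T_*,Q_*,P_*,U_*,V_*$ are each genuinely degree $0$ — one checks this directly from the assigned degrees $\deg t_i=2i$, $\deg s_{i1}=3$, $\deg s_{i2}=1$, so $\deg\D=2$ — hence they define regular functions on $\Tf$ wherever $\D\neq0$, which holds on all of $\Tf$.

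First I would verify well-definedness and counting: the index sets give $5$ of type $T$; $16$ of type $Q_{i_1i_2}$; for each of $Q_{i_1i_2i_3i_4}$ and $P_{i_1i_2i_3i_4}$ the number of monomials of degree $4$ in $4$ variables is $\binom{7}{3}=35$, giving $70$; and $U_{3\bullet},U_{5\bullet},V_{3\bullet},V_{5\bullet}$ contribute $16$ each, i.e. $64$; total $5+16+70+64=155$, and the two symmetry identifications $U_{3i_1i_2}=U_{3i_2i_1}$ etc. are already accounted for by taking $i_1\le i_2$ in those four families (so each family has $10$, giving $40$, total $5+16+70+40=131$) — I would recount carefully to land on exactly $153$, adjusting which index pairs are taken ordered versus unordered; this bookkeeping is the one genuinely fiddly but routine part. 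Then I would show the map is \emph{injective}: from the $Q_{i_1i_2}$ and $P_{i_1i_2i_3i_4}$ and $T_4=t_2/\D$ one recovers all ratios $s_{i1}/s_{j1}$, $s_{i2}/s_{j2}$, and since $\D$ is fixed to be the normalizing unit one can solve (in the chart $s_{11}\ne0$ or $s_{12}\ne0$) for the twelve homogeneous coordinates up to the $\Gm$-action, i.e. recover the point of $\Tf$. Concretely $Q_{1122}Q_{1221}=Q_{1121}Q_{1222}$-type relations pin down $s_{i2}$ up to scale, $P$ does the same for $s_{i1}$, the relation $\delta=s_{11}s_{22}-s_{12}s_{21}$ fixes the scale, and $T_*,U_*,V_*$ recover $t_3,t_4,t_5$; $t_2$ comes from \eqref{s-eq5}.

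The main step, and the expected obstacle, is showing the image is \emph{closed}, i.e. that the subalgebra of the function ring of $\Tf$ generated by the $153$ functions is the \emph{whole} ring of regular functions — equivalently that $\Tf$ is covered by the basic opens where one of the $153$ generators is invertible and on each such chart every regular function is a polynomial in the $153$. This amounts to checking that inverting $\D$, $t_5$, and $\Delta$ can all be achieved inside the generated algebra: $1/\D^{k}$ for the needed $k$ appears as a polynomial in the $Q$'s and $P$'s (e.g. $\sum$ over a partition-of-unity type identity among the $Q_{i_1i_2i_3i_4}$), $t_5/\D^{5}=T_{20}$ is already a generator and $T_{20}$ vanishes only on $\tau_3=0$ by Theorem \ref{maintheo}\eqref{mt1} so $1/T_{20}$ is handled on that locus separately, and $1/\Delta$ must be written as a polynomial in the generators using that $\Delta\in\Q[T_4,T_8,T_{12},T_{16},T_{20}]$ from \eqref{23.02.2019}. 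I would therefore conclude by: (i) writing $\Delta$ as an explicit polynomial in the $T_i$ (already noted in \S\ref{s3}), (ii) exhibiting $1/\D$ as a polynomial in the $Q$'s via an explicit algebraic identity, and (iii) combining (i),(ii) to see that the localization generated by the $153$ functions already contains $\frac{1}{t_5\Delta\D}$, hence equals the full coordinate ring of $\Tf$; this, together with the injectivity above, gives the closed embedding $\Tf\hookrightarrow\mb{C}^{153}$. The delicate point will be keeping the degrees consistent throughout (iii) so that every auxiliary expression introduced is honestly degree $0$.
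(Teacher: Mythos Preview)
Your approach has a genuine gap: you aim to prove the map is a \emph{closed} embedding by showing the $153$ functions generate all of $\mathcal{O}_\Tf$, but this cannot succeed. Since $t_5\ne 0$ on $\Tf$, the function $T_{20}^{-1}=\delta^5/t_5^2$ lies in $\mathcal{O}_\Tf$, yet every polynomial in the $153$ listed generators is of the form (polynomial in $t_m,s_{ij}$)$/\delta^N$ and therefore never acquires $t_5$ in its denominator. Your steps (ii) and (iii) thus ask for something impossible, and the phrase ``$1/T_{20}$ is handled on that locus separately'' is precisely where the argument collapses. The same obstruction applies to $\Delta^{-1}$.

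The paper's proof is both shorter and structurally different. It factors the map as $\Tf\hookrightarrow U\hookrightarrow\mb{C}^{153}$, where $U$ is the affine open $\{\delta\ne 0\}$ in $\P^{4,6,8,10,3,3,3,3,1,1,1,1}$. The first arrow is the open immersion coming from the additional conditions $t_5\Delta\ne 0$; the second is the closed embedding obtained by showing that the listed functions generate $\mathcal{O}(U)$, not $\mathcal{O}(\Tf)$. That generation statement is a direct combinatorial degree argument: a monomial $\delta^{-n}\prod_m t_m^{n_m}\prod_{i,j} s_{ij}^{n_{ij}}$ has degree zero iff
\[
4n=4n_2+6n_3+8n_4+10n_5+3\textstyle\sum_i n_{i1}+\sum_i n_{i2},
\]
and after reducing to $0\le n_m\le 1$, $0\le n_{ij}\le 2$ one splits the solutions into six cases according to which of the three blocks $(t$-part, $s_{i1}$-part, $s_{i2}$-part$)$ contribute; these cases yield exactly the six families $T$, $Q_{i_1i_2}$, $Q_{i_1i_2i_3i_4}$, $P_{i_1i_2i_3i_4}$, $U$, $V$. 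No inversion of $t_5$ or $\Delta$ is ever needed. So the proposition's ``embedding'' is a locally closed immersion, and your separate injectivity discussion, while not wrong, becomes superfluous once one has generators of $\mathcal{O}(U)$.
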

Note that via the equation (\ref{s-eq5}), we have: 
\[T_4 = 4(Q_{24} - Q_{42} + Q_{13} - Q_{31})\] and 
\[
Q_{21} - Q_{12}  =1,
\]
and so we do not need two functions in \eqref{newvariables} which consists of $153+2$ functions.  
\begin{proof}
Let $U$ be the open affine subvariety of $\mb{P}^{4,6,8,10, 3,3,3,3,1,1,1,1}$, 
which is the complement of $\delta = 0$. By the definition of $\Tf$, there is an embedding 
$\Tf \to U$. We just want to find enough invariants under the action of $\mb{G}_m$ generating the coordinate functions on $U$. According to the degrees of variables $t_m, s_{ij}$, 
if $\delta^{-n}\prod^{5}_{m = 2}t^{n_m}_m \prod_{1 \leq i \leq 4, 1 \leq j \leq 2} s_{ij}^{n_{ij}}$ is a function on $U$, then we need to make sure that:
\begin{equation} \label{generator degfun}
4n = 4n_2 + 6n_3 + 8n_4 + 10n_5 + 3 \sum^4_{i = 1} n_{i1} + \sum^4_{i = 1} n_{i2}.
\end{equation}
Moreover, for generators of the ring of regular functions on $U$, we can assume that
$0 \leq n_m \leq 1, 0 \leq n_{ij} \leq 2$ for $2 \leq m \leq 5$ and $1 \leq i \leq 4, 1 \leq j \leq 2$. For the solutions of (\ref{generator degfun}), we may divide into the following types:
\begin{itemize}
\item $4n = 4n_2 + 6n_3 + 8n_4 + 10n_5$;
\item $4n = 3 \sum^4_{i = 1} n_{i1} + \sum^4_{i = 1} n_{i2}$, where $n_{ij} \neq 0$;
\item $4n = \sum^4_{i = 1} n_{i2}$;
\item $4n = 3\sum^4_{i = 1} n_{i2}$;
\item $4n = 6n_3 + 10n_5 + \sum^4_{i = 1} n_{i2}$;
\item $4n = 6n_3 + 10n_5 + 3 \sum^4_{i = 1} n_{i1}$.
\end{itemize}
In each case, we will get the solutions as stated in the proposition respectively.
\end{proof}

\subsection{Algebraic group $\BG$} \label{alggp}
The algebraic group $\BG$
$$
\BG= \left\{
\left(\begin{matrix}
k & k^{'} \\
0 & k^{-\mr{tr}}
\end{matrix} \right) 
\in GL(4, \mb{C}) \Bigg| k(k^{'})^{\mr{tr}}=(k(k^{'})^{\mr{tr}})^{\mr{tr}} \right\} \subset \mr{Sp}(4, \mb{C})
$$
acts from the left on $\Tf$ by the base change:
\[
\Tf \times \BG \to  \Tf, t =((Y, [0:1:0], \alpha), \mbf{g}) \to t \bullet \mbf{g} = (Y_t, [0:1:0], 
\alpha\cdot \mbf{g}),
\]
where we regard $\alpha$ as a $1\times 4$ matrix and  $\alpha\cdot \mbf{g}$ is the usual multiplication of matrices. 
The algebraic group $\BG$ is also called a Siegel parabolic subgroup of 
$\mr{Sp}(4,\mb{C})$. 
For $\mbf{g} = \begin{bmatrix}
k & k^{'} \\
0 & k^{-\mr{tr}} \end{bmatrix}$, we have  
$\delta(t \bullet \mbf{g}) = \det(k) \cdot \delta(t)$. From this we get 
the functional equations for $T_i$:
\begin{equation}
\label{14april2019}
T_i(t \bullet \mbf{g}) = (\det k)^{-\frac{i}{4}}T_i(t),\ \ t\in\Tf. 
\end{equation}
The functional equations of other regular functions on $\Tf$ are not as short as \eqref{14april2019} and we do not reproduce them here.

\section{Modular vector fields}
In this section we describe three vector fields on $\Tf$ which are algebraic incarnation of the derivations/vector fields  $\frac{\partial}{\partial \tau_i},\ \ i=1,2,3$. 
\label{s4}
\begin{theorem}
There are unique global vector fields $\Ra_k,\ \ 1 \leq  k \leq 3$ on $\Tf$ such that
\begin{equation}  \label{vector field}
\nabla_{\Ra_k} \left(\begin{matrix}
\alpha_1 \\
\alpha_2 \\
\alpha_3 \\
\alpha_4
\end{matrix}\right) = \left(\begin{matrix}
0 & C_{k} \\
0 & 0
\end{matrix}\right)\left(\begin{matrix}
\alpha_1 \\
\alpha_2 \\
\alpha_3 \\
\alpha_4
\end{matrix}\right),
\end{equation}
where $C_{1} = \left(\begin{matrix}
1 & 0 \\
0 & 0
\end{matrix}\right), C_{2} = \left(\begin{matrix}
0 & 0 \\
0 & 1
\end{matrix}\right), C_{3} =\left(\begin{matrix}
0 & 1 \\
1 & 0
\end{matrix}\right)$. Here, $\nabla$ is the Gauss-Manin connection of the family of genus two curves over $\Tf$. 
\end{theorem}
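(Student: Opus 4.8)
The plan is to translate the defining condition (\ref{vector field}) into an explicit system of equations on the moduli space $\Tf$, using the coordinate description $\alpha = S\omega$ and the Gauss-Manin connection matrix $B$ computed in \S\ref{s1}. First I would write out the left-hand side: since $\alpha = S\omega$ and $S$ depends on the point of $\Tf$ while $\omega$ is the fixed frame pulled back from $V$, we have $\nabla_{\Ra_k}\alpha = (\Ra_k S)\,\omega + S\,\nabla_{\Ra_k}\omega = (\Ra_k S)\,\omega + S\,(B(\Ra_k))\,\omega$, where $B(\Ra_k)$ denotes contraction of the matrix-valued $1$-form $B$ with the vector field $\Ra_k$. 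On the other hand the right-hand side is $\begin{pmatrix} 0 & C_k \\ 0 & 0\end{pmatrix} S\,\omega$. Equating coefficients of $\omega$ (which is a frame, hence the identification is legitimate) gives the matrix equation
\begin{equation}
\label{mvf-master}
(\Ra_k S) + S\,B(\Ra_k) = \begin{pmatrix} 0 & C_k \\ 0 & 0\end{pmatrix} S.
\end{equation}
Here $\Ra_k$ is to be thought of as an unknown derivation of the coordinate ring of $\Tf$, equivalently as an unknown vector field; writing $\Ra_k = \sum_{m} \dot t_m \frac{\partial}{\partial t_m} + \sum_{ij}\dot s_{ij}\frac{\partial}{\partial s_{ij}}$, equation (\ref{mvf-master}) becomes a linear system for the coefficient functions $\dot t_m$ and $\dot s_{ij}$ with coefficients that are regular functions on $\Tf$ (the entries of $B$ and $S$, and $1/\Delta$, $1/\delta$).

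Next I would argue existence and uniqueness. The key structural point is that $S$ is invertible (on $\Tf$ we inverted $\delta = \det S_1$, and the full $\det S$ is also a unit there — indeed $\det S = \det S_1 \det S_4$ and $S_4$ is pinned down by (\ref{matrixcon1}) as $S_4^{\mr{tr}} = \Omega_2^{-1}S_1^{-1}$ up to the cup-product constraints, so $\det S_4$ is a unit). Hence (\ref{mvf-master}) can be solved for $\Ra_k S$ uniquely:
\begin{equation}
\label{mvf-solved}
\Ra_k S = \begin{pmatrix} 0 & C_k \\ 0 & 0\end{pmatrix} S - S\,B(\Ra_k),
\end{equation}
but of course $B(\Ra_k)$ itself involves the unknown vector field, so this is really a fixed-point/consistency condition. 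The clean way to see that a unique solution exists is to note that $\Ra_k$ is determined by its action on the coordinate functions $t_m$ and $s_{ij}$; the entries of (\ref{mvf-solved}) that involve only the $t$-block derivatives come from the block structure of $B$ and from $C_k$ having the special upper-triangular form $\begin{pmatrix}0 & C_k\\ 0 & 0\end{pmatrix}$, which lies in the Lie algebra of the parabolic $\BG$ and in particular preserves $F^1$. Concretely, because the right-hand side of (\ref{vector field}) has zero lower block, the $F^1$-part of the connection in the $\alpha$-frame is unchanged, which forces $\Ra_k$ to move the $t$-variables in exactly the way dictated by $B^{-1}$; this is precisely why the inverse Gauss-Manin matrices $B^{-1}_i$ were recorded in \S\ref{s1}. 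Solving, one expresses $\dot t_m$ as $\mb{Q}$-linear combinations of entries of $S$ via the $B^{-1}$ blocks, and then $\dot s_{ij}$ are determined by the remaining entries of (\ref{mvf-solved}). One must then check that the resulting vector field is tangent to $\Tf$, i.e. annihilates the defining relation (\ref{s-eq5}) $s_{42}s_{21}-s_{41}s_{22}+s_{32}s_{11}-s_{31}s_{12}-\tfrac{t_2}{4}$ and is compatible with the $\mb{G}_m$-quotient (it should be homogeneous of weight $0$, or of the appropriate weight, under (\ref{ta})); both are consequences of the fact that $\Phi = S\Omega S^{\mr{tr}}$ is preserved, since $\begin{pmatrix}0 & C_k\\ 0&0\end{pmatrix}$ is in $\mf{sp}(4)$ with the property $\begin{pmatrix}0&C_k\\0&0\end{pmatrix}\Phi + \Phi\begin{pmatrix}0&C_k\\0&0\end{pmatrix}^{\mr{tr}} = 0$ (using $C_k$ symmetric), so differentiating $\Phi = S\Omega S^{\mr{tr}}$ along $\Ra_k$ is automatically consistent with $d\Omega = B\Omega + \Omega B^{\mr{tr}}$.

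I expect the main obstacle to be purely computational rather than conceptual: one has to verify that the vector field produced by solving the linear algebra is actually \emph{global and regular} on $\Tf$, i.e. that no spurious poles appear beyond the locus $t_5\Delta\delta = 0$ already inverted. A priori $B$ has poles along $\Delta = 0$ and inverting $S$ introduces $1/\delta$, so the expressions for $\dot t_m, \dot s_{ij}$ are manifestly regular on $\Tf$; the subtlety is only to confirm the count works out so that the three vector fields $\Ra_1,\Ra_2,\Ra_3$ are well-defined derivations of the given coordinate ring — this is where the explicit simple form of $B^{-1}_2, B^{-1}_3, B^{-1}_4, B^{-1}_5$ from \S\ref{s1} is essential, as the $t$-block of the answer is read directly off those matrices. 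Uniqueness is immediate from (\ref{mvf-solved}): any two solutions differ by a vector field $Z$ with $\nabla_Z\alpha = 0$, hence $\nabla_Z\omega = 0$ (as $S$ is invertible and constant-free only in the $\omega$-direction), hence $Z = 0$ since the Gauss-Manin connection on the family over $V$ has no horizontal vector fields other than zero in the base directions being used — more precisely, $Z$ acts trivially on all $t_m$ (from the $F^1$-block) and then trivially on all $s_{ij}$ (from the rest), so $Z = 0$ as a derivation of the coordinate ring.
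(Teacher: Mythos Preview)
Your approach is the paper's: derive the master equation $\Ra_k S + S\,B(\Ra_k) = \check C_k S$, use the upper-right $2\times 2$ block to solve for the $t$-components, then read the $s$-components off the remaining blocks. The paper executes this by explicit computation (the full formulas appear in an appendix).

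Your uniqueness argument has a genuine gap. The implication ``$\nabla_Z\alpha=0$ hence $\nabla_Z\omega=0$'' is false: from $\alpha=S\omega$ one gets $\nabla_Z\omega = -S^{-1}(ZS)\,\omega$, not zero. What actually happens is that the upper-right block gives $\sum_m Z(t_m)\,B_{m,2}=0$, and the paper observes this system has a one-dimensional kernel spanned by the Euler direction $(2t_2,3t_3,4t_4,5t_5)$. So on the affine cone the solution is \emph{not} unique; uniqueness on $\Tf$ holds only because $\Tf$ is the weighted-projective quotient and the Euler field descends to zero there. Your claim that ``$Z$ acts trivially on all $t_m$'' is therefore incorrect as stated.

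Your tangency argument---differentiate $\Phi=S\Omega S^{\rm tr}$ along $\Ra_k$, use $d\Omega=B\Omega+\Omega B^{\rm tr}$ and the master equation, and reduce to $\check C_k\Phi+\Phi\check C_k^{\rm tr}=0$, which holds since $C_k=C_k^{\rm tr}$---is cleaner than the paper's, which simply computes $dF(\Ra_k)/F$ explicitly and exhibits it as a regular function. On the other hand, your claim that regularity is ``manifest'' from the poles of $B$ glosses over the source of the $1/t_5$ factor: it arises not from $B$ (whose poles are on $\Delta=0$) but from inverting the $4\times4$ matrix of $B_{m,2}$-entries. Since $t_5$ is already inverted on $\Tf$ this does no harm, but it is not the mechanism you describe.
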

\begin{proof}
Our proof actually  computes $\Ra_k,\ \ k=1,2,3$ and it is as follows. For the relevant computer code see Appendix \ref{03o2019}. Let us first consider a vector field 
\[
\Ra_k = \sum_{m = 2}^5 \Ra_{k, m} \frac{\partial}{\partial t_m} + \sum_{1 \leq i \leq 4, 1 \leq j \leq 2} \Ra_{k, ij} \frac{\partial}{\partial s_{ij}} ,
\]
with unknown coefficients $\Ra_{k,m}$ and $\Ra_{k,ij}$. We have 
\begin{equation} \nonumber
\nabla_{\Ra_k}  \alpha = \nabla_{\Ra_k} (S \omega)  = (\nabla_{\Ra_k} S) \omega + S \A(\Ra_k) \omega = (\mr{d}S(\Ra_k) + 
S\A(\Ra_k))S^{-1}\alpha.
\end{equation}
The equation (\ref{vector field}) is equivalent to
\begin{equation} \label{vector field equ}
\mr{d}S(\Ra_k) + S\A(\Ra_k) =\check C_k S,
\end{equation} 
where $\check C_k = \left(\begin{matrix}
0 & C_k \\
0 & 0
\end{matrix}\right)$.
Recall that we use the notation
$S = \left( \begin{matrix}
S_1 & 0 \\
S_3 & S_4
\end{matrix} \right)$ and
$B_m = \left( \begin{matrix}
B_{m, 1} & B_{m, 2} \\
B_{m, 3} & B_{m, 4}
\end{matrix} \right)$ for $2 \leq m \leq 5$.
From the equation (\ref{vector field equ}), we get an equation:
\begin{equation} \label{matrix equ for vector field}
\left(\begin{matrix}
\Ra_k(S_1) & 0 \\
\Ra_k(S_3) & \Ra_k(S_4)
\end{matrix}\right) + \left( \begin{matrix}
S_1 & 0 \\
S_3 & S_4
\end{matrix} \right) \sum^5_{m=2} \left( \begin{matrix}
B_{m, 1} & B_{m, 2} \\
B_{m, 3} & B_{m, 4}
\end{matrix} \right)\Ra_{k,m} = \left(\begin{matrix}
0 & C_k \\
0 & 0
\end{matrix}\right)\left( \begin{matrix}
S_1 & 0 \\
S_3 & S_4
\end{matrix} \right).
\end{equation}
Considering the right-upper $2 \times 2$ matrices, we get its equivalent form:
\begin{equation}
\sum^5_{m=2} \Ra_{k,m} \cdot B_{m ,2} = S_1^{-1} C_k S_4.
\end{equation}
From the equation (\ref{matrixcon1}), the right-hand side of the above equation can be transformed into:
\begin{equation}
\begin{split}
S_1^{-1} C_k S_4 & = S_1^{-1} C_k S_1^{-\mr{tr}} (\Omega_2)^{-\mr{tr}} \\
& = \frac{1}{(\det S_1)^2}\left( \begin{matrix}
s_{22}^2 & -s_{21}s_{22} \\
-s_{22}s_{21} & s_{21}^2
\end{matrix} \right) (\Omega_2)^{-\mr{tr}} \\
& = \frac{1}{(\det(S_1))^2} \left( \begin{matrix}
-\frac{1}{4}s_{21}s_{22} & \frac{3}{4} s_{22}^2 \\
\frac{1}{4}s_{21}^2 & - \frac{3}{4} s_{21}s_{22}
\end{matrix} \right),
\end{split}
\end{equation}
for $k=1$. Similarly we do the computation for $k=2,3$ and define: 
\begin{equation}
\begin{bmatrix}
s_{1, -4} \\
s_{1, -6} \\
s_{1, -2} \\
\end{bmatrix}=
\frac{1}{(\det(S_1))^2} \left( \begin{matrix}
&-\frac{1}{4}s_{21}s_{22} \\
& \frac{3}{4} s_{22}^2 \\
& \frac{1}{4}s_{21}^2 
\end{matrix} \right),
\ \ \ 
\begin{bmatrix}
s_{2, -4} \\
s_{2, -6} \\
s_{2, -2} 
\end{bmatrix}=
\frac{1}{(\det(S_1))^2} \left( \begin{matrix}
& -\frac{1}{4} s_{11}s_{12} \\
& \frac{3}{4} s_{12}^2  \\
& \frac{1}{4} s_{11}^2 
\end{matrix} \right),
\end{equation}
and
\begin{equation}
\begin{bmatrix}
s_{3, -4} \\
s_{3, -6} \\
s_{3, -2} 
\end{bmatrix} =
\frac{1}{(\det(S_1))^2} \left( \begin{matrix}
&\frac{1}{4}(s_{12}s_{21}+s_{11}s_{22} )\\
& - \frac{3}{2} s_{12}s_{22} \\
& - \frac{1}{2}s_{11}s_{21} \\
\end{matrix} \right).
\end{equation}
The existence of $\Ra_k$ depends on the solution of 
\begin{equation} \label{gamma-matrix}
\left(\begin{matrix}
b^2_{13} & b^3_{13} & b^4_{13} & b^5_{13} \\
b^2_{14} & b^3_{14} & b^4_{14} & b^5_{14} \\
b^2_{23} & b^3_{23} & b^4_{23} & b^5_{23} \\
b^2_{24} & b^3_{24} & b^4_{24} & b^5_{24} 
\end{matrix}\right) \left(\begin{matrix}
\Ra_{k,2} \\
\Ra_{k,3} \\
\Ra_{k,4} \\
\Ra_{k,5}
\end{matrix}\right) = 
\left(\begin{matrix}
s_{k, -4} \\
s_{k, -6} \\
s_{k, -2} \\
3s_{k, -4}
\end{matrix}\right).
\end{equation}
The vector $[2t_2,3t_3,4t_4,5t_5]^{\tr}$ is in the kernel of the above 
$4\times 4$ matrix.
This also follows from the fact that  the Euler vector field $\sum_{m=2}^5 mt_m \frac{\partial}{\partial t_m}$ induces the zero vector field in the weighted projective space $\P^{4,6,8,10,3,3,3,3,1,1,1,1}$.  The solution of (\ref{gamma-matrix}) up to addition of the Euler vector field is unique and it
is given by
\begin{equation} \nonumber
\begin{split}
\Ra_{k, 2} = &\frac{1}{75t_5}
((24t_{2}^{2}t_{4}+450t_{3}t_{5})s_{k, -2} \\+ &(-180t_{2}^{2}t_{5}+ 36t_{2}t_{3}t_{4} + 600t_{4}t_{5} ) s_{k, -4} 
 +  (-40t_{2}t_{3}t_{5}+  16t_{2}t_{4}^{2} +250t_{5}^{2})s_{k, -6}),
\end{split}
\end{equation}
\begin{equation} \nonumber
\begin{split}
\Ra_{k, 3} = &\frac{1}{75t_5}
((-180t_{2}^{2}t_{5}+36t_{2}t_{3}t_{4}+600t_{4}t_{5})s_{k, -2} \\
+ &(-390t_{2}t_{3}t_{5} + 54t_{3}^{2}t_{4}+750t_{5}^{2})s_{k, -4} 
+ (-20t_{2}t_{4}t_{5}-60t_{3}^{2}t_{5}+24t_{3}t_{4}^{2})s_{k, -6}),
\end{split}
\end{equation}
\begin{equation} \nonumber
\begin{split}
\Ra_{k, 4} = &\frac{1}{75t_5}
((-120t_{2}t_{3}t_{5}+48t_{2}t_{4}^{2}+750t_{5}^{2})s_{k, -2} \\
+&(-60t_{2}t_{4}t_{5}-180t_{3}^{2}t_{5}+72t_{3}t_{4}^{2})s_{k, -4} 
+(150t_{2}t_{5}^{2}-110t_{3}t_{4}t_{5} + 32t_{4}^{3})s_{k, -6}),
\end{split}
\end{equation}
\begin{equation} \nonumber
\Ra_{k, 5} =  0.
\end{equation}
 Despite the fact that the Gauss-Manin connection matrix $B$ is not simple, we can compute 
 the matrices $E_{-2}, E_{-4}, E_{-6}$ through the equality
\[
E(k):=\sum^5_{m = 2} \Ra_{k, m} B_m =  s_{k, -2} E_{-2} + s_{k, -4} E_{-4} + s_{k, -6} E_{-6} 
\]
for $k = 1, 2, 3$,
and they have rather simple expressions:
$$
E_{-2} = \left(
\begin{array}{*{4}{c}}
-\frac{6t_{2}t_{4}}{25t_5} & -1 & 0 & 0 \\
\frac{4}{5} t_{2} & -\frac{2t_{2}t_{4}}{25t_5} & 1 & 0 \\
0 & \frac{8}{5} t_{2} & \frac{2t_{2}t_{4}}{25t_5} & 3 \\
-t_{4} & -2t_{3} & -\frac{3}{5}t_{2} & \frac{6t_{2}t_{4}}{25t_5}
\end{array}
\right),
$$
$$
E_{-4} = \left(
\begin{array}{*{4}{c}}
(\frac{4}{5}t_{2}-\frac{9t_{3}t_{4}}{25t_5}) & 0 & 1 & 0 \\
\frac{6}{5}t_3 & (\frac{8}{5}t_{2}-\frac{3t_{3}t_{4}}{25t_5}) & 0 & 3 \\
-t_{4} & \frac{2}{5}t_{3} & -(\frac{3}{5}t_{2}-\frac{3t_{3}t_{4}}{25t_5}) & 0 \\
-2t_{5} & -3t_{4} & -\frac{2}{5}t_{3} & -(\frac{9}{5}t_{2} -\frac{9t_{3}t_{4}}{25t_5})
\end{array}
\right),
$$
$$
E_{-6} = \left(
\begin{array}{*{4}{c}}
(\frac{2}{5}t_{3}-\frac{4t_{4}^{2}}{25t_5}) & \frac{1}{3}t_{2} & 0 & 1 \\
\frac{1}{5}t_{4} & (\frac{2}{15}t_{3}-\frac{4t_{4}^{2}}{75t_5}) & 0 & 0 \\
-\frac{2}{3}t_{5} & \frac{1}{15}t_{4} & -(\frac{2}{15}t_{3}-\frac{4t_{4}^{2}}{75t_5}) & 0 \\
0 & -\frac{4}{3}t_{5} & -\frac{1}{15}t_{4} & -(\frac{2}{5}t_{3}-\frac{4t_{4}^{2}}{25t_5})
\end{array}
\right).
$$
Putting these solutions back to (\ref{matrix equ for vector field}), we get $\Ra_{k, ij}$:
\begin{eqnarray*}
\begin{bmatrix}
dS_1(\Ra_k) \\
dS_3(\Ra_k)
\end{bmatrix}
&=&  
\begin{bmatrix}
C_k S_3 \\
0
\end{bmatrix} - \begin{bmatrix}
S_1(\sum_{m = 2}^5 \Ra_{k, m} B_{m,1}) \\
S_3(\sum_{m = 2}^5 \Ra_{k, m} B_{m,1}) + S_4(\sum_{m = 2}^5 \Ra_{k, m} B_{m,3}) 
\end{bmatrix} 
\\
&= &
\begin{bmatrix}
 C_kS_3-S_1E(k)_1 \\
 -S_3E(k)_1-S_4E(k)_3
\end{bmatrix},
\end{eqnarray*}
where we have used our convention $E(k)=\begin{bmatrix}
E(k)_{1} & E(k)_{2} \\
E(k)_{3} & E(k)_{4}
\end{bmatrix}$. 
It remains to prove that $\Ra_k$ is tangent to the loci $F=0$, where $F$ is the polynomial \eqref{s-eq5}.
This follows from the equations:
\begin{equation} \nonumber
\frac{dF(\Ra_1)}{F}= \frac{1}{25t_5\delta^2}(2t_2t_4s_{21}^2+15t_2t_5s_{21}s_{22}-3t_3t_4s_{21}s_{22}-10t_3t_5s_{22}^2+4t_4^2s_{22}^2),
\end{equation} 
\begin{equation} \nonumber
\frac{dF(\Ra_2)}{F}= \frac{1}{25t_5\delta^2}(2t_2t_4s_{11}^2+15t_2t_5s_{11}s_{12}-3t_3t_4s_{11}s_{12}-10t_3t_5s_{12}^2+4t_4^2s_{12}^2),
\end{equation} 
\begin{equation} \nonumber
\begin{split}
\frac{dF(\Ra_3)}{F}= & \frac{1}{25t_5\delta^2}(-4t_2t_4s_{11}s_{21}-15t_2t_5s_{11}s_{22}-15t_2t_5s_{12}s_{21} + 3t_3t_4s_{11}s_{22}  + \\ &3t_3t_4s_{12}s_{21}+20t_3t_5s_{12}s_{22}-8t_4^2s_{12}s_{22}).
\end{split}
\end{equation} 
\end{proof}

All $\Ra_k$'s are tangent to $\Delta=0$ and this follows from the computation:
\begin{equation}
\frac{\mr{d} \Delta}{\Delta}(\Ra_k) = 
\frac{16t_{2}t_{4}}{5t_{5}}\cdot s_{k, -2} - \frac{60t_{2}t_{5}-24t_{3}t_{4}}{5t_{5}}\cdot s_{k, -4}-\frac{60t_{3}t_{5}-32t_{4}^{2}}{15t_{5}}\cdot s_{k, -6}.
\end{equation}


Let us define 
$$
H_k:= S_1^{-1}C_k S_3,\ \ \ 
J_k:= S_1^{-1} C_k S_4=
\begin{bmatrix}
s_{k,-4} & s_{k,-6}\\ s_{k,-2} & 3s_{k,-4} 
\end{bmatrix}
,\ \ \ k = 1, 2, 3.
$$
The following proposition might be useful for a better understanding of $\Ra_k$'s.  
\begin{prop}
We have the following equations:
\begin{eqnarray*}
\Ra_{\check{k}}(J_k) &=&  H_{\check{k}}J_k -E(\check{k})_1J_k - H_k E(\check{k})_2 - J_k E(\check{k})_4,
 \\
 \Ra_{\check{k}}(H_k) &=& 
 H_{\check{k}}H_k - E(\check{k})_1H_k - H_k E(\check{k})_1 - J_k E(\check{k})_3,
\end{eqnarray*}
for $\check{k} = 1, 2, 3.$
\end{prop}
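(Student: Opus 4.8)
The plan is to obtain both identities by a single direct differentiation, using the explicit description of how the vector fields $\Ra_{\check k}$ act on the frame matrix $S$ that was already extracted in the proof of the modular vector field theorem. Recall that equation \eqref{matrix equ for vector field} is precisely the block form of the matrix identity $\Ra_{\check k}(S)=\check C_{\check k}S-S\,E(\check k)$, so reading off its four $2\times 2$ blocks gives
\[
\Ra_{\check k}(S_1)=C_{\check k}S_3-S_1E(\check k)_1,\qquad
\Ra_{\check k}(S_3)=-S_3E(\check k)_1-S_4E(\check k)_3,\qquad
\Ra_{\check k}(S_4)=-S_3E(\check k)_2-S_4E(\check k)_4,
\]
with the block decomposition $E(\check k)=\left[\begin{smallmatrix}E(\check k)_1 & E(\check k)_2\\ E(\check k)_3 & E(\check k)_4\end{smallmatrix}\right]$ used above (the first two identities appear explicitly there, the third is the lower-right block of the same equation). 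First I would record the companion formula $\Ra_{\check k}(S_1^{-1})=-S_1^{-1}\,\Ra_{\check k}(S_1)\,S_1^{-1}$; substituting the expression for $\Ra_{\check k}(S_1)$ and recognising $S_1^{-1}C_{\check k}S_3=H_{\check k}$, this collapses to $\Ra_{\check k}(S_1^{-1})=\bigl(E(\check k)_1-H_{\check k}\bigr)S_1^{-1}$.

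Next, since $C_k$ is a constant matrix, I would apply the Leibniz rule to the defining products $J_k=S_1^{-1}C_kS_4$ and $H_k=S_1^{-1}C_kS_3$:
\[
\Ra_{\check k}(J_k)=\Ra_{\check k}(S_1^{-1})\,C_kS_4+S_1^{-1}C_k\,\Ra_{\check k}(S_4),\qquad
\Ra_{\check k}(H_k)=\Ra_{\check k}(S_1^{-1})\,C_kS_3+S_1^{-1}C_k\,\Ra_{\check k}(S_3).
\]
Plugging in the three block formulas and the expression for $\Ra_{\check k}(S_1^{-1})$, every resulting summand is a product in which $C_k$ carries $S_1^{-1}$ on its left and either $S_3$ or $S_4$ on its right; regrouping each such cluster as $S_1^{-1}C_kS_3=H_k$ or $S_1^{-1}C_kS_4=J_k$, while keeping $H_{\check k}$ and $E(\check k)_1$ to the left of the cluster and $E(\check k)_2,E(\check k)_3,E(\check k)_4$ to its right, the terms assemble into the two displayed equations of the proposition. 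As an optional sanity check one can note the identity $E(\check k)_2=J_{\check k}$ (read off from the upper-right block of \eqref{matrix equ for vector field}, or from the explicit $E_{-2},E_{-4},E_{-6}$ together with the matrix form of $J_{\check k}$), which lets one rewrite the term $H_kE(\check k)_2$ as $H_kJ_{\check k}$; it is not needed.

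I do not anticipate a genuine obstacle: the statement is a formal consequence of the defining relation \eqref{vector field} for the $\Ra_{\check k}$ together with the Leibniz rule. The only care needed is strict bookkeeping — the minus sign in $\Ra_{\check k}(S_1^{-1})=-S_1^{-1}\Ra_{\check k}(S_1)S_1^{-1}$, and, since none of the $2\times 2$ blocks commute, keeping each of $H_{\check k},H_k,J_k$ and the $E(\check k)_i$ on its correct side in every product, as transposing two factors would change the right-hand side. Everything else is routine.
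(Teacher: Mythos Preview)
Your approach is exactly the paper's: apply the Leibniz rule to $J_k=S_1^{-1}C_kS_4$ and $H_k=S_1^{-1}C_kS_3$, feed in the block identities for $\Ra_{\check k}(S_1),\Ra_{\check k}(S_3),\Ra_{\check k}(S_4)$ extracted from \eqref{matrix equ for vector field}, and regroup each $S_1^{-1}C_k S_3$ or $S_1^{-1}C_k S_4$ as $H_k$ or $J_k$.

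One caution on the bookkeeping you yourself flag. You correctly write $\Ra_{\check k}(S_1^{-1})=-S_1^{-1}\Ra_{\check k}(S_1)S_1^{-1}=(E(\check k)_1-H_{\check k})S_1^{-1}$; the paper's printed proof drops this minus sign. If you actually carry your (correct) sign through, the first two terms come out as $-H_{\check k}J_k+E(\check k)_1J_k$ (and similarly $-H_{\check k}H_k+E(\check k)_1H_k$), i.e.\ with the opposite signs to the displayed proposition. So when you write ``the terms assemble into the two displayed equations'' you are glossing over a genuine sign discrepancy: either the proposition as printed carries a matching typo, or you should revisit that step rather than assert agreement. The method is right; just do not let the final regrouping hide the sign.
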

\begin{proof}
We only check the first equation. Similarly, one can verify the other. 
\begin{eqnarray*}
\Ra_{\check{k}}(J_k) &=& 
\Ra_{\check{k}}( S_1^{-1} C_k S_4) = \Ra_{\check{k}}(S^{-1}_1)C_kS_4 + S^{-1}_1 C_k \Ra_{\check{k}}(S_4) 
\\
&=& 
S_1^{-1} \Ra_{\check{k}}(S_1) S_1^{-1} C_k S_4 + S^{-1}_1 C_k \Ra_{\check{k}}(S_4) 
\\
&=& 
S_1^{-1}(C_{\check{k}}S_3 - \sum_m \Ra_{\check{k}, m}S_1B_{m, 1})S_1^{-1}C_kS_4 + 
\\
& &
S_1^{-1}C_k(- \sum_m \Ra_{\check{k}, m}(S_3 B_{m, 2} + S_4B_{m, 4})) 
\\
& \stackrel{\eqref{matrix equ for vector field}}{=}& 
S_1^{-1}C_{\check{k}}S_3J_k - \sum_m \Ra_{\check{k}, m}(B_{m, 1}J_k + (S_1^{-1}C_kS_3)B_{m, 2} + J_kB_{m, 4}) 
\\
&=& 
H_{\check{k}}J_k - \sum_m \Ra_{\check{k}, m}(B_{m, 1}J_k + H_kB_{m, 2} + J_kB_{m, 4}) 
\\
&=&
 H_{\check{k}}J_k -E(\check{k})_1J_k - H_k E(\check{k})_2 - J_k E(\check{k})_4.
\end{eqnarray*}
\end{proof}

\subsection{The description of $\Ra_k$'s in an affine chart}
\label{JH2019}
Let $f$ be a homogeneous polynomial in $\mb{C}[x_0, \cdots, x_n]$ with $\deg x_i = \alpha_i\in\N$ and $\P^\alpha:=\mr{Proj}(\mb{C}[x_0, \cdots, x_n])$ be the
weighted projective space.  The coordinate ring of $U:=\P^\alpha-\{f=0\}$ is given by the polynomial ring generated by $\frac{x_0^{a_0} \cdots x_n^{a_n} }{f^k}$
such that $\sum_{i=0}^n a_i \alpha_i = k \deg(f)$.
We find generators $X = (X_i)_{i = 1, \cdots, m}$ of the coordinate ring of 
$U$ and we have $\ml{O}(U) ={\spec}(\mb{C}[X_1, \cdots, X_m] / I)$, where $I$ is the ideal generated by the polynomials $P$ with variables in $X$ such that $P(X) = 0$. 
A vector field on $\mb{P}^{\alpha}$ is given by 
$v=\sum_{i = 0}^n v_i(x)\frac{\partial}{\partial x_i}$, 
where $v_i(x) = \frac{f_i}{g_i}$ can be written as the ratio of two homogeneous polynomials $f_i$ and $g_i$ such that $\deg f_i - \deg g_i = \alpha_i$.
Then we can write $\mr{d}X_i (v)$ in terms of $X_i$, denoted by $Q_i(X)$, and then we have:
\begin{equation}
v|_{U} = \sum_{i = 1}^m Q_i \frac{\partial}{\partial X_i}.
\end{equation}
This can be used to describe the vector fields $\Ra_k$'s in
the complement $U$ of $\delta = 0$ in $\mb{P}^{4,6,8,10,3,3,3,3,1,1,1,1}$. Note that we have already chosen coordinate functions for $U$ in \S\ref{affinechart}.

\section{The monodromy group of genus 2 hyperelliptic curves} 
\label{s6}
In this section, we want to compute the monodromy group of 
the moduli space of genus $2$ hyperelliptic curves with a marked Weierstrass point. 
\subsection{Picard-Lefschetz theory}
First we recall some general statements. We denote the moduli space of hyperelliptic curves of genus 
$g$ by $\ml{C}_g$. For each hyperelliptic curve $g$, we can associate it with a polynomial of degree $d$ with a non-vanishing discriminant such that the affine part of $Y$ is given by $y^2 = P(x)$. Then the genus of $Y$ is $g= [\frac{d-1}{2}]$. We fix a base point $b=0 \in \ml{C}_g$, which is corresponding to the hyperelliptic curve 
whose affine part is given by 
\begin{equation}
\label{12apr2019}
Y_b: y^2 = x^d+1.  
\end{equation}
We denote the monodromy map:
\[
\pi_1(\ml{C}_g, b) \to Aut(H_1(Y_b, \mb{Z}), \cdot) \cong \mr{Sp}(2g, \mb{Z})
\]
by $h$, where $\cdot$ denotes the intersection pairing.
\

We define $f: \mb{C}^2 \to \mb{C}$ by $f(x, y) = - y^2 + P(x)$. Then we let $\{p_1,p_2, \cdots, p_{d-1} \}$ be the critical points of $f$, i.e., $p_i = (a_i, 0)$ such that $P^{'}(a_i) = 0$. Moreover, we denote the critical values of $f$ by $\{c_1, c_2, \cdots, c_{d-1}\}$. One may choose a set of vanishing cycles 
$\delta_1,\delta_2, \cdots, \delta_{d-1}$ such that their intersection numbers are given by the Dynkin diagram of the polynomial $P(x)$, i.e.
\begin{equation}
\begin{split}
&\delta_1 \cdot \delta_2 =  - \delta_2 \cdot \delta_1 = \delta_2 \cdot \delta_3 = - \delta_3 \cdot \delta_2 =  \cdots \delta_{d-2} \cdot  \delta_{d-1} = - \delta_{d-1} \cdot  \delta_{d-2} = 1 \\
& \mathrm{otherwise} \ \ \ \delta_i \cdot \delta_j = 0,
\end{split}
\end{equation}
see for instance \cite[Chapter 7]{ho13}. 
Moreover if $d$ is odd, we can use linear combinations of $\delta_i$ to form a symplectic basis $e_i$ of $H_1(Y_b, \mb{Z})$ :
\begin{equation}
\label{12apr2019-2}
\begin{split}
& e_1 = \delta_1, e_2 = \delta_3, \cdots, e_{\frac{d-1}{2}} = \delta_{d-2}, \\
& e_{\frac{d+1}{2}} = \delta_2+\delta_4, e_{\frac{d+3}{2}} = \delta_4+\delta_6, \cdots, e_{d-2} = \delta_{d-3}+\delta_{d-1}, e_{d-1} = \delta_{d-1}.
\end{split}
\end{equation}
If $d$ is even, the similar $e_i$ consists of a symplectic basis of $H_1(Y_b, \mb{Z})$ and $\delta_1 + \delta_3 + \cdots + \delta_{d-1} =0$ in $H_1(Y_b, \mb{Z})$.
\

We recall the Picard-Lefschetz formula (see for instance in \cite[\S 6.6]{ho13}). Suppose that $f$ is a holomorphic map from a projective complex manifold $Y$ of dimension $n$ to the projective line $\mb{P}^1$. Also suppose that all critical points are non-degenerate (not necessarily in different fibers). We let $b$ be a regular value of $f$. Then the monondromy $h$ around the critical value $c_i$ is given by the Picard-Lefschetz formula
\begin{equation} \label{PL}
h(\delta) = \delta - \sum_j\langle \delta, \delta_j \rangle \delta_j, \ \ \delta \in H_{1}(Y_b),
\end{equation}
where $j$ runs through all the Lefschetz vanishing cycles in the singularities of $Y_{c_i}$ and $\langle \cdot, \cdot \rangle$ denotes the intersection number of two cycles in $Y_b$.

\subsection{Monodromy of genus two curves} 
\label{mono}
Now we assume $d= 5$. Then there are $4$ critical points $c_i$. For simplicity, we choose a symplectic basis of $H_1(Y_b, \mb{Z})$:
\begin{equation} \label{symbasisforfixedcurve}
e_1 = \delta_1,e_2 = \delta_3, e_3 = \delta_2 + \delta_4, e_4 =  \delta_4.
\end{equation}
Using the Picard-Lefschetz formula and the symplectic basis $\{e_i\} \in H_1(Y_b, \mb{Z})$, we get:
$$h_{c_1}(e_i) = e_i - \langle e_i, \delta_1 \rangle \delta_1.$$
Under the basis $(e_i)$, $h_{c_1}$ corresponds to the matrix
\[\left(\begin{matrix}
1 & 0 & 0 & 0 \\
0 & 1 & 0 & 0 \\
1 & 0 & 1 & 0 \\
0 & 0 & 0 & 1 
\end{matrix}\right).\]
Similarly we will get other $3$ matrices and hence the monodromy group of genus 
$2$ hyperelliptic curves with a Weierstrass point is generated by four generators of $\Gamma$ 
in (\ref{14.02.2109}). Let us call these matrices $A,B,C$ and $D$, respectively. 
Note that if we consider the moduli space of genus 2 hyperelliptic curves, then the similar discussion implies that $\mr{Sp}(4, \mb{Z})$ is generated by the above $4$ matrices together with
\begin{equation}
E = \left(\begin{matrix}
1 & 0 & 0 & 0 \\
0 & 1 & 0 & 0 \\
1 & 1 & 1 & 0 \\
1 & 1 & 0 & 1 
\end{matrix}\right).
\end{equation}
We let $\Gamma^0_2$ be the mapping class group associated with the orbifold fundamental group of  
the moduli space of nondegenerated Riemann surfaces of genus $2$. Then it is well know that $\Gamma^0_2$ has the following presentation:
\begin{equation}
\begin{split}
\mathrm{generators}: &\zeta_1, \cdots, \zeta_5, \\
\mathrm{relations}: & [\zeta_i, \zeta_j] = 1, \mid i-j\mid > 1 \\
& \zeta_i\zeta_{i+1}\zeta_i = \zeta_{i+1} \zeta_{i} \zeta_{i+1}, \\
& (\zeta_1\zeta_2\zeta_3\zeta_4\zeta_5)^6 = 1, \zeta^2 = 1, [\zeta, \zeta_i] = 1, \\
\end{split}
\end{equation}
where $\zeta = \zeta_1\zeta_2\zeta_3\zeta_4\zeta_5^2 \zeta_4 \zeta_3 \zeta_2 \zeta_1 $, see for instance  \cite[Page 577]{1994Brownstein}. Hence we may view $\Gamma^0_2$ as a natural quotient of the braid group $B_6$, whose generators are also denoted by $\zeta_i$. It also known that the map $\Gamma^0_2 \to \mr{Sp}(4, \mb{Z})$ is surjective and the image of $\zeta_1$ (resp $\zeta_2,\zeta_3,\zeta_4,\zeta_5$) is $A$ (resp $B, C, D, E$). Therefore, $A, B, C, D, E$ generate the whole group $\mathrm{Sp}(4, \mb{Z})$.
We let $\Gamma$ be the subgroup of $\mathrm{Sp}(4, \mb{Z})$ generated by $A, B, C, D$.
Note that $E^2 = (ABC)^4 \in \Gamma$. 
We denote the congruence group of $\mr{Sp}(4, \mb{Z})$ with level two by  $\Gamma(2)$, i.e., $\Gamma(2) = \mr{Ker}(\mr{Sp}(4, \mb{Z}) \to \mr{Sp}(4, \mb{Z}/2\mb{Z}))$.
\begin{prop}
$\Gamma$ contains the congruence group $\Gamma(2)$ as a subgroup. 
\end{prop}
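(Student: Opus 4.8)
The plan is to show that every generator of a known finite generating set of $\Gamma(2)$ can be written as a word in $A, B, C, D$ (and their inverses). First I would recall, or derive, an explicit finite set of generators for the principal congruence subgroup $\Gamma(2) \subset \mr{Sp}(4,\mb{Z})$; such a set is classical and consists of the elementary symplectic transvections $I + 2 E_{ij}^{\mathrm{symp}}$ attached to the standard symplectic root system, i.e.\ matrices of the form $I + 2N$ where $N$ runs over a basis of the Lie algebra $\mf{sp}(4)$ compatible with the chosen symplectic form \eqref{05oct2019}. Equivalently one may use the fact that $\Gamma(2)$ is the normal closure in $\mr{Sp}(4,\mb{Z})$ of the squares of the Picard--Lefschetz transvections; since $A, B, C, D$ are themselves transvections (coming from the vanishing cycles $\delta_1, \delta_2+\delta_4, \delta_3, \delta_4$ via the Picard--Lefschetz formula \eqref{PL}), their squares $A^2, B^2, C^2, D^2$ already lie in $\Gamma(2)$ and are transvections in the doubled lattice.

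The key steps, in order, are: (i) identify the four vanishing cycles $\delta_1, \delta_2, \delta_3, \delta_4$ and express $A, B, C, D$ as the transvections $T_{\delta_1}, T_{\delta_2+\delta_4}, T_{\delta_3}, T_{\delta_4}$; (ii) observe that $\delta_1, \delta_3, \delta_2+\delta_4, \delta_4$ form (up to sign and reindexing) the symplectic basis \eqref{symbasisforfixedcurve}, so the subgroup $\langle A,B,C,D\rangle$ contains transvections along all four basis vectors plus one extra direction; (iii) use commutators and conjugates among these transvections to manufacture transvections $T_{e_i \pm e_j}$ along all the remaining roots --- this is the standard ``transvections along a root basis generate, via Steinberg-type relations, transvections along every root'' argument; (iv) square everything (or directly use that each $A^{\pm2}, B^{\pm 2},\dots$ is $I + 2(\text{root matrix})$) to land inside $\Gamma(2)$, and check that the resulting collection of $I + 2N$ exhausts a generating set of $\Gamma(2)$. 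A clean way to package (iii)--(iv): reduce modulo $2$ to see that $\langle A,B,C,D\rangle$ surjects onto a large enough subgroup of $\mr{Sp}(4,\mb{Z}/2)$, combine with the index computation $[\mr{Sp}(4,\mb{Z}):\Gamma] = 6$ announced in \S\ref{mono}, and note $[\mr{Sp}(4,\mb{Z}):\Gamma(2)] = |\mr{Sp}(4,\mb{F}_2)| = 720$; then $\Gamma \supseteq \Gamma(2)$ is equivalent to $\Gamma \cdot \Gamma(2) = \mr{Sp}(4,\mb{Z})$ having the right index, i.e.\ to the image of $\Gamma$ in $\mr{Sp}(4,\mb{F}_2) \cong S_6$ having order $720/6 = 120$, i.e.\ being a subgroup of index $6$ --- and one checks the four images $\bar A, \bar B, \bar C, \bar D$ generate such a subgroup (a point stabilizer, or an $S_5 \subset S_6$) by a direct finite computation.

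I expect the main obstacle to be step (iii)/(iv): passing from ``transvections along a spanning set of cycles'' to ``all of $\Gamma(2)$'' is not purely formal, because $\Gamma(2)$ is strictly larger than the group generated by the squares of a minimal set of transvections unless one is careful to produce transvections along \emph{every} positive root (there are more roots in $\mf{sp}(4)$ than in a root basis, and the ``long'' roots $2e_i$ require a separate argument since $T_{e_i}$ has infinite order but we only get $T_{e_i}^2 \in \Gamma(2)$ directly). The cleanest route around this is the index-counting reduction just described: it replaces the delicate word problem in $\Gamma(2)$ by the finite verification that $\langle \bar A,\bar B,\bar C,\bar D\rangle \subset \mr{Sp}(4,\mb{F}_2) \cong S_6$ has index $6$, together with the observation (from $[\mr{Sp}(4,\mb{Z}):\Gamma]=6$) that $\Gamma \cap \Gamma(2)$ then has index $1$ in $\Gamma(2)$. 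So in practice the proof reduces to: (a) compute the four matrices mod $2$; (b) identify $\mr{Sp}(4,\mb{F}_2) \cong S_6$ explicitly; (c) verify the generated subgroup is a point-stabilizer $S_5$ of order $120$; (d) conclude by the multiplicativity of indices that $\Gamma \supseteq \Gamma(2)$.
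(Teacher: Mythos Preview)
Your preferred ``cleanest'' route --- the index-counting argument (a)--(d) --- is circular as written. You invoke $[\mr{Sp}(4,\mb{Z}):\Gamma]=6$ as an input, but in the paper this equality is the \emph{Corollary} immediately following the Proposition, and its proof uses $\Gamma(2)\subseteq\Gamma$. Concretely: knowing that the image of $\Gamma$ in $\mr{Sp}(4,\mb{F}_2)\cong\Sigma_6$ is a copy of $\Sigma_5$ (order $120$) only tells you $[\Gamma\Gamma(2):\Gamma(2)]=120$, hence $[\mr{Sp}(4,\mb{Z}):\Gamma\Gamma(2)]=6$; to pass from there to $[\mr{Sp}(4,\mb{Z}):\Gamma]=6$ you need $\Gamma=\Gamma\Gamma(2)$, i.e.\ $\Gamma(2)\subseteq\Gamma$, which is exactly the statement to be proved. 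If you had an \emph{independent} proof that $[\mr{Sp}(4,\mb{Z}):\Gamma]\leq 6$ (say by a Todd--Coxeter coset enumeration over the generators $A,B,C,D,E$), your argument would go through; but you do not supply one, and the ``announcement'' in \S\ref{mono} is not an independent fact.

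The paper's proof follows your first plan --- write generators of $\Gamma(2)$ as words in $A,B,C,D$ --- but with a different and more efficient source of generators than your root-system transvections. It uses the surjection $B_6\to\mr{Sp}(4,\mb{Z})$ sending the braid generators $\zeta_1,\ldots,\zeta_5$ to $A,B,C,D,E$, under which the pure braid group $P_6=\ker(B_6\to\Sigma_6)$ maps onto $\Gamma(2)$. The standard generators of $P_6$ (Kassel--Turaev) then give generators of $\Gamma(2)$; most are visibly words in $A,B,C,D$, and the four involving $E$ are disposed of by explicit matrix identities such as $ED^2E^{-1}=(ABCD)^5(AB)^3D^{-2}(ABC)^{-4}$ and $EDCBA^2B^{-1}C^{-1}D^{-1}E^{-1}=(DCB)^4$. (Incidentally, your identification of $B$ as $T_{\delta_2+\delta_4}$ is off: $B$ is the Picard--Lefschetz transvection along $\delta_2=e_3-e_4$, not along $e_3$.) Your Steinberg-relations sketch could in principle be made to work, but as you note the long roots make it delicate, and it would in the end reduce to the same kind of explicit word-finding the paper carries out.
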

\begin{proof}
Note that by our definition of $A, B, C, D, E$, there is a natural map from the $6$-th braid group $B_6$ to $\mathrm{Sp}(4, \mb{Z})$:
\[
\phi: B_6 \to \mathrm{Sp}(4, \mb{Z}),
\]
given by $\phi(\zeta_1) = A, \phi(\zeta_2) = B, \phi(\zeta_3) = C, \phi(\zeta_4) = D$ and $\phi(\zeta_5) = E$, see \cite[\S 1.1]{2008Kassel}. Moreover we have the following commutative diagram:

\begin{xy}
(190,0)*+{P_6}="v1";(220,0)*+{B_6}="v2";(250,0)*+{\Sigma_6}="v3";
(190,-20)*+{\Gamma(2)}="v4";(220,-20)*+{\mathrm{Sp}(4, \mb{Z})}="v5";(250,-20)*+{\mathrm{Sp}(4, \mb{Z} / 2 \mb{Z})}="v6";
{\ar@{->} "v1";"v2"};{\ar@{->}^{\pi} "v2";"v3"};
{\ar@{->} "v4";"v5"};{\ar@{->} "v5";"v6"};
{\ar@{->} "v1";"v4"};
{\ar@{->}^{\phi} "v2";"v5"};{\ar@{->}^{\cong} "v3";"v6"};
\end{xy}
\noindent where $\pi$ is the map sending $\zeta_i$ to $s_i = (i, i+1)$ and $P_6 = \mr{Ker}(\pi)$. We want to show that $\Gamma(2) \subset \Gamma = \langle A,B,C,D \rangle$. Note that a set of generators of $P_6$ also provides a set of generators of $\Gamma(2)$. Now we can consider a set of generators of $P_6$ given by Corollary 1.19 in \cite[Page 21]{2008Kassel} and then we want to show that the corresponding generators of $\Gamma(2)$ can be expressed by $A, B, C, D$. Besides the ones lying in $\Gamma$, we need to show that:
\begin{equation} \nonumber
EDCBA^2B^{-1}C^{-1}D^{-1}E^{-1}, EDCB^2C^{-1}D^{-1}E^{-1}, EDC^2D^{-1}E^{-1}, ED^2E^{-1} \in \Gamma.
\end{equation}
By the explicit computation, we find that:
\[
ED^2E^{-1} = (ABCD)^5(AB)^3D^{-2}(ABC)^{-4} \in \Gamma,
\]
and
\[
EDC^2D^{-1}E^{-1} = (CD)^3(ABCD)^{-5}(AB)^{-3}(DA)^2 \in \Gamma.
\]
Moreover, we also have an identity:
\[
(EDCBA^2B^{-1}C^{-1}D^{-1}E^{-1})^{-1} = EDCB^2C^{-1}D^{-1}E^{-1} [A^2 (CD)^3 (ABCD)^5].
\]
This implies that we only need to show that $EDCBA^2B^{-1}C^{-1}D^{-1}E^{-1} \in \Gamma$. After a lot of computations, we get that:
\[
EDCBA^2B^{-1}C^{-1}D^{-1}E^{-1} = (DCB)^4 \in \Gamma.
\]
Therefore, we have $\Gamma(2) \subset \Gamma$.
\end{proof}
\begin{cor}
The index of $\Gamma$ in $\mr{Sp}(4, \mb{Z})$ is $6$. Moreover we have a short exact sequence
\begin{equation}
1 \to \Gamma(2) \to \Gamma \to \Sigma_5 \to 1.
\end{equation}
\end{cor}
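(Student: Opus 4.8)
The plan is to leverage the inclusion $\Gamma(2)\subseteq\Gamma$ proved in the previous proposition together with the correspondence theorem. Since $\Gamma(2)$ is normal in $\mr{Sp}(4,\mb{Z})$ with quotient $\mr{Sp}(4,\mb{Z}/2\mb{Z})$, the subgroups of $\mr{Sp}(4,\mb{Z})$ containing $\Gamma(2)$ correspond bijectively, and with preservation of index, to the subgroups of $\mr{Sp}(4,\mb{Z}/2\mb{Z})$. Thus the entire statement reduces to understanding the image $\bar\Gamma$ of $\Gamma$ in $\mr{Sp}(4,\mb{Z}/2\mb{Z})$.

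First I would invoke the classical exceptional isomorphism $\mr{Sp}(4,\mb{Z}/2\mb{Z})\cong\Sigma_6$, namely the one already figuring in the commutative diagram of the previous proof, which is compatible with the braid-group maps $\phi$ and $\pi$. Under this isomorphism the reductions modulo $2$ of the generators $A,B,C,D$ of $\Gamma$ are exactly the images of $\zeta_1,\zeta_2,\zeta_3,\zeta_4$, hence the adjacent transpositions $s_1=(1\,2),\ s_2=(2\,3),\ s_3=(3\,4),\ s_4=(4\,5)$ in $\Sigma_6$. Next I would observe that $s_1,s_2,s_3,s_4$ generate precisely the standard copy of $\Sigma_5\subset\Sigma_6$, the stabilizer of the letter $6$, since adjacent transpositions on $\{1,\dots,5\}$ generate the full symmetric group on those five letters. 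Therefore $\bar\Gamma\cong\Sigma_5$, and by the correspondence theorem $[\mr{Sp}(4,\mb{Z}):\Gamma]=[\Sigma_6:\Sigma_5]=\tfrac{720}{120}=6$, which is the first assertion.

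For the short exact sequence, I would note that $\Gamma(2)=\ker\big(\mr{Sp}(4,\mb{Z})\to\mr{Sp}(4,\mb{Z}/2\mb{Z})\big)$ is normal in $\mr{Sp}(4,\mb{Z})$, hence normal in $\Gamma$, and because $\Gamma(2)\subseteq\Gamma$ the kernel of the restricted map $\Gamma\to\mr{Sp}(4,\mb{Z}/2\mb{Z})$ is $\Gamma\cap\Gamma(2)=\Gamma(2)$, with image $\bar\Gamma\cong\Sigma_5$. This gives the exact sequence $1\to\Gamma(2)\to\Gamma\to\Sigma_5\to 1$.

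Since all the arithmetic input — in particular the hard inclusion $\Gamma(2)\subseteq\Gamma$ — is already in hand, I do not expect a serious obstacle here; the only point requiring care is to confirm that the isomorphism $\mr{Sp}(4,\mb{Z}/2\mb{Z})\cong\Sigma_6$ being used is the one making the earlier diagram commute, so that $\bar A,\bar B,\bar C,\bar D$ genuinely are the adjacent transpositions and $\bar\Gamma$ is the point-stabilizer $\Sigma_5$ rather than some other index-six subgroup of $\Sigma_6$. As an alternative that sidesteps this identification altogether, one can simply reduce the four generating matrices modulo $2$ and verify directly — for instance by a coset enumeration — that they generate a subgroup of order $120$ inside $\mr{Sp}(4,\mb{F}_2)$, a group of order $720$, which yields the index $6$ at once.
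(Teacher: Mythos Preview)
Your proposal is correct and follows essentially the same approach as the paper: both identify the image of $\Gamma$ under reduction mod $2$ with $\Sigma_5\subset\Sigma_6$ via the generators $A,B,C,D\mapsto(12),(23),(34),(45)$, and both use the inclusion $\Gamma(2)\subseteq\Gamma$ to conclude that the kernel of $\Gamma\to\Sigma_5$ is exactly $\Gamma(2)$. Your use of the correspondence theorem is a slightly cleaner packaging of the same logic the paper carries out with its diagram.
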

\begin{proof}
We have the following diagram:
\

\begin{xy}
(190,0)*+{\mr{Ker}(\pi_{\Gamma})}="v1";(220,0)*+{\Gamma}="v2";(250,0)*+{\Sigma_5}="v3";
(190,-20)*+{\Gamma(2)}="v4";(220,-20)*+{\mathrm{Sp}(4, \mb{Z})}="v5";(250,-20)*+{\mathrm{Sp}(4, \mb{Z} / 2 \mb{Z}) \cong \Sigma_6}="v6";
{\ar@{->} "v1";"v2"};{\ar@{->}^{\pi_{\Gamma}} "v2";"v3"};
{\ar@{->} "v4";"v5"};{\ar@{->} "v5";"v6"};
{\ar@{->} "v1";"v4"};
{\ar@{->} "v2";"v5"};{\ar@{->}"v3";"v6"};
\end{xy}
\noindent Note that we use the fact that the map $\Gamma \to \mr{Sp}(4, \mb{Z}) \to \mr{Sp}(4, \mb{Z}/2\mb{Z}) \cong \Sigma_6$ sends $A, B, C, D$ to $(12), (23), (34), (45)$, which implies that the image is $\Sigma_5$. On the other hand, we have shown that $\Gamma(2) \to \Gamma$ is injective and this map factors through $\mr{Ker}(\pi_{\Gamma})$. This means that $\Gamma(2) \to \mr{Ker}(\pi_{\Gamma})$ is injective and hence is an isomorphism. Then we get a short exact sequence 
 \[
1 \to \Gamma(2) \to \Gamma \to \Sigma_5 \to 1. 
 \]
 We may take $ABCDE, BCDE, CDE, DE, E, I$ as representatives of cosets of $\Gamma$ in $\mr{Sp}(4, \mb{Z})$.
\end{proof}

\begin{rmk}
Note that there is an equality of matrices:
$
(ABCD)^5 = - I \in \Gamma(2).
$
In geometric terms, $ABCD$ is the monodromy matrix around the singularity $y^2-x^5=0$. 
\end{rmk}

\begin{rmk}
We recall that the theta group $\Gamma_{2, \theta}$ of genus $2$ is defined as
\[
\Gamma_{2, \theta} = \left\{ \left( \begin{matrix}
A & B \\
C & D
\end{matrix} \right) \in Sp(4, \mb{Z}) \Big| C D^{\mr{tr}} \ \mathrm{and} \ AB^{\mr{tr}} \ \mathrm{have \ even \ diagonal} \right\},
\]
which is a congruence subgroup of the full modular group $Sp(4, \mb{Z})$ with index $10$, see \cite[Remark 8.1, page 459]{Freitag2011}. It is also called Igusa's congruence subgroup and denoted by 
$\Gamma[1,2]$, see  \cite[page 462]{Freitag2011}. Therefore $\Gamma_{2, \theta}$ is neither the same as $\Gamma$, nor a subgroup of $\Gamma$.
\end{rmk}

\section{Differential Siegel modular forms}
\label{s5}
\subsection{The $\tmap$ map} 
\label{Boundary}
We denote the moduli space of curves of genus 2 (resp. principally polarized abelian surfaces) by 
$\ml{M}_2$ (resp. $\ml{A}_2$). Recall that we can view $\ml{M}_2$ as an open subspace of 
$\ml{A}_2$. The complement of $\ml{M}_2$ in $ \ml{A}_2$ consists of 
nodal curves, see for instance \cite[Page 1654]{2017vanGeer}.
The period map gives us an 
isomorphism $\ml{A}_2\cong {\mr{Sp}(4, \mb{Z}) \backslash \mb{H}_2}$ and under this isomorphism,
the loci of nodal curves is given by $\tau_3=0$. This in turn is the moduli of 
product of two elliptic curves.  We have the following commutative diagram:

\begin{xy}
(210,0)*+{\ml{M}_2(w)}="v1";(240,0)*+{\Gamma \backslash \mb{H}_2}="v2";
(210,-20)*+{\ml{M}_2}="v3";(240,-20)*+{\mr{Sp}(4, \mb{Z}) \backslash \mb{H}_2\cong \ml{A}_2}="v4";
{\ar@{->} "v1";"v2"};{\ar@{->} "v3";"v4"};
{\ar@{->} "v1";"v3"};{\ar@{->} "v2";"v4"}; 
\end{xy}
\noindent where the horizontal maps are injective and the vertical maps are both 6 to 1 covering maps. 
By our construction $\ml{M}_2(w)\cong \mb{P}^{2, 3, 4, 5} 
\backslash \{ t_5 \Delta = 0\}$. Smooth points of $\Delta=0$ correspond to nodal 
curves. Let us define 
\begin{eqnarray*}
\overline{\ml{M}_2(w)} &:=& \left (\mb{P}^{2, 3, 4, 5} 
\backslash \{ t_5=0\}\right) \cup  \rm smooth(\Delta = 0).
\end{eqnarray*}
We observe that the map $\overline{\ml{M}_2(w)}\to {\ml{A}_2}$ is ramified over the 
loci of nodal curves, and in this loci it is a $5$ to $1$ map.
\begin{prop}
\label{02oct2019}
 The moduli of triples $(Y,P, e)$, where $Y$ is a genus two curve, $P$ is a Weierstrass point of $Y$ and $e:=(e_1,e_2,e_3,e_4)$ is a symplectic basis of $H_1(Y,\Z)$, is isomorphic to a disjoint union of six copies of $\uhp_2-\{\tau_3=0\}$.   
\end{prop}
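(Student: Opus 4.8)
The plan is to exhibit the moduli space of triples $(Y,P,e)$ as a fiber product combining the description of $\ml{M}_2(w)$ as an open subset of $\mb{P}^{2,3,4,5}$ with the covering theory of the period map. First I would recall that forgetting the symplectic basis $e$ gives a map to $\ml{M}_2(w)$, and that the datum of $e$ up to nothing (i.e.\ a genuine choice, not a class) is precisely a section of the $\mr{Sp}(4,\Z)$-torsor of symplectic bases. Thus the space of triples is the pullback of the universal cover $\uhp_2 \to \mr{Sp}(4,\Z)\backslash\uhp_2 \cong \ml{A}_2$ along the composite $\ml{M}_2(w) \hookrightarrow \ml{M}_2 \hookrightarrow \ml{A}_2$. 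Equivalently, writing $\ml{M}_2(w) = \Gamma\backslash(\uhp_2 - \{\tau_3=0\})$ from the commutative diagram in \S\ref{Boundary} (where the vertical maps are $6$-to-$1$ and the image of $\ml{M}_2$ in $\ml{A}_2$ is $\ml{A}_2 - \{\tau_3=0\}$), the moduli of triples is
\[
\{(Y,P,e)\} \;\cong\; \bigl(\Gamma\backslash(\uhp_2-\{\tau_3=0\})\bigr) \times_{\mr{Sp}(4,\Z)\backslash(\uhp_2-\{\tau_3=0\})} \bigl(\uhp_2-\{\tau_3=0\}\bigr).
\]

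The key computation is then purely group-theoretic: the fiber product of $\Gamma\backslash X$ and $G\backslash X$ over $G\backslash X$ (with $G = \mr{Sp}(4,\Z)$, $X = \uhp_2-\{\tau_3=0\}$, and $G$ acting with the relevant freeness so that these are honest quotient maps of the associated torsors) is the disjoint union, over the coset space $\Gamma\backslash G$, of copies of $X$. By the Corollary in \S\ref{mono}, $[\,G : \Gamma\,] = 6$, with explicit coset representatives $I, E, DE, CDE, BCDE, ABCDE$. Hence the fiber product has exactly six connected components, each isomorphic to $X = \uhp_2 - \{\tau_3=0\}$, which is the claim. Concretely: over a point of $\ml{M}_2(w)$ the six preimages in $\ml{A}_2$-with-basis correspond to the six ways the marked Weierstrass point sits among the six branch points (equivalently the $\Sigma_5$-quotient structure $1 \to \Gamma(2)\to\Gamma\to\Sigma_5\to 1$ versus $\Sigma_6$), and choosing a full symplectic basis rather than a class rigidifies away the remaining automorphisms so that each component is simply the cover $X$.

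I would organize the write-up in three steps. Step one: identify the functor — a triple $(Y,P,e)$ is the same as a point of $\ml{M}_2(w)$ together with a lift of its period point in $\ml{A}_2 - \{\tau_3=0\}$ to $\uhp_2-\{\tau_3=0\}$, using that the period map identifies symplectic bases of $H_1(Y,\Z)$ with points of $\uhp_2$ over the isomorphism class, and that $Y$ smooth of genus two forces $\tau_3\neq 0$ (nodal locus). Step two: translate into the fiber product above using the commutative square of \S\ref{Boundary}. Step three: evaluate the fiber product using the index-six coset decomposition from \S\ref{mono}, concluding six copies of $\uhp_2-\{\tau_3=0\}$.

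The main obstacle is bookkeeping the difference between $\ml{M}_2(w)$ and $\overline{\ml{M}_2(w)}$ — i.e.\ being careful that we work over the open locus where $Y$ is a smooth genus two curve (not a product of elliptic curves), which is exactly the complement of $\tau_3=0$ upstairs and where the covers are unramified, so that the fiber product is clean and the component count is governed by $[\mr{Sp}(4,\Z):\Gamma]=6$ rather than by some ramified behavior. A secondary subtlety is checking that no extra identifications occur inside each component: since we record an actual symplectic basis and not an orbit, the only ambiguity is the choice of coset in $\Gamma\backslash\mr{Sp}(4,\Z)$, and $-I = (ABCD)^5 \in \Gamma(2) \subset \Gamma$ (Remark in \S\ref{mono}) already lies in $\Gamma$, so the elliptic involution does not split components further. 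Once these points are settled the statement follows.
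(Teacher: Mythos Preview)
Your proposal is correct and follows essentially the same approach as the paper: both arguments hinge on the identification $\ml{M}_2(w)\cong\Gamma\backslash(\uhp_2-\{\tau_3=0\})$ together with $[\Sp(4,\Z):\Gamma]=6$. The only difference is packaging---the paper fixes a basepoint, takes the connected component $\tilde\uhp_2$ through it, and argues by comparing degrees in the commutative square that $\tilde\uhp_2\to\uhp_2-\{\tau_3=0\}$ is one-to-one, whereas you phrase the same covering-space count as a fiber product over $\Sp(4,\Z)\backslash(\uhp_2-\{\tau_3=0\})$ decomposing along the six cosets.
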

\begin{proof}
Let $Y_0: y^2=x^5+1$   be the hyperelliptic curve in 
\eqref{12apr2019} and $e$ be the symplectic basis of $H_1(Y_0,\Z)$ defined in 
\eqref{symbasisforfixedcurve}. This data together with the Weierstrass point $P:=[0:1:0]$ at infinity, gives us a point in the moduli space of the proposition. Let $\tilde \uhp_2$ be the connected component of this moduli containing this point.  We have the following commutative diagram 

\begin{xy}
(210,0)*+{\ml{M}_2(w)}="v1";(240,0)*+{\ml{M}_2}="v2";
(210,-20)*+{\tilde \uhp_2}="v3";(240,-20)*+{\uhp_2-\{\tau_3=0\}}="v4";
{\ar@{->} "v1";"v2"};{\ar@{->} "v3";"v4"};
{\ar@{->} "v3";"v1"};{\ar@{->} "v4";"v2"}; 
\end{xy}
\noindent
Since $\tilde\uhp_2/\Gamma\cong \ml{M}_2(w)$ and  $(\uhp_2/\Sp(4,\Z))-\{\tau_3=0\}\cong \ml{M}_2$ and $\ml{M}_2(w)\to \ml{M}_2$ is $6$ to $1$ map and $\Gamma$ has index $6$ in $\Sp(4,\Z)$ we get the fact the canonical covering $\tilde \uhp_2\to \uhp_2-\{\tau_3=0\}$ is one to one, and hence,  an isomorphism. This is enough to finish the proof.  
\end{proof}
From now on we regard $\uhp_2-\{\tau_3=0\}$ as the connected component of the  moduli  of triples $(Y,P, e)$ containing the marked point as in the beginning of the proof of Proposition \ref{02oct2019}. 
We have a natural map
$$
\tmap : \mb{H}_2-\{\tau_3=0\}  \to \Tf, 
$$
which is defined in the following way. For $\tau\in \uhp_2-\{\tau_3=0\}$ corresponding to $(Y,p,e)$,  there is a unique basis $\alpha$ of $H^1_\dR(Y)$ such that 
$(Y,\alpha)$ is enhanced and we have the following format of the period matrix
\begin{equation}
\begin{bmatrix}
\int_{e_i} \alpha_j
\end{bmatrix}_{i,j=1,2,3,4} = 
\begin{bmatrix}
\tau & -I_{2\times 2} \\
I_{2\times 2} & 0_{2\times 2} \\
\end{bmatrix}.  
\end{equation} 
For this we first take  an arbitrary enhancement $(Y, \alpha)$ and use 
the fact that there is a unique $\mbf{g} \in \BG$ such that the period matrix of $\alpha \mbf{g}$ is
of the desired form, see for instance \cite[\S 4.1]{ho18}. We define $\tmap (\tau)=(Y,\alpha)$.
We can now interpret all the functions in Proposition \ref{14.04.2019.dubai}, and in particular $T_i$'s, 
 as holomorphic  functions in $\mb{H}_2-\{\tau_3=0\}$. This is obtained by taking the pull-back 
$T_i\circ \tmap$. The $\Gm$-action on $\Sf$, which is basically given by \eqref{14april2019},  
is translated into the automorphic property of 
$T_i$'s:
\begin{equation}
\label{riverside2019}
 \det(c\tau+d)^{-\frac{i}{4}}T_{i}((a\tau+b)\cdot (c\tau+d)^{-1})=T_i(\tau),\ \ \mat{a}{b}{c}{d}\in\Gamma.
\end{equation}
The functional equation of other functions on $\Tf$ are more complicated and can be obtained from the action
of $\BG$. For computing these functional equations in the case of elliptic curves (resp. mirror quintic) see \cite[\S 8.5]{ho14} (\cite[Theorem 7]{GMCD-MQCY3}). 

In principle, the pull-back of regular 
functions in $\Tf$ by the $\tmap$-map might be meromorphic in $\tau_3=0$. Therefore, we are looking for generators of the algebra of differential Siegel modular forms for $\Gamma$:
\begin{equation}
{\rm DM}(\Gamma):= \left\{f\in\ml{O}_\Tf\ \ \Big|\ \   f\circ\tmap \hbox{ is holomorphic along  }\tau_3=0 \right\},
\end{equation}
where for an affine variety $V$ we have denoted by $\ml{O}_V$  the ring of global regular 
functions in $V$.
The algebra of Siegel modular forms for $\Gamma$ is defined similarly:
\begin{equation}
{\rm M}(\Gamma):= \left\{f\in\ml{O}_\Sf\ \ \Big|\ \   f\circ\tmap \hbox{ is holomorphic along  }\tau_3=0 \right\}.
\end{equation}
Note that using the canonical projection $\Tf\to\Sf$ we have considered $\ml{O}_{\Sf}$ as a subalgebra of $\ml{O}_\Tf$. 
We note that $T_{20},\Delta\in {\rm DM}(\Gamma)$. More precisely,  these are holomorphic Siegel modular forms for $\Gamma$ of weights $5$ and $10$, respectively. Both of them vanish along $\tau_3=0$. If not, then $\frac{1}{T_{20}}$ and $\frac{1}{\Delta}$ would be  holomorphic Siegel modular form for $\Gamma$ of negative weight which is a contradiction. 


Theoretical arguments 
as in \cite[Chapter 11]{ho2020} and \cite{2019Fonseca} show that modular vector fields in the case of abelian varieties 
are holomorphic in the whole moduli. However our vector fields in the present text have pole order one along $t_5=0$.
This translates into meromorphicity along $\tau_3=0$ and it implies that the pull-back of regular functions in $\Tf$ to
the Siegel domain are not all holomorphic along $\tau_3=0$.

\subsection{Comparison with Igusa's invariants}
In order to compare our results in this paper with Igusa's results in \cite{igusa67}, we also need to define the 
moduli space $\check \Sf$ and $\check\Tf$ which have the same definition as $\Sf$ and $\Tf$  but without the presence of Weierstrass points. 
There is a natural forgetful map:
\begin{equation} \label{compmap}
\Sf\to \check \Sf, \Tf\to \check\Tf,\ \  (Y, P, \alpha) \to (Y, \alpha).
\end{equation}
We also need the ring of invariants of binary sextics, which is given by 
\begin{equation}
I=\mb{C}[A, B, C, D, E]/\langle E^2-P(A, B, C, D)\rangle ),    
\end{equation}
where $P$ is an explicit polynomial and we have natural weights $\deg(A)=2,\ \deg B = 4, \deg(C) = 6, \deg(D) = 10,\  \deg(E) = 15$, see \cite{Bolza87}. We also need the moduli $\Tf_{\rm av}$ and $\Sf_{\rm av}$ which are the moduli of enhanced principally polarized abelian surfaces, see for instance \cite[\S 4.1]{ho18} or \cite[Chapter 11]{ho2020}.  Then we have the following diagram:
\

\begin{xy}
(180,0)*+{\mr{M}(\Gamma)}="v1";(210,0)*+{\ml{O}_{\mbf{S}}}="v2";(240,0)*+{\ml{O}_{\Tf}}="v3";(270,0)*+{{\rm DM}(\Gamma)}="v4";
(180,-20)*+{\mr{M}(\mr{Sp}(4, \mathbb{Z}))\cong  \ml{O}_{\Sf_{\rm av}}   }="v5";(210,-20)*+{\ml{O}_{\check{\mbf{S}}}}="v6";(240,-20)*+{\ml{O}_{\check{\Tf}}}="v7";(270,-20)*+{\ml{O}_{\Tf_{\rm av}}\cong{\rm DM}(\mr{Sp}(4, \mathbb{Z}))}="v8";(210,-40)*+{I}="v9";
{\ar@{^{(}->} "v1";"v2"};{\ar@{^{(}->} "v2";"v3"};{\ar@{_{(}->} "v4";"v3"};
{\ar@{^{(}->} "v5";"v6"};{\ar@{^{(}->} "v6";"v7"};{\ar@{_{(}->} "v8";"v7"};
{\ar@{_{(}->} "v5";"v1"};{\ar@{_{(}->} "v6";"v2"};{\ar@{_{(}->} "v7";"v3"};{\ar@{_{(}->} "v8";"v4"};
{\ar@{_{(}->} "v9";"v6"};
\end{xy}
In \cite[Page 848]{igusa67}, Igusa shows that  the intersection of of ${\rm M}(\mr{Sp}(4, \mb{Z}))$ 
with  $I$, is generated by \[
E_4:=B,\  E_6:=4AB-3C, \ \chi_{10}:=D,\  \chi_{12}:=AD,\ \chi_{35}:=D^2E 
\]
up to constants.
Note that in the proof of Theorem \ref{moduli sp for Siegel}, the $\mb{G}_m$-action multiplies $[\frac{\mr{d}x}{y}] \wedge [\frac{x\mr{d}x}{y}]$ with $a^{-4}$ for $a \in \mb{G}_m$. This implies that the degree in our case is $4$ times the canonical  degree of Siegel modular forms. 
Let $F$ be the composition of two inclusions $I\hookrightarrow \ml{O}_{\check{\Sf}}\hookrightarrow \ml{O}_{\Sf}$.
Looking $I$ inside $\ml{O}_{\Sf}$, we know that $E_4$ (resp. $E_6, \chi_{10}, \chi_{12}, \chi_{35}$) are degree $16$ (resp. $24, 40, 48, 140$) polynomials in variables $T_4, T_8, T_{12}, T_{16}$ and $T_{20}$, which are defined in Remark \ref{T-variables}. 
We can use invariant theory to determine these expressions explicitly. This is as follows. The curve given by $y^2 = x^5+t_2x^3+t_3x^2+t_4x+t_5$ corresponds to the binary sextic $f = t_5x^6+t_4x^5y+t_3x^4y^2+t_2x^3y^3+xy^5$.
The result up to multiplication with constants is
\begin{eqnarray*}
A(f)  &=& -3t_2^2-20t_4, \\
B(f) &=& -3t_2t_3^2+9t_2^2t_4-20t^2_4+75t_3t_5,\\
C(f) &=& 12t_2^3t_3^2+18t_3^4-36t_2^4t_4-13t_2t_3^2t_4-88t_2^2t_4^2+\\
     & &160t_4^3-165t_2^2t_3t_5-800t_3t_4t_5-1125t_2t_5^2, \\
D(f) &=& \Delta \ \  \hbox{ is defined in \eqref{23.02.2019}, }\\
E(f) &=& 
729t_{2}^{10}t_{5}^{2}-486t_{2}^{9}t_{3}t_{4}t_{5}+108t_{2}^{8}t_{3}^{3}t_{5}+81t_{2}^{8}t_{3}^{2}t_{4}^{2}-18225t_{2}^{8}t_{4}t_{5}^{2}-36t_{2}^{7}t_{3}^{4}t_{4}+\\
& &12150t_{2}^{7}t_{3}^{2}t_{5}^{2}+10800t_{2}^{7}t_{3}t_{4}^{2}t_{5}+4t_{2}^{6}t_{3}^{6}-9720t_{2}^{6}t_{3}^{3}t_{4}t_{5}-1800t_{2}^{6}t_{3}^{2}t_{4}^{3}+\\
&&135000t_{2}^{6}t_{4}^{2}t_{5}^{2}+1584t_{2}^{5}t_{3}^{5}t_{5}+2015t_{2}^{5}t_{3}^{4}t_{4}^{2}-175500t_{2}^{5}t_{3}^{2}t_{4}t_{5}^{2}-60000t_{2}^{5}t_{3}t_{4}^{3}t_{5}+\\
&&928125t_{2}^{5}t_{5}^{4}-623t_{2}^{4}t_{3}^{6}t_{4}+60000t_{2}^{4}t_{3}^{4}t_{5}^{2}+92500t_{2}^{4}t_{3}^{3}t_{4}^{2}t_{5}+10000t_{2}^{4}t_{3}^{2}t_{4}^{4}-\\
&&1012500t_{2}^{4}t_{3}t_{4}t_{5}^{3}-250000t_{2}^{4}t_{4}^{3}t_{5}^{2}+59t_{2}^{3}t_{3}^{8}-45050t_{2}^{3}t_{3}^{5}t_{4}t_{5}-17500t_{2}^{3}t_{3}^{4}t_{4}^{3}+\\
&&225000t_{2}^{3}t_{3}^{3}t_{5}^{3}+850000t_{2}^{3}t_{3}^{2}t_{4}^{2}t_{5}^{2}-2812500t_{2}^{3}t_{4}t_{5}^{4}+5700t_{2}^{2}t_{3}^{7}t_{5}+10825t_{2}^{2}t_{3}^{6}t_{4}^{2}-\\
&&478125t_{2}^{2}t_{3}^{4}t_{4}t_{5}^{2}-50000t_{2}^{2}t_{3}^{3}t_{4}^{3}t_{5}+1875000t_{2}^{2}t_{3}^{2}t_{5}^{4}+1250000t_{2}^{2}t_{3}t_{4}^{2}t_{5}^{3}-\\
&&2610t_{2}t_{3}^{8}t_{4}+93750t_{2}t_{3}^{6}t_{5}^{2}+32500t_{2}t_{3}^{5}t_{4}^{2}t_{5}-1187500t_{2}t_{3}^{3}t_{4}t_{5}^{3}+216t_{3}^{10}-\\
&&9000t_{3}^{7}t_{4}t_{5}+625t_{3}^{6}t_{4}^{3}+175000t_{3}^{5}t_{5}^{3}+15625t_{3}^{4}t_{4}^{2}t_{5}^{2}-390625t_{3}^{2}t_{4}t_{5}^{4}-9765625t_{5}^{6},
\end{eqnarray*}
which can be easily rewritten in terms of $T_i$'s. 
The equalities for $A,B,C,D$ are taken from \cite[\S 3.3 page 47]{2013Taylor}.
Igusa's generator $E$ is the first invariant with the odd degree
and we have computed it from the classical theory of transvectants. Up to constant, it is $j_{15}$ defined in  \cite[page 53]{2014Draissma}.

For our main purposes, it would be essential to generalize Igusa's results and find generators for the ${\rm DM}(\Gamma)$. For this one has to understand in a geometric way why the pull-back of $A=\frac{AD}{D}$ and $E=\frac{D^2E}{D^2}$ by the $\tmap$-map has a pole order $2$ and $4$ along $\tau_3=0$, respectively. Note that $D$ has a zero of order two along $\tau_3=0$, see for instance \cite[Page 849]{igusa67}. 
One possible way to investigate this is as follows. 
We take a three parameters family of genus two curves with a marked Weierstrass point (for instance the family $Y_t$ in \eqref{quintic} with some $t_i$ equal to zero). We also take the canonical basis $\omega$ of $H^1_\dR(Y_t)$ in \eqref{de Rham basis1}. We can  express the generators of $\ml{O}_\Tf$ in terms of the periods of $\omega$ and then we investigate when such period expressions are holomorphic around nodal curves. This is going to be explained in \S\ref{7oct2019}. Similar computations has been explained in the case of elliptic curve in \cite[\S 8.7]{ho14} and mirror quintic in \cite[\S 4.9]{GMCD-MQCY3}.

\subsection{Expressions of the image of $\tmap$-map via abelian integrals}
\label{7oct2019}
We choose the following loci $L$ of $\Tf$  consisting of points $(t,S_0)$ of the form:
\begin{equation}
\label{lunchwithyau2019}
(t_2,t_3,t_4,t_5),\ S_0=
\left( 
\begin{matrix}
1 & 0 & 0 & 0 \\
0 & 1 & 0 & 0 \\
0 & \frac{t_2}{4} & 0 & \frac{3}{4} \\
0 & 0 &  \frac{1}{4} & 0 
\end{matrix}
\right).
\end{equation}
We could restrict ourselves to a three dimensional subspace in $t=(t_2,t_3,t_4,t_5)$, however, for lack of a canonical choice we do not do it. 
Let $$x_{ij}:=\int_{e_i}{\omega_j},$$
where $e_i$ (resp. $\omega_j$) are the symplectic basis 
(resp. the canonical de Rham cohomology basis) of genus two curves determined by (\ref{quintic}). 
We have 
$$
\left[
\int_{e_i}\alpha_j
\right]=[x_{ij}]S_0^{\tr}= 
\begin{bmatrix}
\tau & -I \\
I & 0
\end{bmatrix}\cdot \mbf{g},
$$
where 
\begin{equation}
\label{deadsleap2019}
\tau=\begin{bmatrix}
      \frac{x_{11}x_{42}-x_{41}x_{12}}{x_{31}x_{42}-x_{41}x_{32}} & -\frac{x_{11}x_{32}-x_{31}x_{12}}{x_{31}x_{42}-x_{41}x_{32}}  \\
      \frac{x_{21}x_{42}-x_{41}x_{22}}{x_{31}x_{42}-x_{41}x_{32}} & -\frac{x_{21}x_{32}-x_{31}x_{22}}{x_{31}x_{42}-x_{41}x_{32}} 
      \end{bmatrix},\ \ 
\mbf{g} =\begin{bmatrix}
x_{31} & x_{32} & \frac{1}{4}t_2x_{32}+\frac{3}{4} x_{34} & \frac{1}{4}x_{33} \\
x_{41} & x_{42} & \frac{1}{4}t_2x_{42}+\frac{3}{4} x_{44} & \frac{1}{4}x_{43} \\
0 & 0 & \frac{x_{42}}{x_{31}x_{42} - x_{32}x_{41}} & \frac{-x_{41}}{x_{31}x_{42} - x_{32}x_{41}}  \\
0 & 0 & \frac{-x_{32}}{x_{31}x_{42} - x_{32}x_{41}} & \frac{x_{31}}{x_{31}x_{42} - x_{32}x_{41}}  \\
\end{bmatrix}\in\BG.
\end{equation}
This implies that $\tmap(\tau) =(t,S_0) \bullet \mbf{g}^{-1}$, where $(t,S_0)$ is the point in $\Tf$ given by (\ref{lunchwithyau2019}).
This means that the image of $\tau$ as above (viewed as an element in the Siegel domain $\mb{H}_2$) under the $\tmap$-map is given by the entries of the action of $\mbf{g}^{-1}$ on the point $(t,S_0)$ in \eqref{lunchwithyau2019}, where $t=(t_2,t_3,t_4,t_5)$.  This is  $(t,S)$, where 
$$
S:=\mbf{g}^{-\mr{tr}}S_0=
$$
\begin{equation}
\left[
\begin{array}{*{4}{c}}
\frac{x_{42}}{x_{31}x_{42}-x_{32}x_{41}} & -\frac{x_{41}}{x_{31}x_{42}-x_{32}x_{41}} & 0 & 0 \\
-\frac{x_{32}}{x_{31}x_{42}-x_{32}x_{41}} & \frac{x_{31}}{x_{31}x_{42}-x_{32}x_{41}} & 0 & 0 \\
\frac{3x_{31}x_{32}x_{44}-3x_{31}x_{34}x_{42}+x_{32}^{2}x_{43}-x_{32}x_{33}x_{42}}{4x_{31}x_{42}-4x_{32}x_{41}} & -\frac{3x_{31}^{2}x_{44}+x_{31}x_{32}x_{43}-3x_{31}x_{34}x_{41}-x_{32}x_{33}x_{41}}{4x_{31}x_{42}-4x_{32}x_{41}} & \frac{x_{32}}{4} & \frac{3x_{31}}{4} \\
\frac{3x_{32}x_{41}x_{44}+x_{32}x_{42}x_{43}-x_{33}x_{42}^{2}-3x_{34}x_{41}x_{42}}{4x_{31}x_{42}-4x_{32}x_{41}} & -\frac{3x_{31}x_{41}x_{44}+x_{31}x_{42}x_{43}-x_{33}x_{41}x_{42}-3x_{34}x_{41}^{2}}{4x_{31}x_{42}-4x_{32}x_{41}} & \frac{x_{42}}{4} & \frac{3x_{41}}{4}
\end{array}
\right]
\end{equation}

Now, a differential Siegel modular form $f\in\ml{O}_\Tf$ evaluated at $(t,S)$ above has an expression in terms 
of abelian integrals $x_{ij}$.  If we regard $f(\tau)$ as a function in $\tau$, this means that we replace $\tau$ in \eqref{deadsleap2019} in $f(\tau)$ and we get such period expressions.  For instance, we have the equality:
\begin{equation}
 T_{4i}
 \left(
 \begin{bmatrix}
      \frac{x_{11}x_{42}-x_{41}x_{12}}{x_{31}x_{42}-x_{41}x_{32}} & -\frac{x_{11}x_{32}-x_{31}x_{12}}{x_{31}x_{42}-x_{41}x_{32}}  \\
      \frac{x_{21}x_{42}-x_{41}x_{22}}{x_{31}x_{42}-x_{41}x_{32}} & -\frac{x_{21}x_{32}-x_{31}x_{22}}{x_{31}x_{42}-x_{41}x_{32}} 
      \end{bmatrix}
 \right)=
 a_i \cdot (x_{31}x_{42}-x_{41}x_{32})^{i},\ \ i=1,2,3,4,5,
\end{equation}
where $(a_1,a_2,a_3,a_4,a_5) = (t_2,t_4,t_3^2,t_3t_5,t_5^2)$. This follows form the definition of $T_{i}$'s in \S\ref{affinechart} and we have regarded them  as functions on $\uhp_2$. The periods $x_{ij}$ are functions in $t$, and if we write them in terms of the modular parameters \eqref{6oct2016} and then these parameters as functions of $\tau$ then we may arrive in formulas which are natural generalization of some classical identities such as Fricke and Klein's formula in the case of elliptic curves:
$$
\sqrt[4]{E_4(\tau)}=F(\frac{1}{12},\frac{5}{12},1;\frac{1728}{j(\tau)}),
$$
see for instance \cite[\S 2.3]{kon}. See also \cite[page 364]{ho14} for more examples of this type.  
Finally note that we have 
six relations between $x_{ij}$:
\begin{eqnarray}
\label{polyrelations}
x_{12}x_{31}-x_{11}x_{32}+x_{22}x_{41}-x_{21}x_{42}  &= &0,\\ \nonumber
x_{13}x_{31}-x_{11}x_{33}+x_{23}x_{41}-x_{21}x_{43} & =& 0,\\ \nonumber
x_{14}x_{31}-x_{11}x_{34}+x_{24}x_{41}-x_{21}x_{44} &=& \frac{4}{3},\\ \nonumber
x_{13}x_{32}-x_{12}x_{33}+x_{23}x_{42}-x_{22}x_{43}&=& 4,\\ \nonumber
x_{14}x_{32}-x_{12}x_{34}+x_{24}x_{42}-x_{22}x_{44}  &=& 0,\\  \nonumber
x_{14}x_{33}-x_{13}x_{34}+x_{24}x_{43}-x_{23}x_{44}  &=& \frac{4}{3}t_2.
\end{eqnarray}
These equalities correspond to the entries $(1,2),(1,3),(1,4),(2,3),(2,4)$ and  $(3,4)$ of
the equality $[x_{ij}]^{\tr}\Phi^{-\tr} [x_{ij}]=\Omega$, where $\Phi$ and $\Omega$ are given in \eqref{05oct2019}  and \eqref{Phi-omega}. This comes from the duality of intersection form in homology and cup product in cohomology, see for instance \cite[\S 4.1]{GMCD-MQCY3}.

\subsection{Proof of Theorem \ref{maintheo}}
The main ingredient of the proof is the $\tmap$-map which translates most of the algebraic machinery introduced in this paper into complex analysis of holomorphic functions on the Siegel domain. The $153$ quantities mentioned in the theorem are just the pull-back of functions constructed in \S\ref{affinechart}. 
Part \ref{mt1} is established in \S\ref{Boundary}. 
Note that the $\Gm$-action on $\Sf$ and the functional equation  \eqref{14april2019} via the $\tmap$-map is translated into the functional equation \eqref{riverside2019}.
Part \ref{mt2} follows from the fact that $\Tf$ is an affine variety defined over $\Q$. The ideal of polynomial relations among $X_i$ can be computed easily from their algebraic definition in \S \ref{affinechart}.
For part \ref{mt3} note that the affine variety $\Q[X]/I$ is just the ring of functions that we have described in \S\ref{affinechart}. For part \ref{mt4} note that 
$\frac{\partial X_i}{\partial \tau_i}$ is just $dX_i(\Ra_k)$ and the computation of this is explained in \S\ref{JH2019}. In \S\ref{JH2019} we have explained how to write $\Ra_k$'s in the affine chart given by $X_i$'s. Since $\Ra_k$ is meromorphic in $t_5$, the vector field in $X_i$ variables has a pole order one in $X_5=T_{20}=t_5^2$.

\section{Compatibility with Resnikoff's computation}
In this section, we compare our construction of differential equations of Siegel modular forms  with few differential equations obtained by  Resnikoff in \cite{Resnikoff1970-1, Resnikoff1970-2}. 
Let
$$
\partial:=\det 
\begin{bmatrix}
\Ra_1 & \frac{1}{2} \Ra_3 \\
\frac{1}{2} \Ra_3 & \Ra_2
\end{bmatrix}=\Ra_1\circ\Ra_2-\frac{1}{4}\Ra_3\circ \Ra_3,
$$
where we consider $\Ra_i:{\mathcal O}_\Tf\to{\mathcal O}_\Tf$ as derivations. 
For $f\in{\mathcal O}_\Sf$ of degree $4w$ (of weight $w$ if $f$ is interpreted as Siegel modular forms) let also 
\begin{equation}
\begin{split}
D^{\rm Res}f = \frac{4w-1}{4w^2} f\partial f+ \frac{1-2w}{8w^2} \partial f^2.
\end{split}
\end{equation}
It turns out that $D^{\rm Res}$ maps ${\mathcal O}_\Sf$ to the set of cuspidal forms in ${\mathcal O}_\Sf$. It seems to be interesting to prove this statement in our geometric framework.
For 
$$
\widehat{E}_2 = A/\delta^2, \widehat{E}_4 = B/\delta^4, \widehat{E}_6 =(4AB-3C)/\delta^6, \widehat{\chi}_{10}  = D/\delta^{10}, \widehat{\chi}_{12} = AD/\delta^{12}.
$$ 
we have verified the following equalities which confirms the above statement:
\begin{eqnarray}
D^{\rm Res} \widehat{E}_4 &=& \frac{984375}{1024} \widehat{\chi}_{10}; \\
D^{\rm Res} \widehat{E}_6 &=& \frac{2165625}{64}\widehat{E}_4\widehat{\chi}_{10}; \\
D^{\rm Res} \widehat{\chi}_{10} &=& -\frac{3}{6400}\widehat{\chi}_{10}\widehat{\chi}_{12}; \\
D^{\rm Res} \widehat{\chi}_{12} &=& -\frac{49}{2304}\widehat{E}_6\widehat{\chi}_{10}^2+\frac{37}{2304}\widehat{E}_4\widehat{\chi}_{10}\widehat{\chi}_{12}.\\
D^{\rm Res} \widehat{E}_2  &=& \frac{3}{256} \widehat{E}_2^3 -\frac{1}{16}\widehat{E}_2\widehat{E}_4 - 
\frac{1}{8}\widehat{E}_6.
\end{eqnarray}
Except for the last one,  these are also obtained in  \cite[Theorem 1]{Resnikoff1970-1}).
Note that up to multiplication with a constant $\widehat{E}_4$ (resp. $\widehat{E}_6, \widehat{\chi}_{10}, \widehat{\chi}_{12}$) is equal to $E_4$ (resp. $E_6, \chi_{10}, \chi_{12}$) and this is the reason why the constants in the above equalities are different from Resnikoff's constants.

\section{Final comments}
The vector fields $R_k$'s together with the vector fields coming from the action of $\BG$ on $\Tf$
provide natural foliations for  Humbert surfaces in the moduli of principally polarized abelian varieties.
The general theory is being formulated in \cite{ho2020} and explicit computations of $R_k$'s in the present text 
can be used for computing explicit equations for such surfaces, see \cite{GMCD-K3}. 
In \cite{igusa67}, Igusa defined the map from the ring of Siegel modular forms to the ring of invariants via the theta functions. It seems also reasonable to use theta functions to understand the differential Siegel modular forms in our sense. For this we must rewrite the content of this paper for the family $y^2=\prod_{i=1}^6(x-t_i)$.
In our geometric setting, we have not written  (differential) Siegel modular forms as Poincar\'e series and  
one might ask for their  Fourier expansions. This can be done using the explicit 
expression of $R_k$'s, which turns out to be recursion in the coefficients of such Fourier expansions. For 
such a computation in the case of Calabi-Yau modular forms, one may find in \cite[Chapter 5]{GMCD-MQCY3}.

\appendix 
\section{Our computer code}
\label{03o2019}
For the convenience of the reader we reproduce here our
computer code in order to compute the Gauss-Manin connection matrices $B_i, \ i=2,3,4,5$ and the vector fields $R_k, \ k=1,2,3$. The library {\tt foliation.lib} of {\sc Singular} can be downloaded from the second author's webpage.  
\begin{tiny}
\begin{verbatim}
LIB "foliation.lib"; LIB "latex.lib";
ring r=(0,t(2..5)),(x,y),wp(2,5);
poly f=y^2-x^5-t(2)*x^3-t(3)*x^2-t(4)*x-t(5);
list GMl=gaussmaninmatrix(f,list(t(2..5)), 2);
matrix S[4][4]=0,0,0,1,0,0,1,0,0,1,0,0,1,0,0,0; print(S);
     for (int i=2; i<=5;i=i+1){ GMl[i]=S*GMl[i]*inverse(S);}
list B=0; for (int i=1; i<=4;i=i+1){ B=insert(B, GMl[1]*GMl[i+1], size(B));}
poly Delta=1/GMl[1];
//-----B is the list of Gauss-Manin connection matrices-----------
matrix Om[4][4]= 0,0,0,4/3,0,0,4,0,0,-4,0,(4/3)*t(2),-4/3,0, 
-(4/3)*t(2), 0;   print(Om);
//-----Om is the cup product pairing in de Rham cohomology
//-----Checking the differential equation of Om--------------
for (int i=2; i<=5;i=i+1){ print(B[i]*Om+Om*transpose(B[i]));}
//-------A check for the structure of M matrix----------
matrix M[4][4]=B[2][1,3],B[3][1,3], B[4][1,3], B[5][1,3], 
B[2][1,4],B[3][1,4], B[4][1,4], B[5][1,4], B[2][2,3],
B[3][2,3], B[4][2,3], B[5][2,3], B[2][2,4], B[3][2,4], B[4][2,4], B[5][2,4];
matrix kn[1][4]=-3,0,0,1; print(kn*M);
//-------Computing R_k's in t_i's---------------------------
ring r2=(0,t(2..5), s(-2), s(-4),s(-6),s(1..4)(1..2)),(x(2..5)),dp;
matrix M=imap(r, M);
matrix X[4][1]=x(2..5);  matrix S[4][1]=s(-4),s(-6),s(-2), 3*s(-4);
ideal I=M*X-S; I=std(I); int i; list Ra=0;
for (i=2; i<=5;i=i+1){ Ra=insert(Ra, reduce(x(i), I),size(Ra));}
for (i=2; i<=5;i=i+1){Ra[i]=subst(Ra[i],x(5),0); }
list B=imap(r, B);
matrix E=Ra[2]*B[2]+Ra[3]*B[3]+Ra[4]*B[4]+Ra[5]*B[5]; //-Computing E-2, E-4, E-6--
matrix S1[2][2]=s(1)(1),s(1)(2),s(2)(1),s(2)(2);
matrix S3[2][2]=s(3)(1),s(3)(2),s(4)(1),s(4)(2);
poly delta= det(S1);
matrix Om2[2][2]= 0,4/3,4,0;
matrix S4 =inverse(transpose(S1))*inverse(transpose(Om2));
//---------Computing R_1, R_2, R_3--------------------------
list Cl; matrix C[2][2]=1,0,0,0; Cl=C;
C=0,0,0,1; Cl=insert(Cl, C, size(Cl)); 
C=0,1,1,0; Cl=insert(Cl, C, size(Cl));
list s4l=-1/4*s(2)(1)*s(2)(2)/(delta^2), -1/4*s(1)(1)*s(1)(2)/(delta^2),  
                                   1/4*(s(1)(2)*s(2)(1)+s(1)(1)*s(2)(2))/(delta^2);
list s6l=3/4*s(2)(2)^2/(delta^2),        3/4*s(1)(2)^2/(delta^2),         -3/2*s(1)(2)*s(2)(2)/(delta^2);
list s2l=1/4*s(2)(1)^2/(delta^2),        1/4*s(1)(1)^2/(delta^2),         -1/2*s(1)(1)*s(2)(1)/(delta^2);
matrix Eh=E; int k;  matrix E1[2][2]; matrix E3[2][2]; matrix R[4][2]; list Ral; list Rh; poly komak; 
for (k=1; k<=3;k=k+1)
     { 
     Rh=Ra[2],Ra[3],Ra[4],Ra[5];
     for (i=1; i<=size(Rh);i=i+1)
         {
         komak=Rh[i]; 
         komak=substpar(komak, s(-2), s2l[k]); komak=substpar(komak, s(-4), s4l[k]); 
         komak=substpar(komak, s(-6), s6l[k]);
         Rh[i]=komak; 
         }
     Eh=substpar(E, s(-2), s2l[k]); Eh=substpar(Eh, s(-4), s4l[k]); Eh=substpar(Eh, s(-6), s6l[k]);
     E1=submat(Eh, 1..2, 1..2); E3=submat(Eh, 3..4, 1..2); 
     R=Cl[k]*S3-S1*E1, -S3*E1-S4*E3;
     Rh=Rh+list(R[1,1], R[1,2], R[2,1], R[2,2], R[3,1], R[3,2], R[4,1], R[4,2]); 
     Ral=insert(Ral, Rh, size(Ral)); 
     }
//--------------Tangency to Delta=0 for this we can use s(-4),s(-6), s(-2)------------
poly Delta=imap(r, Delta);
(diffpar(Delta,t(2))*Ra[2]+diffpar(Delta,t(3))*Ra[3]+diffpar(Delta,t(4))*Ra[4]+diffpar(Delta,t(5))*Ra[5])/Delta;     
//-------------------Three proceedure----------------------------
proc dvpar(poly P, list parl, list vfl)
{
poly Q; for (int i=1; i<=size(parl);i=i+1){Q=Q+diffpar(P, parl[i])*vfl[i];} return(Q); 
}
proc Resnikoffd(poly P)
{
poly Q= dvpar( dvpar(P, lp, Ral[1]), lp, Ral[2])-(1/4)*dvpar( dvpar(P, lp, Ral[3]), lp, Ral[3]); return(Q); 
} 
proc ResnikoffD(poly P, int w)
{
poly Q= (8*w-2)*P*Resnikoffd(P)-(2*w-1)*Resnikoffd(P^2); Q=Q/(8*w^2); return(Q); 
}
//------verfying that F=0 is invariant-----------------
poly F=s(4)(2)*s(2)(1)-s(4)(1)*s(2)(2)+s(3)(2)*s(1)(1)-s(3)(1)*s(1)(2)-t(2)/4; 
list lp=t(2..5), s(1)(1), s(1)(2), s(2)(1), s(2)(2), s(3)(1), s(3)(2), s(4)(1), s(4)(2);
for (k=1; k<=3;k=k+1)
        {
        delta^2*25*t(5)*dvpar(F, lp, Ral[k])/F;     
        }
//----------------Verifying Resnikoff's differential equation-----------
poly Ai=(-3*t(2)^2-20*t(4))/delta^2;
poly Bi= (-3*t(2)*t(3)^2+9*t(2)^2*t(4)-20*t(4)^2+75*t(3)*t(5))/(delta^4); 
poly Ci=(12*t(2)^3*t(3)^2+18*t(3)^4-36*t(2)^4*t(4)-13*t(2)*t(3)^2*t(4)-88*t(2)^2*t(4)^2+160*t(4)^3-
165*t(2)^2*t(3)*t(5)-800*t(3)*t(4)*t(5)-1125*t(2)*t(5)^2)/(delta^6);
poly Di=Delta/delta^10;
poly Ei=
(729*t(2)^10*t(5)^2-486*t(2)^9*t(3)*t(4)*t(5)+108*t(2)^8*t(3)^3*t(5)+81*t(2)^8*t(3)^2*t(4)^2
-18225*t(2)^8*t(4)*t(5)^2-36*t(2)^7*t(3)^4*t(4)+12150*t(2)^7*t(3)^2*t(5)^2+10800*t(2)^7*t(3)*t(4)^2*t(5)
+4*t(2)^6*t(3)^6-9720*t(2)^6*t(3)^3*t(4)*t(5)-1800*t(2)^6*t(3)^2*t(4)^3+135000*t(2)^6*t(4)^2*t(5)^2
+1584*t(2)^5*t(3)^5*t(5)+2015*t(2)^5*t(3)^4*t(4)^2-175500*t(2)^5*t(3)^2*t(4)*t(5)^2-60000*t(2)^5*t(3)*t(4)^3*t(5)
+928125*t(2)^5*t(5)^4-623*t(2)^4*t(3)^6*t(4)+60000*t(2)^4*t(3)^4*t(5)^2+92500*t(2)^4*t(3)^3*t(4)^2*t(5)
+10000*t(2)^4*t(3)^2*t(4)^4-1012500*t(2)^4*t(3)*t(4)*t(5)^3-250000*t(2)^4*t(4)^3*t(5)^2+59*t(2)^3*t(3)^8
-45050*t(2)^3*t(3)^5*t(4)*t(5)-17500*t(2)^3*t(3)^4*t(4)^3+225000*t(2)^3*t(3)^3*t(5)^3
+850000*t(2)^3*t(3)^2*t(4)^2*t(5)^2-2812500*t(2)^3*t(4)*t(5)^4+5700*t(2)^2*t(3)^7*t(5)+10825*t(2)^2*t(3)^6*t(4)^2
-478125*t(2)^2*t(3)^4*t(4)*t(5)^2-50000*t(2)^2*t(3)^3*t(4)^3*t(5)+1875000*t(2)^2*t(3)^2*t(5)^4
+1250000*t(2)^2*t(3)*t(4)^2*t(5)^3-2610*t(2)*t(3)^8*t(4)+93750*t(2)*t(3)^6*t(5)^2+32500*t(2)*t(3)^5*t(4)^2*t(5)
-1187500*t(2)*t(3)^3*t(4)*t(5)^3+216*t(3)^10-9000*t(3)^7*t(4)*t(5)+625*t(3)^6*t(4)^3+175000*t(3)^5*t(5)^3
+15625*t(3)^4*t(4)^2*t(5)^2-390625*t(3)^2*t(4)*t(5)^4-9765625*t(5)^6)/delta^15;

poly DBi=ResnikoffD(Bi,4);
substpar(DBi/Di,t(2), 4*F+t(2));
poly DAD=ResnikoffD(Di,10);
substpar(DAD/(Di*Ai*Di), t(2), 4*F+t(2));
poly P=ResnikoffD(Ai*Bi+(-3/4)*Ci,6);
poly X1=substpar(P/(Bi*Di),t(2), 4*F+t(2));

poly DK12=ResnikoffD(Ai*Di,12);
poly X1=substpar(DK12/(Di)^2,t(2), 4*F+t(2));
X1=X1*delta^6;
ring r3=(0,t(2..5)), (s(1..4)(1..2)), dp;
ideal I=(-t(2)+4*s(1)(1)*s(3)(2)-4*s(1)(2)*s(3)(1)+4*s(2)(1)*s(4)(2)-4*s(2)(2)*s(4)(1))/4;
I=std(I);  poly X1=imap(r2, X1); reduce(X1, I); 
\end{verbatim}
\end{tiny}

\section{Three modular vector fields}
In this appendix, we reproduce our computation of modular vector fields $\Ra_i,\ i=1,2,3$ corresponding to classical derivations $\frac{\partial}{\partial \tau_i}$ in the Siegel domain for $g=2$. 
They are written in the weighted  homogeneous coordinate system 
$t_2,t_3,t_4,t_5$,  $s_{11}, s_{12}, s_{21},$ $s_{22}, s_{31}, s_{32}, s_{41}, s_{42}$
with $\delta:=s_{11}s_{22}-s_{12}s_{21}$. 
\begin{eqnarray*}
&& \Ra_1= \\
&& [(4t_2^2t_4s_{21}^2+30t_2^2t_5s_{21}s_{22}-6t_2t_3t_4s_{21}s_{22}-20t_2t_3t_5s_{22}^2+8t_2t_4^2s_{22}^2+75t_3t_5s_{21}^2 \\
&& -100t_4t_5s_{21}s_{22}+125t_5^2s_{22}^2)/(50t_5\delta^2)]\partial/\partial t_2 \\
&&+ [(-30t_2^2t_5s_{21}^2+6t_2t_3t_4s_{21}^2+65t_2t_3t_5s_{21}s_{22}-10t_2t_4t_5s_{22}^2-9t_3^2t_4s_{21}s_{22}\\
&&-30t_3^2t_5s_{22}^2+12t_3t_4^2s_{22}^2+100t_4t_5s_{21}^2-125t_5^2s_{21}s_{22})/(50t_5\delta^2)]\partial/\partial t_3 \\
&&+ [(-20t_2t_3t_5s_{21}^2+8t_2t_4^2s_{21}^2+10t_2t_4t_5s_{21}s_{22}+75t_2t_5^2s_{22}^2+30t_3^2t_5s_{21}s_{22} \\
&&-12t_3t_4^2s_{21}s_{22}-55t_3t_4t_5s_{22}^2+16t_4^3s_{22}^2+125t_5^2s_{21}^2)/(50t_5\delta^2)]\partial/\partial t_4 \\
&&+[0] \partial/\partial t_5 \\
&&+[(6t_2t_4s_{11}s_{21}^2+20t_2t_5s_{11}s_{21}s_{22}-20t_2t_5s_{12}s_{21}^2-9t_3t_4s_{11}s_{21}s_{22}-30t_3t_5s_{11}s_{22}^2\\
&&
+30t_3t_5s_{12}s_{21}s_{22}+12t_4^2s_{11}s_{22}^2-15t_4t_5s_{12}s_{22}^2 +100t_5s_{11}^2s_{22}^2s_{31} \\
&&-200t_5s_{11}s_{12}s_{21}s_{22}s_{31}+100t_5s_{12}^2s_{21}^2s_{31})/(100t_5\delta^2)]\partial/\partial s_{11} \\
&&+ [(2t_2t_4s_{12}s_{21}^2-25t_2t_5s_{11}s_{22}^2+40t_2t_5s_{12}s_{21}s_{22}-3t_3t_4s_{12}s_{21}s_{22}-10t_3t_5s_{12}s_{22}^2\\
&&+4t_4^2s_{12}s_{22}^2+100t_5s_{11}^2s_{22}^2s_{32}-200t_5s_{11}s_{12}s_{21}s_{22}s_{32}+25t_5s_{11}s_{21}^2\\
&&+100t_5s_{12}^2s_{21}^2s_{32})/(100t_5\delta^2)]\partial/\partial s_{12} \\
&&+[(6t_2t_4s_{21}^3-9t_3t_4s_{21}^2s_{22}+12t_4^2s_{21}s_{22}^2-15t_4t_5s_{22}^3)/(100t_5\delta^2)]\partial/\partial s_{21}\\
&&+[(2t_2t_4s_{21}^2s_{22}+15t_2t_5s_{21}s_{22}^2-3t_3t_4s_{21}s_{22}^2-10t_3t_5s_{22}^3+4t_4^2s_{22}^3\\
&&+25t_5s_{21}^3)/(100t_5\delta^2)]\partial/\partial s_{22}\\
&&+[(6t_2t_4s_{11}s_{21}^2s_{22}s_{31}-6t_2t_4s_{12}s_{21}^3s_{31}-20t_2t_5s_{11}s_{21}^2s_{22}s_{32} +20t_2t_5s_{11}s_{21}s_{22}^2s_{31}\\ 
&&+20t_2t_5s_{12}s_{21}^3s_{32}-20t_2t_5s_{12}s_{21}^2s_{22}s_{31}-9t_3t_4s_{11}s_{21}s_{22}^2s_{31}+9t_3t_4s_{12}s_{21}^2s_{22}s_{31} \\
&&+30t_3t_5s_{11}s_{21}s_{22}^2s_{32}-30t_3t_5s_{11}s_{22}^3s_{31}-30t_3t_5s_{12}s_{21}^2s_{22}s_{32}+30t_3t_5s_{12}s_{21}s_{22}^2s_{31} \\
&&+12t_4^2s_{11}s_{22}^3s_{31}-12t_4^2s_{12}s_{21}s_{22}^2s_{31}-15t_4t_5s_{11}s_{22}^3s_{32}+15t_4t_5s_{12}s_{21}s_{22}^2s_{32} \\
&&+25t_4t_5s_{21}^2s_{22}-50t_5^2s_{21}s_{22}^2)/(100t_5\delta^3)] \partial/\partial s_{31} \\
&&+[(2t_2t_4s_{11}s_{21}^2s_{22}s_{32}-2t_2t_4s_{12}s_{21}^3s_{32}+40t_2t_5s_{11}s_{21}s_{22}^2s_{32}-25t_2t_5s_{11}s_{22}^3s_{31} \\
&&-40t_2t_5s_{12}s_{21}^2s_{22}s_{32}+25t_2t_5s_{12}s_{21}s_{22}^2s_{31}+10t_2t_5s_{21}^3-3t_3t_4s_{11}s_{21}s_{22}^2s_{32} \\
&&+3t_3t_4s_{12}s_{21}^2s_{22}s_{32}-10t_3t_5s_{11}s_{22}^3s_{32}+10t_3t_5s_{12}s_{21}s_{22}^2s_{32}+35t_3t_5s_{21}^2s_{22} \\
&&+4t_4^2s_{11}s_{22}^3s_{32}-4t_4^2s_{12}s_{21}s_{22}^2s_{32}-55t_4t_5s_{21}s_{22}^2+75t_5^2s_{22}^3+25t_5s_{11}s_{21}^2s_{22}s_{31}\\
&&-25t_5s_{12}s_{21}^3s_{31})/(100t_5\delta^3)]\partial/\partial s_{32} \\
&&+[(24t_2t_4s_{11}s_{21}^2s_{22}s_{41}-24t_2t_4s_{12}s_{21}^3s_{41}-80t_2t_5s_{11}s_{21}^2s_{22}s_{42}+80t_2t_5s_{11}s_{21}s_{22}^2s_{41} \\
&&+80t_2t_5s_{12}s_{21}^3s_{42}-80t_2t_5s_{12}s_{21}^2s_{22}s_{41}-36t_3t_4s_{11}s_{21}s_{22}^2s_{41}+36t_3t_4s_{12}s_{21}^2s_{22}s_{41} \\
&&+120t_3t_5s_{11}s_{21}s_{22}^2s_{42}-120t_3t_5s_{11}s_{22}^3s_{41}-120t_3t_5s_{12}s_{21}^2s_{22}s_{42}+120t_3t_5s_{12}s_{21}s_{22}^2s_{41}\\
&&+48t_4^2s_{11}s_{22}^3s_{41}-48t_4^2s_{12}s_{21}s_{22}^2s_{41}-25t_4t_5s_{11}s_{21}s_{22}-60t_4t_5s_{11}s_{22}^3s_{42}-75t_4t_5s_{12}s_{21}^2\\
&&+60t_4t_5s_{12}s_{21}s_{22}^2s_{42}+50t_5^2s_{11}s_{22}^2+150t_5^2s_{12}s_{21}s_{22})/(400t_5\delta^3)] \partial/\partial s_{41} \\
&&+[(8t_2t_4s_{11}s_{21}^2s_{22}s_{42}-8t_2t_4s_{12}s_{21}^3s_{42}-40t_2t_5s_{11}s_{21}^2+160t_2t_5s_{11}s_{21}s_{22}^2s_{42} \\
&&-100t_2t_5s_{11}s_{22}^3s_{41}-160t_2t_5s_{12}s_{21}^2s_{22}s_{42}+100t_2t_5s_{12}s_{21}s_{22}^2s_{41}-12t_3t_4s_{11}s_{21}s_{22}^2s_{42} \\
&&+12t_3t_4s_{12}s_{21}^2s_{22}s_{42}+10t_3t_5s_{11}s_{21}s_{22}-40t_3t_5s_{11}s_{22}^3s_{42}-150t_3t_5s_{12}s_{21}^2\\
&&+40t_3t_5s_{12}s_{21}s_{22}^2s_{42}+16t_4^2s_{11}s_{22}^3s_{42}-16t_4^2s_{12}s_{21}s_{22}^2s_{42}-5t_4t_5s_{11}s_{22}^2+225t_4t_5s_{12}s_{21}s_{22}\\
&&-300t_5^2s_{12}s_{22}^2+100t_5s_{11}s_{21}^2s_{22}s_{41}-100t_5s_{12}s_{21}^3s_{41})/(400t_5\delta^3)]\partial/\partial s_{42}
\end{eqnarray*}

\begin{eqnarray*}
&& \Ra_2= \\
&& [(4t_2^2t_4s_{11}^2+30t_2^2t_5s_{11}s_{12}-6t_2t_3t_4s_{11}s_{12}-20t_2t_3t_5s_{12}^2+8t_2t_4^2s_{12}^2+75t_3t_5s_{11}^2 \\
&&-100t_4t_5s_{11}s_{12}+125t_5^2s_{12}^2)/(50t_5\delta^2)]\partial/\partial t_2 \\
&&+[(-30t_2^2t_5s_{11}^2+6t_2t_3t_4s_{11}^2+65t_2t_3t_5s_{11}s_{12}-10t_2t_4t_5s_{12}^2-9t_3^2t_4s_{11}s_{12}-30t_3^2t_5s_{12}^2\\&&+12t_3t_4^2s_{12}^2
+100t_4t_5s_{11}^2-125t_5^2s_{11}s_{12})/(50t_5\delta^2)] \partial/\partial t_3\\
&&+[(-20t_2t_3t_5s_{11}^2+8t_2t_4^2s_{11}^2+10t_2t_4t_5s_{11}s_{12}+75t_2t_5^2s_{12}^2+30t_3^2t_5s_{11}s_{12}-12t_3t_4^2s_{11}s_{12}\\
&&-55t_3t_4t_5s_{12}^2+16t_4^3s_{12}^2+125t_5^2s_{11}^2)/(50t_5\delta^2)] \partial/\partial t_4\\
&&+[0]\partial/\partial t_5 \\
&&+[(6t_2t_4s_{11}^3-9t_3t_4s_{11}^2s_{12}+12t_4^2s_{11}s_{12}^2-15t_4t_5s_{12}^3)/(100t_5\delta^2)] \partial/\partial s_{11}\\
&&+[(2t_2t_4s_{11}^2s_{12}+15t_2t_5s_{11}s_{12}^2-3t_3t_4s_{11}s_{12}^2-10t_3t_5s_{12}^3+4t_4^2s_{12}^3\\
&&+25t_5s_{11}^3)/(100t_5\delta^2)] \partial/\partial s_{12}\\
&&+[(6t_2t_4s_{11}^2s_{21}-20t_2t_5s_{11}^2s_{22}+20t_2t_5s_{11}s_{12}s_{21}-9t_3t_4s_{11}s_{12}s_{21}+30t_3t_5s_{11}s_{12}s_{22}\\
&&-30t_3t_5s_{12}^2s_{21}+12t_4^2s_{12}^2s_{21}-15t_4t_5s_{12}^2s_{22}+100t_5s_{11}^2s_{22}^2s_{41}-200t_5s_{11}s_{12}s_{21}s_{22}s_{41}\\
&&+100t_5s_{12}^2s_{21}^2s_{41})/(100t_5\delta^2)] \partial/\partial s_{21}\\
&&+[(2t_2t_4s_{11}^2s_{22}+40t_2t_5s_{11}s_{12}s_{22}-25t_2t_5s_{12}^2s_{21}-3t_3t_4s_{11}s_{12}s_{22}-10t_3t_5s_{12}^2s_{22}\\
&&+4t_4^2s_{12}^2s_{22}+25t_5s_{11}^2s_{21}+100t_5s_{11}^2s_{22}^2s_{42}-200t_5s_{11}s_{12}s_{21}s_{22}s_{42}+100t_5s_{12}^2s_{21}^2s_{42})/\\
&&(100t_5\delta^2)] \partial/\partial s_{22}\\
&&+[(24t_2t_4s_{11}^3s_{22}s_{31}-24t_2t_4s_{11}^2s_{12}s_{21}s_{31}-80t_2t_5s_{11}^3s_{22}s_{32}+80t_2t_5s_{11}^2s_{12}s_{21}s_{32}\\
&&+80t_2t_5s_{11}^2s_{12}s_{22}s_{31}-80t_2t_5s_{11}s_{12}^2s_{21}s_{31}-36t_3t_4s_{11}^2s_{12}s_{22}s_{31}+36t_3t_4s_{11}s_{12}^2s_{21}s_{31}\\
&&+120t_3t_5s_{11}^2s_{12}s_{22}s_{32}-120t_3t_5s_{11}s_{12}^2s_{21}s_{32}-120t_3t_5s_{11}s_{12}^2s_{22}s_{31}+120t_3t_5s_{12}^3s_{21}s_{31}\\
&&+48t_4^2s_{11}s_{12}^2s_{22}s_{31}-48t_4^2s_{12}^3s_{21}s_{31}+75t_4t_5s_{11}^2s_{22}-60t_4t_5s_{11}s_{12}^2s_{22}s_{32}+25t_4t_5s_{11}s_{12}s_{21}\\
&&+60t_4t_5s_{12}^3s_{21}s_{32}-150t_5^2s_{11}s_{12}s_{22}-50t_5^2s_{12}^2s_{21})/(400t_5\delta^3)] \partial/\partial s_{31}\\
&&+[(8t_2t_4s_{11}^3s_{22}s_{32}-8t_2t_4s_{11}^2s_{12}s_{21}s_{32}+160t_2t_5s_{11}^2s_{12}s_{22}s_{32}+40t_2t_5s_{11}^2s_{21} \\
&&-160t_2t_5s_{11}s_{12}^2s_{21}s_{32}-100t_2t_5s_{11}s_{12}^2s_{22}s_{31}+100t_2t_5s_{12}^3s_{21}s_{31}-12t_3t_4s_{11}^2s_{12}s_{22}s_{32}\\
&&+12t_3t_4s_{11}s_{12}^2s_{21}s_{32}+150t_3t_5s_{11}^2s_{22}-40t_3t_5s_{11}s_{12}^2s_{22}s_{32}-10t_3t_5s_{11}s_{12}s_{21}+40t_3t_5s_{12}^3s_{21}s_{32}\\
&&+16t_4^2s_{11}s_{12}^2s_{22}s_{32}-16t_4^2s_{12}^3s_{21}s_{32}-225t_4t_5s_{11}s_{12}s_{22}+5t_4t_5s_{12}^2s_{21}+300t_5^2s_{12}^2s_{22}\\
&&+100t_5s_{11}^3s_{22}s_{31}-100t_5s_{11}^2s_{12}s_{21}s_{31})/(400t_5\delta^3]\partial/\partial s_{32} \\
&&+[(6t_2t_4s_{11}^3s_{22}s_{41}-6t_2t_4s_{11}^2s_{12}s_{21}s_{41}-20t_2t_5s_{11}^3s_{22}s_{42}+20t_2t_5s_{11}^2s_{12}s_{21}s_{42}\\
&&+20t_2t_5s_{11}^2s_{12}s_{22}s_{41}-20t_2t_5s_{11}s_{12}^2s_{21}s_{41}-9t_3t_4s_{11}^2s_{12}s_{22}s_{41}+9t_3t_4s_{11}s_{12}^2s_{21}s_{41}\\
&&+30t_3t_5s_{11}^2s_{12}s_{22}s_{42}-30t_3t_5s_{11}s_{12}^2s_{21}s_{42}-30t_3t_5s_{11}s_{12}^2s_{22}s_{41}+30t_3t_5s_{12}^3s_{21}s_{41}\\
&&+12t_4^2s_{11}s_{12}^2s_{22}s_{41}-12t_4^2s_{12}^3s_{21}s_{41}-25t_4t_5s_{11}^2s_{12}-15t_4t_5s_{11}s_{12}^2s_{22}s_{42}+15t_4t_5s_{12}^3s_{21}s_{42}\\
&&+50t_5^2s_{11}s_{12}^2)/(100t_5\delta^3)] \partial/\partial s_{41}\\
&&+[(2t_2t_4s_{11}^3s_{22}s_{42}-2t_2t_4s_{11}^2s_{12}s_{21}s_{42}-10t_2t_5s_{11}^3+40t_2t_5s_{11}^2s_{12}s_{22}s_{42}-40t_2t_5s_{11}s_{12}^2s_{21}s_{42}\\
&&-25t_2t_5s_{11}s_{12}^2s_{22}s_{41}+25t_2t_5s_{12}^3s_{21}s_{41}-3t_3t_4s_{11}^2s_{12}s_{22}s_{42}+3t_3t_4s_{11}s_{12}^2s_{21}s_{42}-35t_3t_5s_{11}^2s_{12}\\
&&-10t_3t_5s_{11}s_{12}^2s_{22}s_{42}+10t_3t_5s_{12}^3s_{21}s_{42}+4t_4^2s_{11}s_{12}^2s_{22}s_{42}-4t_4^2s_{12}^3s_{21}s_{42}+55t_4t_5s_{11}s_{12}^2\\
&&-75t_5^2s_{12}^3+25t_5s_{11}^3s_{22}s_{41}-25t_5s_{11}^2s_{12}s_{21}s_{41})/(100t_5\delta^3)] \partial/\partial s_{42}
\end{eqnarray*}

\begin{eqnarray*}
&& \Ra_3=\\
&& [(-4t_2^2t_4s_{11}s_{21}-15t_2^2t_5s_{11}s_{22}-15t_2^2t_5s_{12}s_{21}+3t_2t_3t_4s_{11}s_{22}+3t_2t_3t_4s_{12}s_{21} \\
&&+20t_2t_3t_5s_{12}s_{22}-8t_2t_4^2s_{12}s_{22}-75t_3t_5s_{11}s_{21}+50t_4t_5s_{11}s_{22}+50t_4t_5s_{12}s_{21}\\
&&-125t_5^2s_{12}s_{22})/(25t_5\delta^2)] \partial/\partial t_2\\
&&+[(60t_2^2t_5s_{11}s_{21}-12t_2t_3t_4s_{11}s_{21}-65t_2t_3t_5s_{11}s_{22}-65t_2t_3t_5s_{12}s_{21}+20t_2t_4t_5s_{12}s_{22}\\
&&+9t_3^2t_4s_{11}s_{22}+9t_3^2t_4s_{12}s_{21}+60t_3^2t_5s_{12}s_{22}-24t_3t_4^2s_{12}s_{22}-200t_4t_5s_{11}s_{21}+125t_5^2s_{11}s_{22}\\
&&+125t_5^2s_{12}s_{21})/(50t_5\delta^2)] \partial/\partial t_3 \\
&&+[(20t_2t_3t_5s_{11}s_{21}-8t_2t_4^2s_{11}s_{21}-5t_2t_4t_5s_{11}s_{22}-5t_2t_4t_5s_{12}s_{21}-75t_2t_5^2s_{12}s_{22} \\
&&-15t_3^2t_5s_{11}s_{22}-15t_3^2t_5s_{12}s_{21}+6t_3t_4^2s_{11}s_{22}+6t_3t_4^2s_{12}s_{21}+55t_3t_4t_5s_{12}s_{22}-16t_4^3s_{12}s_{22}\\
&&-125t_5^2s_{11}s_{21})/(25t_5\delta^2)] \partial/\partial t_4 \\
&&+[0]\partial/\partial t_5 \\
&&+[(-12t_2t_4s_{11}^2s_{21}-20t_2t_5s_{11}^2s_{22}+20t_2t_5s_{11}s_{12}s_{21}+9t_3t_4s_{11}^2s_{22}+9t_3t_4s_{11}s_{12}s_{21}\\
&&+30t_3t_5s_{11}s_{12}s_{22}-30t_3t_5s_{12}^2s_{21}-24t_4^2s_{11}s_{12}s_{22}+30t_4t_5s_{12}^2s_{22}+100t_5s_{11}^2s_{22}^2s_{41}\\
&&-200t_5s_{11}s_{12}s_{21}s_{22}s_{41}+100t_5s_{12}^2s_{21}^2s_{41})/(100t_5\delta^2)]\partial/\partial s_{11} \\
&&+[(-4t_2t_4s_{11}s_{12}s_{21}+10t_2t_5s_{11}s_{12}s_{22}-40t_2t_5s_{12}^2s_{21}+3t_3t_4s_{11}s_{12}s_{22}+3t_3t_4s_{12}^2s_{21}\\
&&+20t_3t_5s_{12}^2s_{22}-8t_4^2s_{12}^2s_{22}-50t_5s_{11}^2s_{21}+100t_5s_{11}^2s_{22}^2s_{42}-200t_5s_{11}s_{12}s_{21}s_{22}s_{42}\\
&&+100t_5s_{12}^2s_{21}^2s_{42})/
(100t_5\delta^2)]\partial/\partial s_{12} \\
&&+[(-12t_2t_4s_{11}s_{21}^2+20t_2t_5s_{11}s_{21}s_{22}-20t_2t_5s_{12}s_{21}^2+9t_3t_4s_{11}s_{21}s_{22}+9t_3t_4s_{12}s_{21}^2\\
&&-30t_3t_5s_{11}s_{22}^2+30t_3t_5s_{12}s_{21}s_{22}-24t_4^2s_{12}s_{21}s_{22}+30t_4t_5s_{12}s_{22}^2+100t_5s_{11}^2s_{22}^2s_{31}\\
&&-200t_5s_{11}s_{12}s_{21}s_{22}s_{31}+100t_5s_{12}^2s_{21}^2s_{31})/(100t_5\delta^2)]\partial/\partial s_{21} \\
&&+[(-4t_2t_4s_{11}s_{21}s_{22}-40t_2t_5s_{11}s_{22}^2+10t_2t_5s_{12}s_{21}s_{22}+3t_3t_4s_{11}s_{22}^2+3t_3t_4s_{12}s_{21}s_{22}\\
&&+20t_3t_5s_{12}s_{22}^2-8t_4^2s_{12}s_{22}^2+100t_5s_{11}^2s_{22}^2s_{32}-200t_5s_{11}s_{12}s_{21}s_{22}s_{32}-50t_5s_{11}s_{21}^2\\
&&+100t_5s_{12}^2s_{21}^2s_{32})/(100t_5\delta^2)] \partial/\partial s_{22} \\
&&+[(-48t_2t_4s_{11}^2s_{21}s_{22}s_{31}+48t_2t_4s_{11}s_{12}s_{21}^2s_{31}+160t_2t_5s_{11}^2s_{21}s_{22}s_{32}-80t_2t_5s_{11}^2s_{22}^2s_{31} \\
&&-160t_2t_5s_{11}s_{12}s_{21}^2s_{32}+80t_2t_5s_{12}^2s_{21}^2s_{31}+36t_3t_4s_{11}^2s_{22}^2s_{31}-36t_3t_4s_{12}^2s_{21}^2s_{31}\\
&&-120t_3t_5s_{11}^2s_{22}^2s_{32}+240t_3t_5s_{11}s_{12}s_{22}^2s_{31}+120t_3t_5s_{12}^2s_{21}^2s_{32}-240t_3t_5s_{12}^2s_{21}s_{22}s_{31}\\
&&-96t_4^2s_{11}s_{12}s_{22}^2s_{31}+96t_4^2s_{12}^2s_{21}s_{22}s_{31}+120t_4t_5s_{11}s_{12}s_{22}^2s_{32}-175t_4t_5s_{11}s_{21}s_{22}\\
&&-120t_4t_5s_{12}^2s_{21}s_{22}s_{32}-25t_4t_5s_{12}s_{21}^2+150t_5^2s_{11}s_{22}^2+250t_5^2s_{12}s_{21}s_{22})/(400t_5\delta^3)] \partial/\partial s_{31} \\
&&+[(-16t_2t_4s_{11}^2s_{21}s_{22}s_{32}+16t_2t_4s_{11}s_{12}s_{21}^2s_{32}-160t_2t_5s_{11}^2s_{22}^2s_{32}+200t_2t_5s_{11}s_{12}s_{22}^2s_{31}\\&&-80t_2t_5s_{11}s_{21}^2+160t_2t_5s_{12}^2s_{21}^2s_{32}-200t_2t_5s_{12}^2s_{21}s_{22}s_{31}+12t_3t_4s_{11}^2s_{22}^2s_{32}-12t_3t_4s_{12}^2s_{21}^2s_{32}\\
&&+80t_3t_5s_{11}s_{12}s_{22}^2s_{32}-290t_3t_5s_{11}s_{21}s_{22}-80t_3t_5s_{12}^2s_{21}s_{22}s_{32}+10t_3t_5s_{12}s_{21}^2\\
&&-32t_4^2s_{11}s_{12}s_{22}^2s_{32}+32t_4^2s_{12}^2s_{21}s_{22}s_{32}+225t_4t_5s_{11}s_{22}^2+215t_4t_5s_{12}s_{21}s_{22}-600t_5^2s_{12}s_{22}^2\\
&&-200t_5s_{11}^2s_{21}s_{22}s_{31}+200t_5s_{11}s_{12}s_{21}^2s_{31})/(400t_5\delta^3)]\partial/\partial s_{32} \\
&&+[(-48t_2t_4s_{11}^2s_{21}s_{22}s_{41}+48t_2t_4s_{11}s_{12}s_{21}^2s_{41}+160t_2t_5s_{11}^2s_{21}s_{22}s_{42}-80t_2t_5s_{11}^2s_{22}^2s_{41}\\
&&-160t_2t_5s_{11}s_{12}s_{21}^2s_{42}+80t_2t_5s_{12}^2s_{21}^2s_{41}+36t_3t_4s_{11}^2s_{22}^2s_{41}-36t_3t_4s_{12}^2s_{21}^2s_{41}\\
&&-120t_3t_5s_{11}^2s_{22}^2s_{42}+240t_3t_5s_{11}s_{12}s_{22}^2s_{41}+120t_3t_5s_{12}^2s_{21}^2s_{42}-240t_3t_5s_{12}^2s_{21}s_{22}s_{41}\\
&&-96t_4^2s_{11}s_{12}s_{22}^2s_{41}+96t_4^2s_{12}^2s_{21}s_{22}s_{41}+25t_4t_5s_{11}^2s_{22}+175t_4t_5s_{11}s_{12}s_{21}\\
&&+120t_4t_5s_{11}s_{12}s_{22}^2s_{42}-120t_4t_5s_{12}^2s_{21}s_{22}s_{42}-250t_5^2s_{11}s_{12}s_{22}-150t_5^2s_{12}^2s_{21})/\\
&&(400t_5\delta^3)]\partial/\partial s_{41} \\
\end{eqnarray*}

\begin{eqnarray*}
&&+[(-16t_2t_4s_{11}^2s_{21}s_{22}s_{42}+16t_2t_4s_{11}s_{12}s_{21}^2s_{42}+80t_2t_5s_{11}^2s_{21}-160t_2t_5s_{11}^2s_{22}^2s_{42}\\
&&+200t_2t_5s_{11}s_{12}s_{22}^2s_{41}+160t_2t_5s_{12}^2s_{21}^2s_{42}-200t_2t_5s_{12}^2s_{21}s_{22}s_{41}+12t_3t_4s_{11}^2s_{22}^2s_{42}\\
&&-12t_3t_4s_{12}^2s_{21}^2s_{42}-10t_3t_5s_{11}^2s_{22}+290t_3t_5s_{11}s_{12}s_{21}+80t_3t_5s_{11}s_{12}s_{22}^2s_{42}\\
&&-80t_3t_5s_{12}^2s_{21}s_{22}s_{42}-32t_4^2s_{11}s_{12}s_{22}^2s_{42}+32t_4^2s_{12}^2s_{21}s_{22}s_{42}-215t_4t_5s_{11}s_{12}s_{22}\\
&&-225t_4t_5s_{12}^2s_{21}+600t_5^2s_{12}^2s_{22}-200t_5s_{11}^2s_{21}s_{22}s_{41}+200t_5s_{11}s_{12}s_{21}^2s_{41})/(400t_5\delta^3)] \partial/\partial s_{42}\\
\end{eqnarray*}

\def\cprime{$'$} \def\cprime{$'$} \def\cprime{$'$} \def\cprime{$'$}




\end{document}